\definecolor{tblue}{RGB}{0,112,192}
\definecolor{fgreen}{RGB}{34,139,34}
\newcommand{\Value}[2]{V_{#1}^{#2}} 
\newcommand{\pik}{\pi^{(k)}}
\newcommand{\pikp}{\pi^{(k+1)}}
\newcommand{\Pt}{\lambda^{(m)}}
\newcommand{\Pk}{Q^{(k)}}
\newcommand{\Ptp}{\lambda^{(m+1)}}
\newcommand{\mb}{\mathbb}
\newcommand{\mc}{\mathcal}
\newcommand{\cl}{\mathrm{cl}}
\newcommand{\tb}{\theta}
\newcommand{\te}{\xi_{0}}
\newcommand{\pib}{\pi_0}
\newcommand{\pie}{\pi}
\newcommand{\indic}[1]{\mathsf{1}_{#1}}
\newcommand{\bias}[2]{H^{#1}_{#2}}
\DeclareMathOperator{\supp}{supp}
\newcommand{\Dc}[2]{\mathsf{D_{mc}}({#1}\|{#2})}
\newcommand{\Lc}[2]{\mathsf{D_{mdp}}({#1}\|{#2})}
\newcommand{\D}[2]{\mathsf{D}({#1}\|{#2})}
\newcommand{\thetahat}{\widehat \theta_{T}}
\newcommand{\qhat}{\widehat\xi_T}
\newcommand{\Vhat}{\widehat V}
\newcommand{\fe}{f_{\pi}}
\newcommand{\fpi}{f_{\pi}}
\newcommand{\fpistar}{f_{\pistar(\qhat)}}
\newcommand{\lsc}{\overline{V}}
\newcommand{\pihat}{\widehat{\pi}}
\newcommand{\pistar}{\pi_\rho}
\DeclareMathOperator*{\argmax}{argmax}
\begin{document}

\title{Towards Optimal Offline Reinforcement Learning}

\author{\name Mengmeng Li \email mengmeng.li@epfl.ch \\
        \name Daniel Kuhn \email daniel.kuhn@epfl.ch \\
       \addr Risk Analytics and Optimization Chair\\
       EPFL, Switzerland
       \AND
       \name Tobias Sutter \email tobias.sutter@uni-konstanz.de \\
       \addr Department of Computer Science\\
       University of Konstanz, Germany}

\maketitle

\begin{abstract}
We study offline reinforcement learning problems with a long-run average reward objective. The state-action pairs generated by any fixed behavioral policy thus follow a Markov chain, and the {\em empirical} state-action-next-state distribution satisfies a large deviations principle. We use the rate function of this large deviations principle to construct an uncertainty set for the unknown {\em true} state-action-next-state distribution. We also construct a distribution shift transformation that maps any distribution in this uncertainty set to a state-action-next-state distribution of the Markov chain generated by a fixed evaluation policy, which may differ from the unknown behavioral policy. We prove that the worst-case average reward of the evaluation policy with respect to all distributions in the shifted uncertainty set provides, in a rigorous statistical sense, the least conservative estimator for the average reward under the unknown true distribution. This guarantee is available even if one has only access to one single trajectory of serially correlated state-action pairs. The emerging robust optimization problem can be viewed as a robust Markov decision process with a non-rectangular uncertainty set. We adapt an efficient policy gradient algorithm to solve this problem. Numerical experiments show that our methods compare favorably against state-of-the-art methods.
\end{abstract}

\begin{keywords}
  Offline Reinforcement Learning, Off-Policy Evaluation, Large Deviations Theory, Markov Decision Processes, Distributionally Robust Optimization
\end{keywords}

\section{Introduction} 

Recent advances in reinforcement learning have led to remarkable performance improvements in sequential decision-making across various domains, including strategic gameplay \citep{ref:Silver_2016, openai2019solving}, robotic control \citep{ref:Andrychowicz-20}, autonomous teaching \citep{mandel2014offline}, or online recommendation \citep{liu2018deep} among others. Reinforcement learning is particularly successful whenever online data can be acquired through repeated, low-cost interactions with the environment \citep{sutton2018reinforcement, bertsekas2023course}. In many applications, however, continuous and/or inexpensive interaction with the system is not feasible, limiting the applicability of traditional reinforcement learning methods. In such cases, one must rely on {\em offline reinforcement learning}, which learns an optimal policy from pre-collected data without the opportunity for further exploration \citep{ref:levine2020offline}. Offline reinforcement learning is attractive when active experimentation is prohibitively costly or unethical. It is widely used, for instance, in education \citep{mandel2014offline}, healthcare \citep{oberst2019counterfactual} or marketing~\citep{gottesman2019guidelines}.

Throughout this paper we study offline reinforcement learning problems that seek a stationary policy with maximum average reward for an infinite-horizon tabular Markov decision process (MDP). While the transition kernel of the MDP is assumed to be unknown, the agent has access to a single finite state-action trajectory generated under a fixed behavioral policy, which may or may not be known. In offline reinforcement learning, it is common to distinguish {\em off-policy evaluation} and {\em offline policy optimization} problems. Off-policy evaluation seeks an accurate estimate for the average reward of a fixed evaluation policy (which typically differs from the behavioral policy that generates the data). Offline policy optimization, on the other hand, uses an off-policy evaluation oracle in order to find a policy that maximizes long-run average reward.

The effectiveness of an offline reinforcement learning algorithm heavily relies on the reliability of the underlying off-policy evaluation oracle. To our best knowledge, the vast majority of commonly used oracles are designed for the discounted reward criterion, with few exceptions such as~\citep{zhang2021average,saxena2023off}. In addition, they typically require access to multiple independent state-action trajectories generated under the same behavioral policy. Arguably the simplest approach to off-policy evaluation is the {\em direct method}, which uses the available data trajectories to estimate a parametric model for the discounted reward of the evaluation policy. For example, the discounted reward is representable as a function of the transition kernel or the action-value function, which can be inferred via maximum likelihood estimation \citep{ref:Mannor-07} or linear regression \citep{ernst2005tree,le2019batch}, respectively, 
see also \citep{ref:Csaba:OPE-06, ref:Lagoudakis-03}. However, the resulting reward estimators are prone to significant bias for several reasons. First, the discounted reward depends nonlinearly on the transition kernel. Thus, unbiased estimators for the transition kernel result in biased reward estimators. This bias is most pronounced if the likelihood ratio between the evaluation policy and the behavioral policy is high, that is, if some actions are chosen much more often under the evaluation policy than under the behavioral policy. Second, parametric estimators for the action-value function are typically biased due to model misspecification. Importance sampling methods reweight the observed rewards using the likelihood ratio between the evaluation and behavioral policies \citep{ref:Precup-00, ref:Hirano-03, ref:Joachims-15}. This approach obviates an explicit parameterization of the value function but suffers from high variance, which grows rapidly with the length of the observed state-action trajectories. To mitigate these limitations, doubly robust methods combine direct estimation with an importance sampling correction~\citep{jiang2016doubly,thomas2016data}. These approaches use the direct method to construct a baseline estimator and correct residual errors with importance weighting. The resulting estimators remain unbiased if either the baseline estimator or the importance weights are accurate, while generally reducing variance compared to pure importance sampling and mitigating bias compared to the direct method---hence the attribute ``{\em doubly robust}.'' These methods are known to achieve the lowest asymptotic variance among unbiased estimators \citep{kallus2019efficiently}.

All off-policy evaluation methods discussed so far assume that the testing data is generated under the same transition kernel as the training data. This assumption is inappropriate if the transition kernel of the MDP may change over time because distribution shifts can significantly distort reward estimates \citep{wang2020reliable}. Distributionally robust off-policy evaluation methods compute the worst-case average reward of the evaluation policy over an ambiguity set of plausible transition kernels. This approach mitigates risks from model misspecification and sampling bias while yielding confidence bounds on the true average reward \citep{shi2022distributionally, ma2022distributionally, panaganti2022robust, bhardwaj2024adversarial, ramesh2024distributionally, ref:si-2020, kallus2022doubly}.  
Notably, state-of-the-art offline reinforcement learning methods often build on pessimistic estimators that underestimate the average reward \citep{yin2021towards, yan2023efficacy, ref:Uehara-23, ref:Hu-25} or work with regularized value functions \citep{xie2021bellman,zhan2022offline, fakoor2021continuous}.


Recall that offline policy optimization estimates the highest achievable reward by maximizing the reward estimate provided by an off-policy evaluation oracle over all admissible policies. It is well known that—for any fixed training sample size—this estimated maximum is {\em optimistically} biased, even when the underlying off-policy evaluation oracle is unbiased. This is simply a manifestation of the notorious optimizer’s curse \citep{smith2006optimizer}. In finite samples, the optimistic bias in the estimated maximum can only be reduced (let alone eliminated) by employing a {\em pessimistically} biased off-policy evaluation oracle. Such oracles are designed to produce lower confidence bounds on the true maximum reward with a small significance level~$\beta\in(0,1)$. In data-driven optimization, $\beta$ is sometimes referred to as the {\em out-of-sample disappointment} \citep{van2021data}.

It is natural to call an off-policy evaluation oracle {\em efficient} if it is the least conservative among all oracles whose out-of-sample disappointment does not exceed a prescribed threshold. Thus, an efficient oracle strikes an optimal trade-off between the reward it predicts (which should be as {\em high} as possible) and the out-of-sample disappointment it incurs (which should be as {\em low} as possible). We aim to construct an off-policy evaluation oracle for tabular MDPs that is efficient in this sense. On a high level, our construction can be explained as follows. We first show that there is a one-to-one correspondence between the stationary state-action-next-state distribution of a controlled MDP and the combination of the MDP's transition kernel and the applied control policy. Thus, the state-action-next-state distribution corresponding to the behavioral policy encapsulates all the information needed to compute the long-run average reward of any given evaluation policy. Next, we demonstrate that the \emph{empirical} state-action-next-state distribution, derived from the observed state-action trajectory of the behavioral policy, serves as a consistent estimator for the true state-action-next-state distribution. Moreover, this estimator satisfies a large deviations principle with a rate function reminiscent of the conditional relative entropy. Finally, we estimate the average reward of the evaluation policy under the unknown true transition kernel by considering the worst-case (infimal) average reward across all transition kernels whose state-action-next-state distributions under the behavioral policy deviate from the empirical state-action-next-state distribution by at most~$\rho$, as measured by the rate function of the large deviations principle. We prove that the out-of-sample disappointment of the resulting distributionally robust off-policy evaluation oracle decays exponentially at a rate of~$\rho$ with the length of the observation history. Furthermore, we show that this oracle is the least conservative among all oracles whose out-of-sample disappointment decays at least at rate~$\rho$, making the proposed off-policy oracle asymptotically efficient.

We emphasize that our efficiency guarantees remain valid even when only a single trajectory of serially correlated states and actions is available, and even if the behavioral policy generating this trajectory is unknown. Moreover, we prove that our efficient off-policy evaluation oracle naturally leads to an efficient estimate for the optimal value of the corresponding offline reinforcement learning problem. Computing this optimal value requires solving a robust MDP with a non-rectangular uncertainty set. Unfortunately, such problems are generically NP-hard \citep{wiesemann2013robust}. To address this, we tailor the actor-critic algorithm proposed by \citet{li2023policy}, originally designed for robust discounted reward MDPs with arbitrary non-rectangular uncertainty sets, to the problem at hand. Given an oracle for approximating the (NP-hard) robust policy evaluation subproblem, we show that this algorithm finds an $\epsilon$-optimal solution for the corresponding policy improvement problem in $O(1/\epsilon^4)$ iterations. Approximate solutions for the robust policy evaluation subproblem can be obtained using the randomized projected Langevin dynamics algorithm proposed by \citet{li2023policy}. Although the runtime of this algorithm scales exponentially with the number of states and actions, numerical experiments suggest that it remains effective in practice.

In a nutshell, the main contributions of this paper can be summarized as follows.
\begin{itemize}  
\item We propose a novel approach to offline reinforcement learning that applies even when only a \textit{single trajectory} of correlated data is available, generated under an \textit{unknown} behavioral policy.  

\item We develop a distributionally robust off-policy evaluation oracle that is statistically efficient, thus optimally balancing in-sample performance against out-of-sample disappointment. Furthermore, we show that this oracle yields an efficient estimator for the optimal value of the corresponding offline reinforcement learning problem.  

\item Computing the proposed estimators reduces to solving a robust MDP with a non-rectangular uncertainty set. To address this hard problem, we adapt an existing actor-critic algorithm to solve the robust MDP approximately. Numerical experiments show that the resulting estimators are competitive with state-of-the-art baselines on standard test problems.  
\end{itemize}  

Our paper contributes to a stream of research that exploits large deviations theory in order to construct distributionally robust estimators for the optimal solutions of data-driven decision problems that enjoy statistical efficiency guarantees. \cite{van2021data} develop efficient estimators for {\em static} stochastic programs assuming only access to {\em independent} samples from the distribution of the uncertain problem parameters. They show that a distributionally robust optimization model with a relative entropy uncertainty set is statistically optimal. \citet{ref:Sutter-19} extend this model to more general data-generating processes with serially dependent observations. They show that, when the data process is governed by a parametric distribution and the underlying parameters admit a sufficient statistic satisfying a large deviations principle, then solving a distributionally robust optimization problem with an uncertainty set constructed via the rate function of the large deviations principle is statistically optimal. \cite{li2021distributionally} propose a customized Frank-Wolfe algorithm to compute efficient distributionally robust estimators under the assumption that the data is generated by a Markov chain. However, this algorithm only guarantees convergence to a stationary point, which may lack the efficiency properties of the global optimizer. \cite{bennouna2021learning} explore the same setting as \cite{van2021data}, analyzing the effects of imposing different decay rates on out-of-sample disappointment. They show that distributionally robust estimators with a relative entropy uncertainty sets are optimal in the exponential regime, variance-regularized empirical estimators are optimal in the sub-exponential regime, and worst-case robust estimators are optimal in the super-exponential regime. 
Similarly, \citet{ref:ganguly-2024} examine the construction of confidence intervals in the moderate deviation regime and establish the efficiency of distributionally robust estimators.
Compared to all these works, our model is the only one to simultaneously offer the following benefits. It realistically assumes that the available data is limited to a single finite trajectory of states and actions generated by an MDP controlled by a stationary behavioral policy. It does {\em not} rely on the restrictive assumption that all transitions between arbitrary state-action pairs have a positive probability. It recognizes that transitions between certain state-action pairs must have identical probabilities because the behavioral policy is stationary. It explicitly accounts for the distribution shift between the state-action trajectory distributions under the behavioral policy and the evaluation policy, respectively. In addition, our model delivers estimators that enjoy both asymptotic and finite-sample guarantees on the out-of-sample disappointment. Finally, we provide an algorithm that solves the relevant robust MDPs to global optimality, thus ensuring that the proposed efficient estimators are accessible. \cite{sutter2021robust} also study off-policy evaluation problems that explicitly account for distribution shifts. However, they assume to have access to independent samples from the stationary state-action-next-state distribution, and they assume that only the reward function is unknown, whereas the transition kernel and the behavioral policy are known. Finally, their methods do not extend easily to offline policy optimization. 


\paragraph{Structure.}
The rest of the paper is structured as follows. Section~\ref{sec:OPE} reviews and extends the large deviations theory for Markov chains and Markov decision processes. Leveraging this theory, Sections~\ref{ssec:OPE:dist:shift} and~\ref{ssec:OPL:dist:shift} propose statistically optimal solutions for the off-policy evaluation and the offline policy optimization problems, respectively. Section~\ref{sec:computational:aspects} then develops a projected Langevin dynamics method for solving the robust policy evaluation problem and an actor-critic method for solving the offline policy optimization problem. Finally, Section~\ref{sec:numerical:experiments} validates the efficiency of the proposed estimators on standard test problems from reinforcement learning and operations research.

\paragraph{Notation.}
The probability simplex over a finite set $\mc X$ is defined as $\Delta(\mc X) = \{p \in \mb R^{|\mc X|}_+ : \sum_{x\in\mc X} p(x) = 1\}$. The relative entropy of $p\in\Delta(\mc X)$ with respect to $q\in\Delta(\mc X)$ is defined as $\mathsf{D}(p \| q)=\sum_{x\in\mc X} p(x) \log \left(p(x)/q(x)\right)$, where we use the conventions $0\log(0/t)=0$ for any~$t\geq 0$ and $t\log(t/0)=\infty$ for any~$t>0$. The support of~$p\in\Delta(\mc X)$ is the set $\supp (p) = \{x\in\mc X: p(x)>0\}.$
\section{Statistics of Markov Chains and Markov Decision Processes} \label{sec:OPE}

Off-policy evaluation and offline policy optimization constitute statistical learning problems based on Markovian data. In Section~\ref{sec:PMDI} we thus review and generalize a large deviations principle for the doublet distribution of an irreducible Markov chain, which provides a mathematical framework for studying the probabilities of rare events. Building on these insights, in Section~\ref{ssec:problem:statement} we then derive a large deviations principle for the state-action-next-state distribution of a Markov decision process.

\subsection{Markov Chains}\label{sec:PMDI}

Consider
a time-homogeneous irreducible Markov chain given by a triple $(\mc X, P, \gamma)$ consisting of a finite state space $\mc X=\{1,\ldots, d\}$, a transition probability matrix $P\in \Delta(\mc X)^d$ and an initial distribution $\gamma\in \Delta(\mc X)$. If the system underlying the Markov chain is in state $x_t\in\mc X$ at time~$t$, then it moves to state~$x_{t+1}\in\mc X$ at time~$t+1$ with probability $P(x_t,x_{t+1})$. Thus, $P(x_t,\cdot)$ represents the distribution of~$X_{t+1}$ conditional on $X_t=x_t$. There exists a unique probability measure $\mb P_P$ defined on the canonical sample space $\Omega=\mc X^\infty$ equipped with its power set $\sigma$-algebra $\mc F=2^\Omega$ such that
\[
    \mb P_P(X_1=x_1)=\gamma(x_1)\quad \forall x_1\in\mc X
\]
and
\[
    \mb P_P(X_{t+1}=x_{t+1}| X_t=x_t,\ldots ,X_1=x_1)=P(x_t,x_{t+1})\quad \forall x_1,\ldots x_{t+1}\in\mc X.
\]
Note that $\mb P_P$ also depends on~$\gamma$, but we suppress this dependence to avoid clutter. As the Markov chain at hand is irreducible, the Perron-Frobenius theorem implies that there exists a unique stationary state distribution $\mu \in \Delta(\mathcal{X})$ that satisfies $\mu(y)=\sum_{x\in\mc X}\mu(x) P(x,y)>0$ for all $y\in\mc X$. Using~$P$ and $\mu$, we can further define the stationary doublet distribution $\theta \in\Delta(\mc X\times\mc X)$ through $\theta(x,y)=\mu(x)P(x,y)$ for all $x,y\in\mc X$. From the last formula it is evident that the transition probability matrix~$P$ can be recovered from~$\theta$, that is, we have $P(x,y)=\theta(x,y)/\mu(x)$ for all~$x,y\in\mc X$. Hence, there is a one-to-one correspondence between~$P$ and~$\theta$. Without loss of generality, we can thus denote the probability measure governing the Markov chain $\{X_t\}_{t=1}^\infty$ by~$\mb P_\theta$ instead of~$\mb P_P$. We prefer~$\theta$ to~$P$ because it admits a simple estimator that satisfies a large deviations principle. 

Note that $\theta$ has balanced marginals in the sense that
\begin{equation}
    \label{eq:balanced-marginals}
    \sum_{x\in\mc X} \theta(x,y)= \mu(y) = \sum_{y\in\mc X} \theta(y,x) \quad\forall y\in\mc X,
\end{equation}
where the first equality follows from the definition of~$\mu$, and the second equality follows from the definition of~$\theta$. In the following we use~$\Theta$ to denote the set of all doublet distributions~$\theta\in\Delta(\mc X\times\mc X)$ that satisfy~\eqref{eq:balanced-marginals}. We emphasize that not every~$\theta\in\Theta$ represents the doublet distribution of an irreducible Markov chain, that is, it is not sufficient for~$\theta$ to have balanced marginals. In addition, the pair $(\mc X,\supp (\theta))$ must represent a strongly connected directed graph with vertex set~$\mc X$ and edge set~$\supp(\theta)$. The strong connectedness ensures that the underlying Markov chain has only one single communicating class of states. In the following, we use $\Theta_0\subseteq \Theta$ to denote the set of all doublet distributions that induce an irreducible Markov chain.

The Markov law of large numbers \citep[Theorem~1.7.6]{norris1998markov} implies that
\begin{equation}\label{eq:def:doublet:frequency}
    \theta(x,y)=\lim_{T\to\infty}\frac{1}{T}\sum_{t=1}^T\mb P_\theta (X_t=x, X_{t+1}=y) \quad \forall x,y\in\mc X, \quad\forall\theta\in\Theta_0.
\end{equation}
Given a finite trajectory of state observations $\{ X_t\}_{t=1}^T$, a natural estimator for~$\theta$ is thus given by the empirical doublet distribution $\widehat\theta_T\in\Delta(\mc X\times\mc X)$, which is defined through
\begin{equation} \label{MC:estimator}
    \widehat{\theta}_{T}(x,y)=\frac{1}{T} \left( \sum_{t=1}^{T-1} \indic{{X}_{t}=x,{X}_{t+1}=y} + \indic{{X}_{T}=x,{X}_1=y}\right)\quad \forall x,y\in\mc X.
\end{equation}
The ghost transition from $X_T$ to $X_1$ \citep{vidyasagar2014elementary} in this definition ensures that~$\thetahat$ has balanced marginals and thus lies in~$\Theta$. In the following, we refer to elements of~$\Theta_0$ as models and elements of~$\Theta$ as estimator realizations.
It is natural to measure the discrepancy between a model $\theta\in\Theta_0$ and an estimator realization $\theta'\in\Theta$ by their conditional relative entropy. 
\begin{definition}[Conditional relative entropy for Markov chains] 
\label{def:conditional_relative_entropy}
The conditional relative entropy of $\theta'\in\Theta$ with respect to $\theta\in \Theta_0$ is defined as
\begin{align}\label{def:Dc:Theta}
	\mathsf{D_{mc}}(\theta'\|\theta)
	=\sum_{x,y\in\mc X} \theta'(x,y) \left( \log \frac{\theta'(x,y)}{\sum_{z\in\mc X} \theta'(x,z)}  -  \log \frac{\theta(x,y)}{\sum_{z\in\mc X} \theta(x,z)}\right).
\end{align}
\end{definition}
The conditional relative entropy is well-defined for all $\theta'\in \Theta$ and $\theta\in\Theta_0$. Indeed, due to our conventions for the logarithm, $\mathsf{D_{mc}}(\theta'\|\theta)$ is finite whenever $\supp(\theta')\subseteq\supp(\theta)$ and evaluates to~$+\infty$ otherwise. If the doublet distributions~$\theta'$ and~$\theta$ belong to~$\Theta_0$, then they induce irreducible Markov chains with unique transition probability matrices~$P'$ and~$P$ and stationary distributions~$\mu'$ and~$\mu$, respectively. In this case, the conditional relative entropy can be equivalently expressed as
\begin{align}\label{Dc:weighted:transition:lratio}
   \mathsf{D_{mc}}(\theta'\|\theta)= \sum_{x,y\in\mc X} \theta'(x,y) \log \frac{P'(x,y)}{P(x,y)}
=\sum_{x\in\mc X} \mu'(x) \, \mathsf{D}(P'(x,\cdot)\|P(x,\cdot)).
\end{align}
The last formula in~\eqref{Dc:weighted:transition:lratio} motivates the name ``conditional relative entropy.'' In addition, $\Dc{\theta'}{\theta}$ admits a unique lower semi-continuous extension to $\Theta\times\Theta$, which is obtained by setting 
$$
    \Dc{\theta'}{\theta}=\lim_{\delta \downarrow 0}\inf_{(\vartheta',\vartheta)\in\Theta\times\Theta_0} \left\{ \Dc{\vartheta'}{\vartheta}:\|(\vartheta',\vartheta)-(\theta',\theta)\|\le\delta \right\} \quad\forall (\theta',\theta)\in\Theta\times(\Theta\backslash\Theta_0);
$$
see \citep[p.~1996]{ref:Sutter-19} and \citep[Definition 1.5]{rockafellar2009variational}. In the following, we will always mean this lower semi-continuous extension to $\Theta\times\Theta$ when referring to~$\Dc{\theta'}{\theta}$. 

The conditional relative entropy is significant because it represents the rate function of a large deviations principle for the empirical doublet distribution $\widehat \theta_T$. Large deviations theory provides bounds on the exponential rate at which the probability of a rare event $\widehat\theta_T\in\mc D$ decays as the length $T$ of the observation history grows. These bounds are expressed in terms of the infimum of a rate function over the set~$\mc D$ or its interior.
The classical large deviations principle for Markov chains~{\cite[Theorem~1]{natarajan1985large}} assumes that $\supp(\theta)= \mc X\times \mc X$. We generalize this classical result to arbitrary irreducible Markov chains. This generalization necessitates only cosmetic changes in the proof. We provide a proof sketch to keep this paper self-contained.

\begin{theorem}[Large deviations principle for Markov chains] 
\label{thm:LDP}
For all $\theta \in \Theta_0$ and Borel sets $\mathcal{D} \subseteq \Theta$, the empirical doublet distribution $\widehat{\theta}_{T}$ defined in~\eqref{MC:estimator} satisfies
$$
\begin{aligned}
    -\inf _{\theta^{\prime} \in \operatorname{int} \mathcal{D}} \mathsf {D_{mc}}(\theta^{\prime}\| \theta) & \leq \liminf _{T \rightarrow \infty} \frac{1}{T} \log \mathbb{P}_{\theta}\left(\widehat{\theta}_{T} \in \mathcal{D}\right) \\
    & \leq \limsup_{T \rightarrow \infty} \frac{1}{T} \log \mathbb{P}_{\theta}\left(\widehat{\theta}_{T} \in \mathcal{D}\right) \leq-\inf _{\theta^{\prime} \in \mathcal{D}} \mathsf {D_{mc}}(\theta^{\prime}\| \theta).
\end{aligned}
$$
\end{theorem}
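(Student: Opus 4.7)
The plan is to follow the classical method-of-types argument of \cite{natarajan1985large}, with two cosmetic changes to accommodate the possibility that $\supp(\theta)$ is a strict subset of $\mc X\times \mc X$ and to handle the ghost edge cleanly. Throughout the argument I would restrict attention to admissible types, i.e.\ doublet distributions $\theta'\in\Theta$ with $\supp(\theta')\subseteq \supp(\theta)$; any $\theta'$ violating this has $\Dc{\theta'}{\theta}=+\infty$ by Definition~\ref{def:conditional_relative_entropy}, and apart from the $O(1/T)$ mass contributed by the ghost edge the trajectories that realize such $\thetahat$ have probability zero under $\mb P_\theta$. For an admissible $\theta'$ all of whose entries are integer multiples of $1/T$, the BEST theorem together with a Stirling estimate counts the trajectories $(x_1,\ldots,x_T)$ whose augmented empirical doublet distribution equals $\theta'$ as $\exp(T H(\theta') + o(T))$, where $H(\theta') = -\sum_{x,y}\theta'(x,y)\log(\theta'(x,y)/\sum_z \theta'(x,z))$ is the conditional entropy. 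Each such trajectory has $\mb P_\theta$-probability $\gamma(x_1)\prod_{t=1}^{T-1} P(x_t,x_{t+1}) = \exp\bigl(-T\sum_{x,y}\theta'(x,y)\log(1/P(x,y))+O(\log T)\bigr)$, so multiplying the count by the per-trajectory probability yields
\begin{equation*}
\mb P_\theta(\thetahat=\theta') = \exp\bigl(-T\,\Dc{\theta'}{\theta} + o(T)\bigr).
\end{equation*}

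With this pointwise type estimate in hand, the upper bound of the LDP follows from a union bound over the at most $(T+1)^{|\mc X|^2}$ admissible types contained in $\mc D$, combined with lower semicontinuity of $\theta' \mapsto \Dc{\theta'}{\theta}$ on $\Theta$, which permits replacing the infimum over types in $\mc D$ by the infimum over all $\theta'\in\mc D$. For the lower bound, I would pick an arbitrary $\theta' \in \mr{int}\,\mc D \cap \Theta_0$ with $\supp(\theta')\subseteq \supp(\theta)$ and perform a change-of-measure: under $\mb P_{\theta'}$ the law of large numbers in~\eqref{eq:def:doublet:frequency} forces $\thetahat\to\theta'$ almost surely, while the Radon-Nikodym derivative $\tfrac{d\mb P_\theta}{d\mb P_{\theta'}}$ of the two path laws, evaluated on the event $\{\thetahat$ close to $\theta'\}$, equals $\exp\bigl(-T\,\Dc{\theta'}{\theta}+o(T)\bigr)$. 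This gives $\mb P_\theta(\thetahat \in \mc D) \geq \exp(-T\,\Dc{\theta'}{\theta}+o(T))$; optimizing over $\theta'$ and invoking the lower-semicontinuous extension of $\Dc{\cdot}{\cdot}$ recalled before the statement delivers the infimum over all $\theta' \in \mr{int}\,\mc D$.

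The main obstacle lies in controlling the two edge cases that distinguish this theorem from its full-support predecessor: (i)~the ghost edge $(X_T,X_1)$ may fall outside $\supp(\theta)$, and (ii)~the approximating sequence in the lower bound must remain in $\Theta_0$ (not merely in $\Theta$) so that the law of large numbers in~\eqref{eq:def:doublet:frequency} applies. Issue~(i) is harmless because the ghost contribution has total variation at most $2/T$ and can be absorbed into any open neighborhood of $\theta'$, though care is needed in making the counting step exact rather than approximate. Issue~(ii) is more delicate: given a boundary $\theta' \in \mr{int}\,\mc D$, I would regularize by forming $(1-\varepsilon)\theta' + \varepsilon\theta$, which inherits the strong connectedness of $\supp(\theta)$ and therefore lies in $\Theta_0$, and then let $\varepsilon\downarrow 0$, invoking convexity and lower semicontinuity of $\Dc{\cdot}{\theta}$ to recover the correct rate. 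This regularization is exactly the step at which Natarajan's full-support assumption on $\theta$ can be weakened to irreducibility, thereby delivering the generalization claimed.
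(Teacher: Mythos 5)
Your proposal is correct, and the upper bound follows essentially the same route as the paper: a method-of-types argument in which the number of trajectories realizing a given type $\theta'$ is counted as $\exp(T\,\mathsf{H_c}(\theta')+o(T))$ (the paper cites Vidyasagar's bounds, which rest on exactly the BEST-theorem count you invoke), each such trajectory is assigned probability $\exp(-T(\mathsf{H_c}(\theta')+\Dc{\theta'}{\theta})+O(\log T))$, and a union bound over the polynomially many types finishes the job. Where you genuinely diverge is the lower bound. The paper stays within the method of types: it exploits that $\cup_T\Theta_T$ is dense in $\mr{int}\,\mc D$, selects a deterministic sequence of types $\theta'_T\in\Theta_T\cap\mr{int}\,\mc D$ approaching the infimal rate, and lower-bounds $\mb P_\theta(\thetahat\in\mc D)\ge\mb P_\theta(\thetahat=\theta'_T)$ directly via the same cardinality and per-path probability estimates; notably this requires no irreducibility of the approximating types and no law of large numbers. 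You instead use the classical tilting argument: change measure to $\mb P_{\theta'}$, let the ergodic theorem force $\thetahat\to\theta'$, and read off the exponent from the log-likelihood ratio. This buys a more conceptual proof and avoids the (somewhat delicate) claim that the rate function can be approximated along a sequence of exact types, but it costs you the extra regularization $(1-\varepsilon)\theta'+\varepsilon\theta$ to land in $\Theta_0$ — a step you handle correctly via convexity of $\Dc{\cdot}{\theta}$ and the fact that the perturbed support contains the strongly connected $\supp(\theta)$ (the invocation of the lower semicontinuous extension in the second argument is unnecessary here, since $\theta$ is fixed in $\Theta_0$; the regularization alone already shows the infimum over $\mr{int}\,\mc D\cap\Theta_0$ equals that over $\mr{int}\,\mc D$). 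One side effect worth noting: the paper's type-counting lower bound yields a quantitative, finite-$T$ inequality that it reuses for Corollary~3, whereas your change-of-measure bound is intrinsically asymptotic.
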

\begin{proof}
Fix an arbitrary $\theta\in\Theta_0$, and define $\Theta_T=\Theta\cap\frac{1}{T}\{0,\ldots,T\}^{d\times d}$
as the set of all possible realizations of~$\widehat\theta_T$. 
One readily verifies that
\begin{align*}
 &\frac{1}{T}\log \mb P_{\theta}\left(\thetahat\in\mc D\right)
 =\frac{1}{T}\log \mb P_{\theta}\left(\thetahat\in\mc D\cap \Theta_T\right)
 \le \frac{1}{T} \log\left(|\mc D\cap \Theta_T|\sup_{\theta'\in\mc D\cap \Theta_T} \mb P_\theta\left(\widehat\theta_T=\theta'\right)\right)
\\
&\qquad\leq  \frac{1}{T}\log \left(|\Theta_T| \sup_{\theta'\in\mc D}\mb P_{\theta}\left(\thetahat=\theta'\right)\right).
 \end{align*}
 Next, define the type class
 \begin{align}\label{expr:type:class}
    J(\theta')=\left\{\{x_t\}_{t=1}^{T}\in\mc X^T: \frac{1}{T} \left( \sum_{t=1}^{T-1} \indic{{x}_{t}=x,{x}_{t+1}=y} + \indic{{x}_{T}=x,{x}_1=y}\right)=\theta'(x,y) \ \forall x,y\in\mc X\right\}
 \end{align}
as the set of all sample paths consistent with the estimator realization $\theta'\in\Theta$. Thus, we have that $\widehat\theta_T=\theta'$ if and only if $\{X_t\}^{T}_{t=1}\in J(\theta').$ The above reasoning therefore implies that
\begin{align}
\frac{1}{T}\log  \mathbb{P}_{\theta}&\left(\widehat\theta_T\in\mc D\right)\nonumber\\
  &\leq \frac{d^{2}}{T}\log (T+1) +\frac{1}{T} \log\sup_{\theta'\in\mc D}\mb P_{\theta}\left(\thetahat=\theta'\right)\nonumber\\
    &\leq \frac{d^{2}}{T}\log (T+1) 
    +\frac{1}{T}  \sup_{\theta'\in\mc D}\log\left(|J(\theta')|\sup_{\{x_t\}_{t=1}^{T}\in J(\theta')}\mb P_{\theta}(X_t=x_t \ \forall t=1,\ldots,T)\right).\label{ineq:type:class:bound}
    \end{align}
Next, use $\mu$ and $\mu'$ to denote the stationary state distributions corresponding to the stationary doublet distributions $\theta$ and $\theta'$, respectively. It follows from~\citep[Theorem 11]{vidyasagar2014elementary} that
\begin{align}\label{type:card:bound}
-d^2\log(2T)+T\,\mathsf{H_c}(\theta')   \le \log |J(\theta')|\le \log T+T\, \mathsf{H_c}(\theta')    ,
\end{align}
where
\[\mathsf{H_c}(\theta')=\sum_{x\in\mc X}\mu' (x)\log \mu'(x)-\sum_{x,y\in\mc X}\theta'(x,y)\log \theta'(x,y)\]
is the conditional entropy of the estimator realization $\theta'\in\Theta.$
    Similarly, it follows from inequalities~(35) and (36) in~\citep{vidyasagar2014elementary} that
  \begin{equation}\label{type:prob:bound}
   -T \,(\mathsf{H_c}(\theta')+\Dc{{\theta'}}{\tb})+\underbar c  \le \log  \mb P_{\theta}(X_t=x_t \ \forall t=1,\ldots,T) \leq -T \,(\mathsf{H_c}(\theta')+\Dc{{\theta'}}{\tb})+\bar c,
  \end{equation}
  where $ \underbar c=\min_{x,y\in\mc X}\log(\mu(x)\mu(y))/\tb(x,y))$ and $ \bar c=\max_{x,y\in\mc X}\log(\mu(x)\mu (y))/\tb(x,y))$.
    Substituting these estimates into~\eqref{ineq:type:class:bound} yields
    \begin{align*}
            \frac{1}{T}\log  \mathbb{P}_{\theta}\left(\widehat\theta_T\in \mc D\right)
    &\leq\frac{d^{2}}{T}\log (T+1) + \frac{1}{T}\log (T)+\sup_{\theta'\in \mc D}(-\Dc{\theta'}{\theta}+\frac{1}{T}\bar c)\\
    &=\frac{d^{2}}{T}\log (T+1) + \frac{1}{T}(\log (T)+\bar c)-\inf_{\theta'\in \mc D}\Dc{\theta'}{\theta},
\end{align*}
from which the upper bound on $\limsup_{T \rightarrow \infty} \frac{1}{T} \log \mathbb{P}_{\theta}(\widehat{\theta}_{T} \in \mathcal{D})
$ follows immediately.

As for the lower bound, note that $\cup_{T\in\mb N} \Theta_T$ is dense in $\mathrm{int}\mc D$. 
Hence, there is $T_0\in\mb N$ that only depends on $\mc D$ and a deterministic sequence $\{\theta'_T\}_{T\in\mb N}$ such that $\theta'_T\in\Theta_T\cap\mathrm{int}\mc D$ for all $T\ge T_0$ and
\[\inf_{\theta'\in\mathrm{int} \mc D}\Dc{\theta'}{\theta}=\liminf_{T\to\infty}\Dc{\theta'_T}{\theta}.
\]
For all $T\ge T_0$,  
we then have
\begin{align*}
    \frac{1}{T}\log \mathbb{P}_{\theta}\left(\widehat\theta_T\in \mc D\right)
    &\ge \frac{1}{T}\log \mb P_\theta \left(\widehat\theta_T=\theta'_T\right)
    \\&\ge\frac{1}{T}\log \left(|J(\theta'_T)|\inf_{\{x_t\}_{t=1}^{T}\in J(\theta'_T)}\mb P_{\theta}(X_t=x_t \ \forall t=1,\ldots,T)\right)
    \\&\ge -\frac{d^2}{T}\log (2T)+\frac{1}{T}(\log T+\underbar c)-\Dc{\theta'_T}{\theta},
\end{align*}
where the first equality holds because $T\ge T_0$, and the third inequality follows again from~\eqref{type:card:bound} and~\eqref{type:prob:bound}. Taking the limit inferior as $T$ tends to infinity finally yields the desired lower bound.
\end{proof}

Corollary~\ref{ldp:finite} below establishes a finite-sample version of the large deviations upper bound in Theorem~\ref{thm:LDP}. Its proof follows immediately from that of Theorem~\ref{thm:LDP} and is therefore omitted. 
\begin{corollary}[Finite-sample version of Theorem~\ref{thm:LDP}]\label{ldp:finite}
\hspace{-0.2ex} For all $\theta \in \Theta_0$ and Borel sets $\mathcal{D}\subseteq \Theta$, the empirical doublet distribution $\widehat{\theta}_{T} $ defined in~\eqref{MC:estimator} satisfies
$$\begin{aligned}
\frac{1}{T}\log  \mathbb{P}_{\theta}\left(\widehat\theta_T\in \mc D\right)
\leq \frac{1}{T}(\log T+\bar c+ d^2\log (T+1) )-\inf_{\theta'\in \mc D}\Dc{\theta'}{\theta} \quad \quad\forall T \in \mb N,
\end{aligned}$$ 
where $\bar c>0$ is a universal constant that depends only on~$\theta.$
\end{corollary}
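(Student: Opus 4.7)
The plan is to observe that the upper-bound half of the proof of Theorem~\ref{thm:LDP} is carried out in a fully finitary manner, and that the desired corollary is obtained simply by \emph{not} taking the $\limsup$ at the end. In other words, the intermediate inequality
$$
\frac{1}{T}\log \mathbb{P}_{\theta}(\thetahat\in \mc D) \;\le\; \frac{d^{2}}{T}\log (T+1) + \frac{1}{T}(\log T+\bar c)-\inf_{\theta'\in \mc D}\Dc{\theta'}{\theta},
$$
which already appears in the proof of Theorem~\ref{thm:LDP} before the limiting step, is exactly the claim after grouping the three correction terms.

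Concretely, I would retrace the same three ingredients. First, $|\Theta_T|\le (T+1)^{d^2}$ because each of the $d^2$ entries of $\thetahat$ takes values in the discrete set $\{0,1/T,\dots,1\}$; this yields $\mb P_\theta(\thetahat\in \mc D)\le |\Theta_T|\,\sup_{\theta'\in \mc D}\mb P_\theta(\thetahat=\theta')$. Second, I would decompose the supremum using the type classes $J(\theta')$ defined in~\eqref{expr:type:class} together with the cardinality bound~\eqref{type:card:bound}. Third, I would control the per-path probability via~\eqref{type:prob:bound}. Taking logarithms, dividing by~$T$, and collecting terms recovers the displayed bound, with $\bar c$ the universal constant from~\eqref{type:prob:bound}.

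The only point that warrants a brief check is that $\bar c$ is finite and depends only on $\theta$. This holds because $\theta\in\Theta_0$ ensures that the induced stationary state distribution $\mu$ has full support, and we only need to take the maximum of $\log(\mu(x)\mu(y)/\theta(x,y))$ over $\supp(\theta)$: pairs $(x,y)\notin\supp(\theta)$ cannot occur along any sample path in $J(\theta')$ that contributes positively to $\mb P_\theta(\thetahat=\theta')$, so the corresponding terms never enter the bound. I do not anticipate any genuine obstacle; the whole point is that the upper-bound argument in Theorem~\ref{thm:LDP} is already non-asymptotic, so the corollary is essentially a bookkeeping statement that makes the implicit finite-sample estimate explicit.
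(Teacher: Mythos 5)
Your proposal is correct and is exactly the paper's intended argument: the paper omits the proof of Corollary~\ref{ldp:finite} precisely because the penultimate display in the proof of Theorem~\ref{thm:LDP} already gives the non-asymptotic bound $\frac{1}{T}\log \mathbb{P}_{\theta}(\widehat\theta_T\in \mc D)\leq\frac{d^{2}}{T}\log (T+1) + \frac{1}{T}(\log T+\bar c)-\inf_{\theta'\in \mc D}\Dc{\theta'}{\theta}$ for every $T$, which is the claim after regrouping terms. Your additional check that $\bar c$ is finite (via full support of $\mu$ and restriction to $\supp(\theta)$) is a sensible, correct clarification of a point the paper leaves implicit.
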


Besides representing the rate function of a large deviations principle, the conditional relative entropy~$\mathsf{D}_{\mathsf{mc}}$ has many useful properties including level compactness, radial monotonicity and coercivity. We refer to~\citep[Proposition~5.1]{ref:Sutter-19} for a discussion of these properties.

\subsection{Markov Decision Processes}\label{ssec:problem:statement}
Consider a \textit{Markov decision process} (MDP) given by a five-tuple $(\mathcal{S}, \mathcal{A}, Q, r, \eta)$ consisting of a finite state space $\mathcal{S}=\{1,\ldots,S\}$, a finite action space $\mathcal{A}=\{1,\ldots,A\}$, a transition kernel $Q\in   \mc Q=\Delta(\mathcal{S})^{SA}$, a reward-per-stage function $r: \mathcal{S} \times \mathcal{A} \rightarrow \mathbb{R}$, and an initial distribution $\eta\in\Delta(\mathcal{S})$. If the system underlying the MDP is in state $s_t\in\mc S$ at time $t$ and action $a_t\in\mc A$ is executed, then an immediate reward $r(s_t,a_t)$ is earned, and the system moves to state $s_{t+1}$ at time $t+1$ with probability $Q(s_{t+1}|s_t,a_t)$. Thus, $Q(\cdot|s_t,a_t)$ represents the distribution of $S_{t+1}$ conditional on $S_t=s_t$ and $A_t=a_t$. Actions are chosen according to a stationary policy, which is described by a stochastic kernel $\pi\in\Pi=\Delta(\mc A)^S$. That is, the probability of choosing action $a_{t}$ if the current state is~$s_t$ is characterized by $\pi(a_{t}|s_{t})$. Thus, $\pi(\cdot|s_{t})\in \Delta(\mc A)$ represents the distribution of~$A_{t}$ conditional on~$S_{t}=s_{t}$. Under a stationary policy $\pi$, there exists a unique probability measure $\mathbb{P}_{\pi,Q}$ defined on the canonical sample space $\Omega=(\mathcal{S}\times\mathcal{A})^\infty$ equipped with its power set $\sigma$-algebra $\mc F=2^\Omega$
such that 
\begin{subequations}
    \begin{align}
    \label{expr:initial:distribution:pi}
          \mathbb{P}_{\pi,Q}(S_1=s_1)=\eta(s_1)\quad\forall s_1\in\mc S,
   \end{align}
   and for all $t\in\mb N$ we have
   \begin{equation}
   \label{def:Q:pi}
   \begin{aligned}
    &\mathbb{P}_{\pi,Q}(S_{t+1}=s_{t+1}|S_t = s_t, A_t=a_t,\ldots,S_1=s_1,A_1=a_1) \\
    &\hspace{3cm}= Q(s_{t+1}|s_t,a_t) \quad\forall s_1,\ldots,s_{t+1}\in\mc S,\;a_1,\ldots,a_t\in\mc A,
    \end{aligned}
    \end{equation}
    and
    \begin{equation}
    \label{def:pi:pi}
    \begin{aligned}
        &\mathbb{P}_{\pi,Q}(A_{t+1}=a_{t+1}|S_{t+1}=s_{t+1},S_t = s_t, A_{t}=a_{t},\ldots,S_1=s_1,A_1=a_1) \\
     &\hspace{3cm}= \pi(a_{t+1}|s_{t+1})\quad\forall s_1,\ldots,s_{t+1}\in\mc S,\;a_1,\ldots,a_{t+1}\in\mc A.
\end{aligned}
\end{equation} 
\end{subequations}
Further details about the construction of $\mb P_{\pi,Q}$ are provided in~\citep[\S~2.2]{hernandez2012discrete}. Note that $\mb P_{\pi,Q}$ also depends on~$\eta$, but we suppress this dependence notationally to avoid clutter. To simplify notation, we will use $X_t$ as a shorthand for the state-action pair~$(S_t,A_t)$. Note that $X_t$ ranges over $\mc X = \mathcal{S}\times\mathcal{A}$. As in Section~\ref{sec:PMDI}, $d=SA$ denotes the cardinality of~$\mc X$. 

\begin{proposition}[State-action process $\{X_t\}_{t=1}^\infty$]\label{prop:Xt:MC:ergo}
The stochastic process $\{X_t\}_{t=1}^\infty$ represents a time-ho\-mogeneous Markov chain under $\mb P_{\pi,Q}$, and its transition probability matrix $P$ satisfies
\begin{align}\label{expr:P:from:pi:Q}
    P((s,a),(s',a'))=\pi(a'|s')Q(s'|s,a)\quad\forall s,s'\in\mc S,\;a,a'\in\mc A.
\end{align}
\end{proposition}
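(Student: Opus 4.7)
The plan is to apply the chain rule of conditional probability to decompose
$$\mathbb{P}_{\pi,Q}\bigl(X_{t+1}=(s_{t+1},a_{t+1}) \,\big|\, X_t=(s_t,a_t),\ldots,X_1=(s_1,a_1)\bigr)$$
into a state-transition factor and an action-selection factor, and then to invoke the defining conditionals \eqref{def:Q:pi} and \eqref{def:pi:pi} to identify these factors with $Q$ and $\pi$ respectively. Since $X_t=(S_t,A_t)$, conditioning on $(X_1,\ldots,X_t)$ is the same as conditioning on $(S_1,A_1,\ldots,S_t,A_t)$, which is precisely the form required to apply the two given defining conditionals.

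Concretely, I would first use the identity $\mathbb{P}(U,V\mid W)=\mathbb{P}(V\mid U,W)\,\mathbb{P}(U\mid W)$ with $U=\{S_{t+1}=s_{t+1}\}$, $V=\{A_{t+1}=a_{t+1}\}$ and $W$ the event of the full history $(S_1,A_1,\ldots,S_t,A_t)=(s_1,a_1,\ldots,s_t,a_t)$, to obtain
\begin{align*}
&\mathbb{P}_{\pi,Q}\bigl(X_{t+1}=(s_{t+1},a_{t+1}) \,\big|\, X_t=(s_t,a_t),\ldots,X_1=(s_1,a_1)\bigr) \\
&\qquad = \mathbb{P}_{\pi,Q}\bigl(A_{t+1}=a_{t+1} \,\big|\, S_{t+1}=s_{t+1},S_t=s_t,A_t=a_t,\ldots\bigr) \\
&\qquad\quad\cdot \mathbb{P}_{\pi,Q}\bigl(S_{t+1}=s_{t+1} \,\big|\, S_t=s_t,A_t=a_t,\ldots,S_1=s_1,A_1=a_1\bigr).
\end{align*}
The second factor equals $Q(s_{t+1}\mid s_t,a_t)$ by \eqref{def:Q:pi}, and the first factor equals $\pi(a_{t+1}\mid s_{t+1})$ by \eqref{def:pi:pi}. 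Multiplying the two yields the right-hand side of \eqref{expr:P:from:pi:Q}, and crucially this product depends on the history only through the pair $(s_t,a_t)=X_t$, so the Markov property follows.

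Finally, time-homogeneity is immediate from the observation that the kernels $\pi$ and $Q$ themselves do not depend on~$t$, so the computed transition probability is a function of $(X_t,X_{t+1})$ alone; this justifies denoting it $P((s,a),(s',a'))$. I do not anticipate a genuine technical obstacle here: the argument is essentially a verification, and the only point that requires care is lining up the conditioning $\sigma$-algebras so that the hypotheses of \eqref{def:Q:pi} and \eqref{def:pi:pi} apply verbatim, which is handled by the chain-rule split described above.
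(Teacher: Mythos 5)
Your proposal is correct and follows essentially the same route as the paper's proof: split the joint conditional via the chain rule into an action-selection factor and a state-transition factor, identify them with $\pi$ and $Q$ using~\eqref{def:pi:pi} and~\eqref{def:Q:pi}, and observe that the product depends on the history only through $(s_t,a_t)$ and not on $t$. No gaps.
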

\begin{proof}
By the construction of $\mb P_{\pi,Q},$ 
we have for all $t\in\mb N$ that
\begin{align*}
&\mb P_{\pi,Q}(X_{t+1}=x_{t+1}|X_t=x_t,X_{t-1}=x_{t-1},
    \ldots,X_1=x_1)\\
    &\qquad=\mb P_{\pi,Q}(S_{t+1}=s_{t+1},A_{t+1}=a_{t+1}|S_t=s_t,A_t=a_t,\ldots,S_1=s_1,A_1=a_1)
    \\&\qquad=\mb P_{\pi,Q}(A_{t+1}=a_{t+1}|S_{t+1}=s_{t+1},S_t=s_t,A_t=a_t,\ldots,S_1=s_1,A_1=a_1)\\&\quad\qquad\;\times\mb P_{\pi,Q}(S_{t+1}=s_{t+1}|S_t=s_t,A_t=a_t,\ldots,S_1=s_1,A_1=a_1)
    \\&\qquad=\pi(a_{t+1}|s_{t+1})Q(s_{t+1}|s_t,a_t)\quad\forall s_1,\ldots,s_{t+1}\in\mc S,\; a_1,\ldots,a_{t+1}\in\mc A,
\end{align*}
where the second equality follows from Bayes' law, while the third equality exploits~\eqref{def:Q:pi} and~\eqref{def:pi:pi}. Thus, the stochastic process $\{X_t\}_{t=1}^\infty$ satisfies the Markov property, and its transition probability matrix satisfies~\eqref{expr:P:from:pi:Q}. Since $\pi$ and $Q$ are independent of $t,$ the Markov chain is time-homogeneous.
\end{proof}

Proposition~\ref{prop:Xt:MC:ergo} implies that the triple $(\mc X, P,\gamma)$ induces a Markov chain with initial distribution $\gamma\in\Delta(\mc X)$ defined through $\gamma(s,a)=\eta(s)\pi(a,s)$ for all~$s\in\mc S$ and~$a\in\mc A$. For this Markov chain to be irreducible, we require that~$\pi>0$. The family of all stationary policies with this property is denoted by~$\Pi_0$. In addition, we require that the directed graph $(\mc X,\mc E)$ with edge set
\[
    \mc E= \left\{((s,a),(s',a'))\in(\mc S\times\mc A)^2 \,:\, Q(s'|s,a)>0 \right\}
\]
is strongly connected. The family of all transition kernels with this property is denoted by~$\mc Q_0$. The following lemma is elementary, and therefore its proof is omitted.



\begin{lemma}\label{lem:irreducibleity:pi:Q:X_t}
If $\pi\in\Pi_0$ and $Q\in\mc Q_0,$ then the Markov chain~$\{X_t\}_{t=1}^\infty$ is irreducible under~$\mb P_{\pi,Q}$. Conversely, if $\pi\in\Pi\backslash\Pi_0$ or $Q\in\mc Q\backslash\mc Q_0,$ then the Markov chain~$\{X_t\}_{t=1}^\infty$ is not irreducible under~$\mb P_{\pi,Q}$.
\end{lemma}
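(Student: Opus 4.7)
The proof is essentially a graph-theoretic computation built on top of the transition matrix formula~\eqref{expr:P:from:pi:Q} from Proposition~\ref{prop:Xt:MC:ergo}. Recall that a finite Markov chain is irreducible if and only if the directed graph on its state space whose edges are the pairs with positive transition probability is strongly connected. So the plan is to compare the edge set of~$P$ with the edge set~$\mc E$ of the graph built from~$Q$ alone.

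\textbf{Forward direction.} Assume $\pi\in\Pi_0$ and $Q\in\mc Q_0$. By~\eqref{expr:P:from:pi:Q}, we have $P((s,a),(s',a'))=\pi(a'|s')Q(s'|s,a)$, and since $\pi(a'|s')>0$ for every $s',a'$ by the definition of $\Pi_0$, this product is strictly positive if and only if $Q(s'|s,a)>0$, i.e.\ if and only if $((s,a),(s',a'))\in\mc E$. Hence the edge set of the transition graph of~$P$ coincides exactly with~$\mc E$, which is strongly connected by the assumption $Q\in\mc Q_0$. Strong connectedness of the transition graph is equivalent to irreducibility of $\{X_t\}_{t=1}^\infty$ under $\mb P_{\pi,Q}$.

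\textbf{Converse direction.} I would prove the contrapositive: if $\pi\in\Pi\setminus\Pi_0$ or $Q\in\mc Q\setminus\mc Q_0$, then the chain is not irreducible. There are two subcases.

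\emph{Subcase (i): $\pi\notin\Pi_0$.} Then there exist $\bar s\in\mc S$ and $\bar a\in\mc A$ with $\pi(\bar a|\bar s)=0$. Formula~\eqref{expr:P:from:pi:Q} gives $P((s,a),(\bar s,\bar a))=\pi(\bar a|\bar s)Q(\bar s|s,a)=0$ for every $(s,a)\in\mc X$, so $(\bar s,\bar a)$ is unreachable from any other state in a single step, and iterating shows it is unreachable in any positive number of steps. Since $\pi(\bar a|\bar s)=0$ and $\pi(\cdot|\bar s)$ is a probability distribution, $|\mc A|\ge 2$, so $\mc X$ contains at least one other state-action pair, and communication with $(\bar s,\bar a)$ fails.

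\emph{Subcase (ii): $Q\notin\mc Q_0$.} Then the graph $(\mc X,\mc E)$ is not strongly connected, so there exist $(s,a),(s',a')\in\mc X$ for which no directed path in $(\mc X,\mc E)$ runs from $(s,a)$ to $(s',a')$. By~\eqref{expr:P:from:pi:Q}, the positivity of $P((u,b),(u',b'))$ implies the positivity of $Q(u'|u,b)$, so the edge set of the transition graph of~$P$ is contained in~$\mc E$. Hence no directed $P$-path runs from $(s,a)$ to $(s',a')$ either, and irreducibility fails.

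\textbf{Main obstacle.} There is no real obstacle; the statement is essentially a definitional unpacking. The only step that deserves care is the direction of the inclusion in subcase~(ii): one must observe that having $\pi$ only nonnegative (not strictly positive) can merely delete edges from~$\mc E$ in the transition graph of~$P$, it cannot add any, so a disconnection in~$\mc E$ persists for~$P$ regardless of whether $\pi\in\Pi_0$ or not. This is what makes the two subcases jointly cover the disjunction in the hypothesis.
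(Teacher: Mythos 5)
Your proof is correct, and since the paper explicitly omits the proof of this lemma as elementary, your graph-theoretic unpacking of~\eqref{expr:P:from:pi:Q} --- the $P$-transition graph equals $(\mc X,\mc E)$ when $\pi>0$ and is contained in $\mc E$ in general, plus the zero-column argument when some $\pi(\bar a|\bar s)=0$ --- is exactly the intended argument. No gaps; the care you take with the edge-set inclusion in subcase~(ii) and with $|\mc A|\ge 2$ in subcase~(i) covers the only points that could trip one up.
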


As the Markov chain induced by~$\pi\in\Pi_0$ and~$Q\in\mc Q_0$ is irreducible, we know from Section~\ref{sec:PMDI} that it admits a unique stationary distribution~$\mu\in\Delta(\mc S\times\mc A)$ as well as a unique stationary doublet distribution $\theta\in\Delta((\mc S\times\mc A)^2)$. In addition, $\pi$ and~$Q$ induce a unique stationary state-action-next-state distribution $\xi\in\Delta(\mc S\times\mc A \times \mc S)$ defined through $\xi(s,a,s')=\sum_{a'\in\mc A} \theta((s,a),(s',a'))$ for all $s,s'\in\mc S$ and~$a\in \mc A$. Recall from Section~\ref{sec:PMDI} that~$\Theta_0$ denotes the family of doublet distributions corresponding to irreducible Markov chains on~$\mc X=\mc S\times\mc A$. We emphasize, however, that not every Markov chain on~$\mc X$ with doublet distribution~$\theta\in\Theta_0$ has a transition probability matrix of the form~\eqref{expr:P:from:pi:Q}. Instead, the state-action-next-state distribution~$\xi$, which has fewer degrees of freedom than~$\theta$, provides sufficient information to reconstruct~$\pi$ and~$Q$. To see this, note first that
\begin{equation}\label{xi:alt:pi:Q}
    \xi(s,a,s')=Q(s'|s,a)\mu(s,a) \quad \forall s,s'\in\mc S,\; a,a'\in\mc A
\end{equation}
and that $\mu(s,a) = \sum_{s'\in \mc S} \xi(s,a,s')$ for all $s\in\mc S$ and~$a\in\mc A$. Hence, we have
\begin{subequations}
\label{eq:q-pi-from-xi}
\begin{equation}\label{eq:q-from-xi}
        Q(s'|s,a) = \frac{\xi(s,a,s')}{\mu(s,a)} = \frac{\xi(s,a,s')}{\sum_{\tilde s\in\mc S}\xi(s,a,\tilde s)}\quad \forall s,s'\in\mc S,\; a\in\mc A.
\end{equation}
In addition, the defining equation of the stationary distribution~$\mu$ implies that
\[
    \mu(s',a')= \sum_{s\in\mc S} \sum_{a\in\mc A} \mu(s,a) P((s,a), (s',a')) = \sum_{s\in\mc S} \sum_{a\in\mc A} \mu(s,a) \pi(a'|s')Q(s'|s,a)\quad \forall s'\in\mc S,\; a'\in\mc A,
\]
where the second equality follows from Proposition~\ref{prop:Xt:MC:ergo}. This readily implies that
\begin{equation}\label{eq:pi-from-xi}
        \pi(a'|s') = \frac{\sum_{s\in\mc S} \sum_{a\in\mc A} \mu(s,a)Q(s'|s,a)}{\mu(s',a')} = \frac{\sum_{s\in\mc S} \sum_{a\in\mc A} \xi(s,a,s')}{\sum_{s\in\mc S} \xi(s,a',s')} \quad \forall s'\in\mc S,\; a'\in\mc A,
\end{equation}
\end{subequations}
where the second equality exploits~\eqref{xi:alt:pi:Q}. In summary, \eqref{eq:q-pi-from-xi} shows that both~$\pi$ and~$Q$ can indeed be reconstructed from~$\xi$. Hence, there is a one-to-one correspondence between~$(\pi,Q)$ and~$\xi$. Without loss of generality, we can thus denote the probability measure governing the Markov chain $\{X_t\}_{t=1}^\infty$ by~$\mb P_\xi$ instead of~$\mb P_{\pi,Q}$. We prefer~$\xi$ to~$(\pi,Q)$ because it admits a simple estimator akin to~$\widehat\theta_T$. 

Note that $\xi$ has balanced marginals in the sense that
\begin{equation}
    \label{eq:balanced-marginals-xi}
    \sum_{s' \in \mc S} \sum_{ a\in\mc A}\xi(s, a, s') =  \sum_{s' \in \mc S} \sum_{ a\in\mc A}\xi(s', a, s)\quad \forall s\in\mc S.
\end{equation}
In the following we use~$\Xi$ to denote the set of all~$\xi\in\Delta(\mc S\times\mc A\times\mc S)$ that satisfy~\eqref{eq:balanced-marginals-xi}. We emphasize that not every~$\xi\in\Xi$ is induced by an irreducible Markov chain. To see this, recall that if~$\pi\in\Pi_0$ and~$Q\in\mc Q_0$, then the Markov chain $\{X_t\}_{t=1}^\infty$ is irreducible, and thus~$\mu>0$. By~\eqref{xi:alt:pi:Q}, this ensures that the edge set~$\mc E$ admits the equivalent representation
\[
    \mc E= \left\{((s,a),(s',a'))\in(\mc S\times\mc A)^2 \,:\, \xi(s,a,s')>0 \right\}.
\]
Hence, if~$\pi\in\Pi_0$ and~$Q\in\mc Q_0$, then $(\mc X,\mc E)$ must represent a strongly connected graph. We use $\Xi_0\subseteq \Xi$ to denote the set of all $\xi\in\Xi$ with this property. By what has been said above, it is now easy to prove that every $\pi\in\Pi_0$ and~$Q\in\mc Q_0$ induces a unique~$\xi\in\Xi_0$ and vice versa.

Given a finite state-action trajectory $\{ (S_t,A_t)\}_{t=1}^T$, a natural estimator for~$\xi$ is thus the empirical state-action-next-state distribution $\qhat\in\Delta(\mc S\times\mc A\times\mc S)$ defined through
\begin{equation}
    \label{MDP:estimator}
    \qhat(s,a,s')=\frac{1}{T} \left( \sum_{t=1}^{T-1} \indic{{S}_{t}=s,A_t=a,{S}_{t+1}=s'} + \indic{{S}_{T}=s,A_T=a,{S}_1=s'}\right)\quad \forall s,s'\in\mc S,\;a\in\mc A.
\end{equation}
The ghost transition from $(S_T, A_T)$ to~$S_1$ in the above definition ensures that the estimator $\qhat$ has balanced marginals and thus lies in~$\Xi$. One readily realizes that $\qhat=F(\thetahat)$, where~$\thetahat$ denotes the empirical doublet distribution defined in~\eqref{MC:estimator}, and $F:\Theta\to \Xi$ satisfies
\begin{equation}
    \label{eq:F-definition}
    F(\theta)(s,a,s')=\sum_{a'\in\mc A} \theta((s,a), (s',a'))\quad \forall s,s'\in\mc S,\; a\in\mc A.   
\end{equation}

We henceforth refer to elements of~$\Xi_0$ as models and elements of~$\Xi$ as estimator realizations. The discrepancy between a model $\xi\in\Xi_0$ and an estimator realization $\xi'\in\Xi$ is naturally measured by a conditional relative entropy tailored to MDPs.

\begin{definition}[Conditional relative entropy for MDPs] 
\label{def:Lc}
The conditional relative entropy of $\xi'\in\Xi$ with respect to $\xi\in \Xi_0$ is defined as
\begin{equation}\label{def:Dm:Theta}
\begin{split}
	\Lc{\xi'}{\xi}&=\sum_{s,s'\in\mc S, a\in\mc A}\xi'(s,a,s')\left(\log \frac{\xi'(s,a,s')}{\sum_{\tilde s\in\mc S,\tilde a\in\mc A}\xi'(s,\tilde a,\tilde s)}-\log \frac{\xi(s,a,s')}{\sum_{\tilde s\in\mc S,\tilde a\in\mc A}\xi(s,\tilde a,\tilde s)}\right).
\end{split}
\end{equation}
\end{definition}
Due to our conventions for the logarithm, $\Lc{\xi'}{\xi}$ is finite whenever $\supp(\xi')\subseteq\supp(\xi)$ and evaluates to $+\infty$ otherwise. In addition, $\Lc{\xi'}{\xi}$ admits a unique lower semi-continuous extension to $\Xi\times\Xi$, which is obtained by setting 
$$
    \Lc{\xi'}{\xi}=\lim_{\delta \downarrow 0}\inf_{(\zeta',\zeta)\in\Xi\times\Xi_0} \left\{\Lc{\zeta'}{\zeta}:\|(\zeta',\zeta)-(\xi',\xi)\|\le\delta \right\}\quad\forall (\xi',\xi)\in\Xi\times(\Xi\backslash\Xi_0);
$$
see~\citep[p.~1996]{ref:Sutter-19} and \citep[Definition 1.5]{rockafellar2009variational}. In the following, we will always mean this lower semi-continuous extension to $\Xi\times\Xi$ when referring to~$\Lc{\xi'}{\xi}$. The next proposition shows that the conditional relative entropy~$\mathsf{D}_{\mathsf{mdp}}$ for MDPs is closely related to the conditional relative entropy~$\mathsf{D}_{\mathsf{mc}}$ for Markov chains introduced in Definition~\ref{def:conditional_relative_entropy}.

\begin{proposition}[Relation between $\mathsf{D}_{\mathsf{mc}}$ and $\mathsf{D}_{\mathsf{mdp}}$]
\label{prop:Dmdp-Dmc-relation}
If~$G: \Xi\to \Theta$ is defined through
\begin{equation}
    G(\xi) ((s,a),(s',a')) = \left\{\begin{array}{ll}
            \frac{\sum_{\tilde s\in\mc S}\xi(s',a',\tilde s)}{\sum_{\tilde s\in\mc S,\tilde a\in \mc A}\xi(s',\tilde a,\tilde s)} \, \xi(s,a,s') & \text{if } \sum_{\tilde s\in\mc S}\xi(s',a',\tilde s)>0
            \\
            0 & \text{if } \sum_{\tilde s\in\mc S}\xi(s',a',\tilde s)=0
        \end{array} \right.
\end{equation}
for all $s,s'\in\mc S$ and $a,a'\in\mc A$, then $\Dc{G(\xi')}{G(\xi)}=\Lc{\xi'}{\xi}$ for all~$\xi'\in\Xi$ and $\xi\in\Xi_0$. 
\end{proposition}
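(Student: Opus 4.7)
The plan is to show that $G(\xi)$ is exactly the doublet distribution of the Markov chain on $\mc X=\mc S\times\mc A$ induced by the pair $(\pi,Q)$ reconstructed from $\xi$ via~\eqref{eq:q-pi-from-xi}, after which the identity reduces to matching two chain-rule-type decompositions of the relative entropies. Writing $\mu'(s,a)=\sum_{\tilde s}\xi'(s,a,\tilde s)$, $\mu'(s)=\sum_{\tilde a}\mu'(s,\tilde a)$, $Q'(s'|s,a)=\xi'(s,a,s')/\mu'(s,a)$, and $\pi'(a'|s')=\mu'(s',a')/\mu'(s')$ (with the convention $0/0=0$ that is consistent with the piecewise definition of $G$), the formula for $G$ becomes $G(\xi')((s,a),(s',a'))=\pi'(a'|s')\,\xi'(s,a,s')=\mu'(s,a)\,\pi'(a'|s')\,Q'(s'|s,a)$, which is precisely the doublet distribution formula from Proposition~\ref{prop:Xt:MC:ergo}. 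For $\xi\in\Xi_0$ the reconstructed pair $(\pi,Q)\in\Pi_0\times\mc Q_0$, so Lemma~\ref{lem:irreducibleity:pi:Q:X_t} gives $G(\xi)\in\Theta_0$; for $\xi'\in\Xi$ the balanced marginals of $G(\xi')$ follow from those of $\xi'$, so $G(\xi')\in\Theta$. Hence both $\Dc{G(\xi')}{G(\xi)}$ and $\Lc{\xi'}{\xi}$ fall within the domain where Definitions~\ref{def:conditional_relative_entropy} and~\ref{def:Lc} apply directly, without invoking the lower semi-continuous extension.

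Next I would compute the first-state marginal of $G(\xi')$: since $\sum_{\tilde a}\pi'(\tilde a|\tilde s)=1$ whenever $\mu'(\tilde s)>0$, and since balanced marginals of $\xi'$ force $\xi'(s,a,\tilde s)=0$ whenever $\mu'(\tilde s)=0$, one obtains $\sum_{\tilde s,\tilde a}G(\xi')((s,a),(\tilde s,\tilde a))=\mu'(s,a)$, and analogously for $\xi$. Substituting into Definition~\ref{def:conditional_relative_entropy} and factoring the logarithm additively yields
\begin{align*}
\Dc{G(\xi')}{G(\xi)}&=\sum_{s,a,s',a'}\pi'(a'|s')\,\xi'(s,a,s')\log\tfrac{Q'(s'|s,a)}{Q(s'|s,a)}\\
&\quad+\sum_{s,a,s',a'}\pi'(a'|s')\,\xi'(s,a,s')\log\tfrac{\pi'(a'|s')}{\pi(a'|s')}.
\end{align*}
Summing $a'$ out in the first line (using $\sum_{a'}\pi'(a'|s')=1$ wherever the factor $\xi'(s,a,s')$ is nonzero) and $(s,a)$ out in the second using the balanced-marginal identity $\sum_{s,a}\xi'(s,a,s')=\mu'(s')$ together with $\pi'(a'|s')\mu'(s')=\mu'(s',a')$, this collapses to $\sum_{s,a,s'}\xi'(s,a,s')\log(Q'(s'|s,a)/Q(s'|s,a))+\sum_{s',a'}\mu'(s',a')\log(\pi'(a'|s')/\pi(a'|s'))$.

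Finally, performing the analogous split on $\Lc{\xi'}{\xi}$ using the marginal $\sum_{\tilde s,\tilde a}\xi'(s,\tilde a,\tilde s)=\mu'(s)$ and the factorization $\xi'(s,a,s')/\mu'(s)=\pi'(a|s)Q'(s'|s,a)$ yields the same first term and a second term equal to $\sum_{s,a}\mu'(s,a)\log(\pi'(a|s)/\pi(a|s))$, which matches the expression above after relabeling the dummy indices $(s',a')\leftrightarrow(s,a)$. The principal bookkeeping obstacle is the piecewise definition of $G$: one must confirm that the two collapses above remain valid even when some $\mu'(\tilde s)$ vanishes. This is resolved by the observation that $\mu'(\tilde s)=0$ forces $\xi'(s,a,\tilde s)=0$ for all $s,a$ by balanced marginals, so the indices where $\pi'$ is not naturally defined contribute zero on both sides, and the convention $0\log(0/\cdot)=0$ absorbs the remaining edge cases.
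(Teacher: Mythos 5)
Your proof is correct and follows essentially the same route as the paper's: a direct algebraic verification that the two conditional relative entropies coincide, based on the observation that $G(\xi')((s,a),(s',a'))=\pi'(a'|s')\,\xi'(s,a,s')$, with the zero-mass and infinite cases absorbed by the logarithm conventions and the balanced-marginal argument. The only organizational difference is that you split both $\Dc{G(\xi')}{G(\xi)}$ and $\Lc{\xi'}{\xi}$ into a kernel term plus a policy term (the decomposition the paper records later as~\eqref{Dm:alt:expression}), whereas the paper expands $\Dc{G(\xi')}{G(\xi)}$ into four sums and cancels two of them to land directly on Definition~\ref{def:Lc}; the substance is identical.
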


An elementary calculation reveals that $\theta=G(\xi)$ belongs to $\Delta((\mc S\times\mc A)^2)$ and satisfies~\eqref{eq:balanced-marginals} for any $\xi\in\Xi$. Therefore, $\Theta$ represents indeed the codomain of~$G$. Note also that if $\xi\in\Xi_0$ and if we define the policy~$\pi$ via~\eqref{eq:pi-from-xi}, then $G(\xi) ((s,a),(s',a')) = \pi(a'|s')\xi(s,a,s')$ for all $s,s'\in\mc S$ and $a,a'\in\mc A$.

\begin{proof}{\textbf{of Proposition~\ref{prop:Dmdp-Dmc-relation}}}
Select any~$\xi\in\Xi_0$ and~$\xi'\in \Xi$, and denote the stationary state-action and state distributions corresponding to~$\xi$ as~$\mu$ and~$\mu_\mc S$, respectively. Thus, we have
\[
    \mu(s,a)=\sum_{\tilde s\in\mc S} \xi(s,a,\tilde s) \quad \text{and} \quad \mu_{\mc S}(s)=\sum_{\tilde s\in\mc S} \sum_{\tilde a \in\mc A} \xi(s,\tilde a,\tilde s) \quad \forall s\in\mc S,\; a\in\mc A.
\]
The stationary state-action distribution~$\mu'$ and the stationary state distribution~$\mu'_\mc S$ corresponding to~$\xi'$ are defined analogously. We prove the claim first under the assumption that $\supp(\xi')\subseteq\supp(\xi)$. This implies that $\supp(G(\xi'))  \subseteq \supp(G(\xi)).$ 
By the definitions of $\mathsf{D_{mc}}$ and $G$, we thus have
\begin{align*}
	&\Dc{G(\xi')}{G(\xi)} 
    \\&=\sum_{s,s'\in\mc S, a,a'\in\mc A} \frac{\mu'(s',a')}{\mu'_{\mc S}(s')} \xi'(s,a,s') \Bigg( \log \Bigg(\frac{\mu'(s',a')}{\mu'_{\mc S}(s')} \frac{\xi'(s,a,s')}{\mu'(s,a)}\Bigg)
        - \log \Bigg(\frac{\mu(s',a')}{\mu_{\mc S}(s')} \frac{\xi(s,a,s')}{\mu(s,a)} \Bigg) \Bigg)
    \\&=\sum_{s,s'\in\mc S, a\in\mc A}  \xi'(s,a,s') \log  \frac{\xi'(s,a,s')}{\xi(s,a,s')} 
        +  \!\!\!\!
        \sum_{s'\in\mc S, a'\in\mc A} \mu'(s',a') \log \frac{\mu'(s',a')}{\mu(s',a')}
        - \!\!\!\!
        \sum_{s\in\mc S, a\in\mc A } \mu'(s,a) \log \frac{\mu'(s,a)}{\mu(s,a)}
        \\& \qquad-
        \sum_{s'\in\mc S } \mu'_{\mc S}(s')\log \frac{\mu'_{\mc S}(s')}{\mu_{\mc S}(s')}
    \\&=\sum_{s,s'\in\mc S, a\in\mc A}  \xi'(s,a,s') \log  \frac{\xi'(s,a,s')}{\xi(s,a,s')}
        -
        \sum_{s\in\mc S } \mu'_{\mc S}(s)\log \frac{\mu'_{\mc S}(s)}{\mu_{\mc S}(s)}
    \\&=\sum_{s,s'\in\mc S, a\in\mc A}\xi'(s,a,s')\left(\log \frac{\xi'(s,a,s')}{\sum_{\tilde s\in\mc S,\tilde a\in\mc A}\xi'(s,\tilde a,\tilde s)}-\log \frac{\xi(s,a,s')}{\sum_{\tilde s\in\mc S,\tilde a\in\mc A}\xi(s,\tilde a,\tilde s)}\right),
\end{align*}
where we have repeatedly used the convention that $0\log(0/q)=0$ for all $q\geq 0$. Assume next that $\supp(\xi')\nsubseteq \supp(\xi)$ such that $\supp(G(\xi'))\nsubseteq \supp(G(\xi))$. In this case, we find 
\[
    \Dc{G(\xi')}{G(\xi)}=\infty=\Lc{\xi'}{\xi}
\]
thanks to our convention that $p\log(p/0)=\infty$ for all $p>0$. Hence, the claim follows.
\end{proof}

Equipped with Proposition~\ref{prop:Dmdp-Dmc-relation}, we are now ready to prove that the empirical state-action-next-state distribution~$\qhat$ satisfies a large deviations principle with rate function $\mathsf{D}_{\mathsf{mdp}}$.

\begin{theorem}[Large deviations principle for MDPs]\label{thm:LDP:q}
For all $\xi\in \Xi_0$ and Borel sets $\mathcal{D} \subseteq \Xi$, the empirical state-action-next-state distribution $\qhat$ defined in~\eqref{MDP:estimator} satisfies
$$
\begin{aligned}
-\inf_{\xi'\in\mathrm{int}\mc D}\Lc{\xi'}{\xi} & \leq \liminf _{T \rightarrow \infty} \frac{1}{T} \log \mathbb{P}_\xi\left(\qhat \in \mathcal{D}\right) \\
& \leq \limsup_{T \rightarrow \infty} \frac{1}{T} \log \mathbb{P}_\xi\left(\qhat \in \mathcal{D}\right) \leq-\inf_{\xi'\in\mc D}\Lc{\xi'}{\xi}.
\end{aligned}
$$
\end{theorem}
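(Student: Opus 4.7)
The plan is to deduce the large deviations principle for $\widehat\xi_T$ from Theorem~\ref{thm:LDP} by exploiting the factorization $\widehat\xi_T=F(\widehat\theta_T)$, where $F:\Theta\to\Xi$ is the continuous linear map defined in~\eqref{eq:F-definition}. Since $\xi\in\Xi_0$ corresponds via~\eqref{eq:q-pi-from-xi} to a unique pair $(\pi,Q)\in\Pi_0\times\mc Q_0$, Proposition~\ref{prop:Xt:MC:ergo} and Lemma~\ref{lem:irreducibleity:pi:Q:X_t} imply that under $\mb P_{\xi}=\mb P_{\pi,Q}$ the state-action Markov chain $\{X_t\}$ is irreducible with stationary doublet distribution $G(\xi)\in\Theta_0$. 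Consequently $\mb P_{\xi}(\widehat\xi_T\in\mc D)=\mb P_{G(\xi)}(\widehat\theta_T\in F^{-1}(\mc D))$ for every Borel set $\mc D\subseteq\Xi$, so Theorem~\ref{thm:LDP} applied with $F^{-1}(\mc D)\subseteq\Theta$ yields
\[
-\!\!\!\inf_{\theta'\in\mathrm{int}\,F^{-1}(\mc D)}\!\!\!\Dc{\theta'}{G(\xi)}\le \liminf_{T\to\infty}\frac{1}{T}\log\mb P_{\xi}(\widehat\xi_T\in\mc D)\le \limsup_{T\to\infty}\frac{1}{T}\log\mb P_{\xi}(\widehat\xi_T\in\mc D)\le-\!\!\inf_{\theta'\in F^{-1}(\mc D)}\!\!\Dc{\theta'}{G(\xi)}.
\]
It remains to match these bounds with $-\inf_{\xi'\in\mathrm{int}\,\mc D}\Lc{\xi'}{\xi}$ and $-\inf_{\xi'\in\mc D}\Lc{\xi'}{\xi}$, respectively.

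The crux is the variational identity
\[
\inf_{\theta'\in F^{-1}(\{\xi'\})}\Dc{\theta'}{G(\xi)}=\Lc{\xi'}{\xi}\quad\forall\xi'\in\Xi,
\]
with the infimum attained at $\theta'=G(\xi')$. The direction $\le$ follows from $F\circ G=\mathrm{id}_\Xi$ (a direct check from~\eqref{eq:F-definition} and the definition of $G$) combined with Proposition~\ref{prop:Dmdp-Dmc-relation}, which gives $\Dc{G(\xi')}{G(\xi)}=\Lc{\xi'}{\xi}$. The harder direction $\ge$ asserts that $G(\xi')$ minimizes $\Dc{\cdot}{G(\xi)}$ on the fiber $F^{-1}(\{\xi'\})$. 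To prove it, factor any $\theta'\in F^{-1}(\{\xi'\})$ as $\theta'((s,a),(s',a'))=\xi'(s,a,s')\,\pi'(a'|s',s,a)$ with $\pi'$ a stochastic kernel, and similarly $G(\xi)((s,a),(s',a'))=\xi(s,a,s')\,\pi_{\xi}(a'|s')$ with $\pi_{\xi}$ recovered from $\xi$ via~\eqref{eq:pi-from-xi}. Using the elementary identities $\Dc{\theta'}{G(\xi)}=\mathsf{D}(\theta'\|G(\xi))-\mathsf{D}(\mu'\|\mu)$ and $\Lc{\xi'}{\xi}=\mathsf{D}(\xi'\|\xi)-\mathsf{D}(\mu'_{\mc S}\|\mu_{\mc S})$, where $\mu,\mu'$ and $\mu_{\mc S},\mu'_{\mc S}$ denote the state-action and state marginals of $\xi,\xi'$, and applying the chain rule for relative entropy to both differences, the gap $\Dc{\theta'}{G(\xi)}-\Lc{\xi'}{\xi}$ collapses to
\[
\sum_{s'\in\mc S}\mu'_{\mc S}(s')\left(\sum_{(s,a)\in\mc S\times\mc A}\frac{\xi'(s,a,s')}{\mu'_{\mc S}(s')}\mathsf{D}\bigl(\pi'(\cdot|s',s,a)\,\big\|\,\pi_{\xi}(\cdot|s')\bigr)-\mathsf{D}\bigl(\pi_{\xi'}(\cdot|s')\,\big\|\,\pi_{\xi}(\cdot|s')\bigr)\right).
\]
The balanced-marginals condition defining $\Theta$ forces $\sum_{s,a}\xi'(s,a,s')\,\pi'(\cdot|s',s,a)=\mu'(s',\cdot)=\mu'_{\mc S}(s')\,\pi_{\xi'}(\cdot|s')$, so the inner $(s,a)$-average of $\pi'(\cdot|s',s,a)$ equals $\pi_{\xi'}(\cdot|s')$. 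Jensen's inequality applied to the convex map $p\mapsto\mathsf{D}(p\,\|\,\pi_{\xi}(\cdot|s'))$ then renders each summand nonnegative, with equality when $\pi'(\cdot|s',s,a)$ is independent of $(s,a)$, i.e., when $\theta'=G(\xi')$.

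The variational identity immediately upgrades the Theorem~\ref{thm:LDP} upper bound:
\[
\inf_{\theta'\in F^{-1}(\mc D)}\Dc{\theta'}{G(\xi)}=\inf_{\xi'\in\mc D}\inf_{\theta'\in F^{-1}(\{\xi'\})}\Dc{\theta'}{G(\xi)}=\inf_{\xi'\in\mc D}\Lc{\xi'}{\xi}.
\]
For the lower bound, continuity of $F$ makes $F^{-1}(\mathrm{int}\,\mc D)$ open in $\Theta$ and hence contained in $\mathrm{int}\,F^{-1}(\mc D)$. Since $G(\xi')\in F^{-1}(\{\xi'\})\subseteq F^{-1}(\mathrm{int}\,\mc D)$ for every $\xi'\in\mathrm{int}\,\mc D$, Proposition~\ref{prop:Dmdp-Dmc-relation} gives $\inf_{\theta'\in\mathrm{int}\,F^{-1}(\mc D)}\Dc{\theta'}{G(\xi)}\le\Dc{G(\xi')}{G(\xi)}=\Lc{\xi'}{\xi}$, and taking the infimum over $\xi'\in\mathrm{int}\,\mc D$ completes the proof. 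The main obstacle is the variational identity of the second paragraph, where one must recognise that the balanced-marginals constraint defining $\Theta$ is precisely what forces $G(\xi')$ to be the fiberwise minimiser---a subtlety absent from classical i.i.d.\ Sanov-type proofs, and the reason the plain contraction principle does not immediately reduce $\mathsf{D_{mc}}$ to $\mathsf{D_{mdp}}$ without further work.
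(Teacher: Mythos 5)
Your proof is correct and follows essentially the same route as the paper: reduce the LDP for $\widehat\xi_T=F(\widehat\theta_T)$ to Theorem~\ref{thm:LDP} via the contraction principle (which you unpack by hand on the sets $F^{-1}(\mc D)$ and $F^{-1}(\mathrm{int}\,\mc D)$) and identify the contracted rate function with $\mathsf{D_{mdp}}$ by showing that $G(\xi')$ minimises $\Dc{\cdot}{G(\xi)}$ over the fiber $F^{-1}(\{\xi'\})$. The only substantive presentational difference is that the paper establishes this fiberwise minimality by invoking the data-processing inequality, whereas you derive it explicitly via the chain rule for relative entropy and Jensen's inequality applied to the balanced-marginal averages --- the same estimate in expanded form.
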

\begin{proof}
Fix any~$\xi\in\Xi_0$, and set $\theta=G(\xi)$. One readily verifies that $\xi=F(\theta)$, where $F$ is the transformation defined in~\eqref{eq:F-definition}. Recall also that $\qhat=F(\thetahat)$, where~$\qhat$ and $\thetahat$ are the empirical estimators for~$\xi$ and~$\theta$ defined in~\eqref{MDP:estimator} and~\eqref{MC:estimator}, respectively. We already know from Theorem~\ref{thm:LDP} that~$\thetahat$ satisfies a large deviations principle with rate function~$\mathsf{D}_{\mathsf{mc}}$. By the contraction principle~\citep[Theorem~4.2]{dembo2011large}, which applies because~$F$ is continuous, $\qhat=F(\thetahat)$ thus satisfies a large deviations principle with rate function $I(\xi',\xi) =\inf_{\theta'\in\Theta,F(\theta')=\xi'}\Dc{\theta'}{\theta}.$

It remains to be shown that $I(\xi',\xi)= \Lc{\xi'}{\xi}$ for all $\xi\in\Xi_0$ and~$\xi'\in\Xi$. To this end, set again $\theta=G(\xi)$, and use~$\mu$ and~$\mu'$ to denote the stationary state-action distributions corresponding to~$\xi$ and~$\xi'$, respectively. For any $\theta'\in\Theta$ with $ F(\theta')=\xi'$ we then have
\begin{align*}
    \Dc{\theta'}{\theta}& =\D{\theta'}{\theta}-\D{\mu'}{\mu} \\
    &\ge \D{G(F(\theta'))}{G(F(\theta))}-\D{\mu'}{\mu}
    \\&= \D{G(\xi')}{G(\xi)}-\D{\mu'}{\mu}
    \\&=\Dc{G(\xi')}{G(\xi)}=\Lc{\xi'}{\xi}
\end{align*}
where the first equality exploits the definition of $\mathsf{D}_{\mathsf{mc}} $ (see Definition~\ref{def:conditional_relative_entropy}), and the inequality follows from the data processing inequality \citep[Lemma~3.11]{csiszar2011information}.
The second equality holds because $F(\theta')=\xi'$ by assumption and because $F(\theta)=F(G(\xi))=\xi$ for any~$\xi\in\Xi_0$. The third equality exploits again the definition of $\mathsf{D}_{\mathsf{mc}}$, and the fourth equality follows from Proposition~\ref{prop:Dmdp-Dmc-relation}. Hence, the infimum in the definition of $I(\xi',\xi)$ is attained by $\theta'= G(\xi')\in\Theta$. This in turn implies that $I(\xi',\xi) = \Lc{\xi'}{\xi}$. This observation completes the proof.
\end{proof}

The next corollary akin to Corollary~\ref{ldp:finite} establishes a finite-sample version of Theorem~\ref{thm:LDP:q}.

\begin{corollary}[Finite-sample version of Theorem~\ref{thm:LDP:q}]\label{ldp:finite:q}
For all $\xi \in \Xi_0$ and Borel sets $\mathcal{D}\subseteq \Xi$, the empirical doublet distribution $\qhat $ defined in~\eqref{MDP:estimator} satisfies
$$\begin{aligned}
\frac{1}{T}\log  \mathbb{P}_{\xi}\left(\qhat\in \mc D\right)
\leq \frac{1}{T}(\log T+\bar c+ d^2\log (T+1) )-\inf_{\xi'\in \mc D}{\Lc{\xi'}{\xi}} \quad \forall T \in \mb N,
\end{aligned}$$ 
where $\bar c>0$ is a universal constant that depends only on~$\xi$.
\end{corollary}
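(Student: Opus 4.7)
The plan is to reduce the corollary to the already-established Markov chain finite-sample bound in Corollary~\ref{ldp:finite} by exploiting the push-forward relationship $\qhat = F(\thetahat)$, exactly as in the proof of Theorem~\ref{thm:LDP:q}. Fix $\xi \in \Xi_0$ and set $\theta = G(\xi)$; by the remark following Proposition~\ref{prop:Dmdp-Dmc-relation}, $\theta\in\Theta_0$ and $F(\theta)=\xi$. Since $\mb P_\xi$ and $\mb P_\theta$ denote the same underlying probability measure on the canonical space, and since $\qhat = F(\thetahat)$ pathwise by construction of the two empirical estimators, one has $\mb P_\xi(\qhat\in\mc D) = \mb P_\theta(\thetahat \in F^{-1}(\mc D))$, where $F^{-1}(\mc D)=\{\theta'\in\Theta:F(\theta')\in\mc D\}$ is Borel because $F$ is continuous.

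The next step is to apply Corollary~\ref{ldp:finite} to the Markov chain estimator~$\thetahat$ with the Borel set $F^{-1}(\mc D)$, which yields
\begin{equation*}
    \frac{1}{T}\log \mb P_\theta\!\left(\thetahat \in F^{-1}(\mc D)\right)
    \le \frac{1}{T}\left(\log T + \bar c + d^2\log(T+1)\right) - \inf_{\theta' \in F^{-1}(\mc D)} \Dc{\theta'}{\theta},
\end{equation*}
where $\bar c>0$ depends only on~$\theta$, and hence only on~$\xi$ because $\theta = G(\xi)$.

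The final step is to convert the infimum. Decomposing by the level sets of~$F$ gives
\begin{equation*}
    \inf_{\theta' \in F^{-1}(\mc D)} \Dc{\theta'}{\theta}
    = \inf_{\xi' \in \mc D}\; \inf_{\theta' \in \Theta,\, F(\theta')=\xi'} \Dc{\theta'}{\theta}
    = \inf_{\xi'\in\mc D} \Lc{\xi'}{\xi},
\end{equation*}
where the last equality is precisely the identity $I(\xi',\xi)=\Lc{\xi'}{\xi}$ already proved inside Theorem~\ref{thm:LDP:q} via the data processing inequality and Proposition~\ref{prop:Dmdp-Dmc-relation}. Combining the two displays yields the claimed bound.

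The only genuinely subtle point is the equality of infima, because some $\xi' \in \mc D$ may lie in $\Xi\setminus\Xi_0$ and the rate function is then defined by its lower semi-continuous extension; the identity still holds since both sides use the same extension and the attainer $\theta' = G(\xi')$ from the proof of Theorem~\ref{thm:LDP:q} remains in~$\Theta$. Everything else is a direct application of results already in hand, so no new technical obstacle is expected.
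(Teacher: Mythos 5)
Your proposal is correct and follows essentially the same route as the paper: reduce to Corollary~\ref{ldp:finite} via the pathwise identity $\qhat=F(\thetahat)$, apply that bound to the Borel set $F^{-1}(\mc D)$, and convert the infimum using the identity $\inf_{\theta'\in\Theta,\,F(\theta')=\xi'}\Dc{\theta'}{\theta}=\Lc{\xi'}{\xi}$ established in the proof of Theorem~\ref{thm:LDP:q} via Proposition~\ref{prop:Dmdp-Dmc-relation}. The only cosmetic difference is that you justify the Borel measurability of $F^{-1}(\mc D)$ by continuity of $F$ while the paper invokes its linearity, and your closing remark about the lower semi-continuous extension is not actually needed here since the second argument $\xi$ lies in $\Xi_0$, so both rate functions are defined directly rather than by extension.
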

\begin{proof}
Fix any~$\xi\in\Xi_0$ and Borel set~$\mc D\subseteq \Xi$. Set $\theta=G(\xi)$, and let $F$ be the transformation defined in~\eqref{eq:F-definition}. As~$F$ is linear,  $F^{-1}(\mc D)$ is a Borel subset of~$\Theta$. 
As $\qhat=F(\thetahat)$, we may conclude that
\begin{align*}
    \frac{1}{T}\log\mathbb{P}_{\xi}\left(\qhat\in \mc D\right) & = \frac{1}{T}\log\mathbb{P}_{\xi}\left(\thetahat\in F^{-1}(\mc D)\right) \\
    &\leq \frac{1}{T}(\log T+\bar c+ d^2\log (T+1) )-\inf_{\theta'\in F^{-1}(\mc D)}\Dc{\theta'}{\theta} \\
    &=\frac{1}{T}(\log T+\bar c+ d^2\log (T+1) )-\inf_{\xi'\in\mc D}\; \inf_{\theta'\in\Theta, F(\theta')=\xi'}\Dc{\theta'}{\theta} \\
    &=\frac{1}{T}(\log T+\bar c+ d^2\log (T+1) )-\inf_{\xi'\in \mc D}{\Lc{\xi'}{\xi}} \quad \forall T\in\mb N,
\end{align*}
where the inequality follows from Corollary~\ref{ldp:finite}, and the last equality uses Proposition~\ref{prop:Dmdp-Dmc-relation}.
\end{proof}

The conditional relative entropy~$\mathsf{D}_{\mathsf{mdp}}$ will be instrumental for modeling distribution shifts. Besides representing the rate function of a large deviations principle, it inherits many useful properties from~$\mathsf{D}_{\mathsf{mc}}$ including level compactness, radial monotonicity and coercivity. The properties of~$\mathsf{D}_{\mathsf{mc}}$ are established in \citep[Proposition~5.1]{ref:Sutter-19}. The corresponding properties of~$\mathsf{D}_{\mathsf{mdp}}$ can be established similarly by adapting the proofs for~$\mathsf{D}_{\mathsf{mc}}$ in the obvious way.

\begin{lemma}[Properties of $\mathsf{D}_{\mathsf{mdp}}$]\label{lem:properties:Dm}
The following hold.
\begin{enumerate}[label = (\roman*)]
\item  \textbf{Lower semicontinuity.} $\Lc{\xi'}{\xi}$ is lower semicontinuous on~$\Xi\times\Xi$;\label{lem:continuity:Dm}
\item \textbf{Level-compactness.} $\{(\xi',\xi) \in\Xi\times \Xi: \Lc{\xi'}{\xi} \leq \rho\}$ is compact for every $\rho \geq 0$. \label{lem:level:compact:Dm}
\item \textbf{Coerciveness.} $\lim_{\zeta\in\Xi,\zeta\to\xi}\Lc{\xi'}{\zeta }=\infty$ for all $\xi'\in\Xi_0$ and $\xi\in\Xi$ with $\supp(\xi')\nsubseteq \supp(\xi)$.\label{lem:coerc:Dm}
\item
\textbf{Radial monotonicity in $\xi$}. $
\cl\{\xi \in \Xi_0: \Lc{\xi'}{\xi}<\rho\}=\{\xi \in \Xi: \Lc{\xi'}{\xi}\leq \rho\}$ for all $\xi' \in \Xi$ and $\rho>0$;\label{lem:radial:monotone:Dm}
\item \textbf{Continuity of the sublevel set mapping.} The set-valued mapping $\Gamma:\Xi \rightrightarrows \Xi$ defined through $\Gamma(\xi')=\{ \xi\in\Xi :\Lc{\xi'}{\xi}\leq \rho\}$ is continuous in~$\xi'\in \Xi$ for every $\rho>0$. \label{Dm:set:continuous}
\item \textbf{Convexity in~$\xi'$.} $\Lc{\xi'}{\xi}$ is convex in~$\xi'\in\Xi$ for every fixed $\xi\in\Xi$. \label{Dm:convexity}
\end{enumerate}
\end{lemma}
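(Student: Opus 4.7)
My plan is to prove the six properties of $\mathsf{D}_{\mathsf{mdp}}$ by transferring the corresponding properties of $\mathsf{D}_{\mathsf{mc}}$ from \citep[Proposition~5.1]{ref:Sutter-19}, using the identity $\mathsf{D}_{\mathsf{mdp}}(\xi'\|\xi)=\mathsf{D}_{\mathsf{mc}}(G(\xi')\|G(\xi))$ of Proposition~\ref{prop:Dmdp-Dmc-relation} as the main bridge and supplementing it with direct manipulation of the sum in Definition~\ref{def:Lc} where that is cleaner. As a preliminary, I would observe that $G$ restricted to $\Xi_0$ is continuous with image in $\Theta_0$, since the marginals appearing in the denominators of $G$ are bounded away from zero on $\Xi_0$ and $G$ preserves strong connectivity of the support. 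I would also record that $\Xi_0$ is open in $\Xi$, because a sufficiently small perturbation of $\xi\in\Xi_0$ in $\Xi$ preserves all existing positive entries, hence the entire support, and therefore strong connectivity.

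For convexity~\ref{Dm:convexity} I would expand Definition~\ref{def:Lc} as
\[
    \mathsf{D}_{\mathsf{mdp}}(\xi'\|\xi)=\sum_{s,a,s'}\xi'(s,a,s')\log\xi'(s,a,s') - \sum_{s} \mu'_{\mc S}(s)\log \mu'_{\mc S}(s) + L_{\xi}(\xi'),
\]
where $\mu'_{\mc S}(s)=\sum_{\tilde a,\tilde s}\xi'(s,\tilde a,\tilde s)$ and $L_{\xi}(\xi')$ is linear in $\xi'$ for fixed $\xi$. The first two terms together equal minus the conditional entropy of $(A,S')$ given $S$ under the joint distribution $\xi'$. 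A short Hessian calculation on the bivariate function $(a,b)\mapsto a\log(b/a)$ shows that this conditional entropy is concave in $\xi'$, so its negative is convex, and convexity of $\mathsf{D}_{\mathsf{mdp}}(\cdot\|\xi)$ on $\Xi$ follows.

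For lower semicontinuity~\ref{lem:continuity:Dm}, the explicit sum in Definition~\ref{def:Lc} together with the conventions $0\log(0/t)=0$ and $t\log(t/0)=\infty$ makes $\mathsf{D}_{\mathsf{mdp}}$ jointly lower semicontinuous on $\Xi\times\Xi_0$. Since $\Xi_0$ is open in $\Xi$, the extension of $\mathsf{D}_{\mathsf{mdp}}$ to $\Xi\times(\Xi\setminus\Xi_0)$ prescribed in the text coincides with the lower semicontinuous hull, so lower semicontinuity extends to all of $\Xi\times\Xi$. Level compactness~\ref{lem:level:compact:Dm} then follows from the compactness of $\Xi\times\Xi$ and~\ref{lem:continuity:Dm}. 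For coerciveness~\ref{lem:coerc:Dm}, fix $\xi'\in\Xi_0$ and $\xi\in\Xi$ with $\supp(\xi')\nsubseteq\supp(\xi)$, pick $(s_0,a_0,s_0')$ with $\xi'(s_0,a_0,s_0')>0$ and $\xi(s_0,a_0,s_0')=0$, and note that for any $\zeta_n\in\Xi_0$ converging to $\xi$ the ratio $\zeta_n(s_0,a_0,s_0')/\sum_{\tilde a,\tilde s}\zeta_n(s_0,\tilde a,\tilde s)$ tends to $0$; hence the corresponding summand in Definition~\ref{def:Lc} diverges to $+\infty$. The conclusion for sequences in $\Xi\setminus\Xi_0$ then follows from the lower semicontinuous hull definition of the extension.

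For radial monotonicity~\ref{lem:radial:monotone:Dm}, the inclusion $\subseteq$ is immediate from~\ref{lem:continuity:Dm}. For the reverse inclusion, given $\xi\in\Xi$ with $\mathsf{D}_{\mathsf{mdp}}(\xi'\|\xi)\leq\rho$ I would construct a sequence $\xi_n\in\Xi_0$ converging to $\xi$ with $\mathsf{D}_{\mathsf{mdp}}(\xi'\|\xi_n)<\rho$, by mixing $\xi$ with a vanishing weight of a reference distribution $\xi_\star\in\Xi_0$ of full support, e.g., the uniform distribution on $\mc S\times\mc A\times\mc S$, and then invoking Proposition~\ref{prop:Dmdp-Dmc-relation} together with radial monotonicity of $\mathsf{D}_{\mathsf{mc}}$ from \citep[Proposition~5.1]{ref:Sutter-19}. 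Continuity of the sublevel set mapping~\ref{Dm:set:continuous} then combines~\ref{lem:continuity:Dm}, \ref{lem:level:compact:Dm}, and~\ref{lem:radial:monotone:Dm} in the standard way: upper semicontinuity of $\Gamma$ is implied by lower semicontinuity and level compactness, while lower semicontinuity of $\Gamma$ uses radial monotonicity to approximate each $\xi\in\Gamma(\xi')$ by interior elements of $\Xi_0$ belonging to $\Gamma(\xi'_n)$ whenever $\xi'_n\to\xi'$. The step I expect to be the main obstacle is radial monotonicity~\ref{lem:radial:monotone:Dm}, because $G$ is discontinuous on $\Xi\setminus\Xi_0$ and the approximating mixture $\xi_n$ must simultaneously respect the balanced-marginals constraint~\eqref{eq:balanced-marginals-xi}, enforce strong connectivity, and produce a strict drop of $\mathsf{D}_{\mathsf{mdp}}(\xi'\|\xi_n)$ below $\rho$.
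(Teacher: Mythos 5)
Your route is genuinely more self-contained than the paper's: the paper disposes of parts~\ref{lem:level:compact:Dm}, \ref{lem:radial:monotone:Dm}, \ref{Dm:set:continuous} and~\ref{Dm:convexity} by pointing to Propositions~5.1 and~3.1 of \citet{ref:Sutter-19} and asserting that the proofs "adapt from $\mathsf{D}_{\mathsf{mc}}$ to $\mathsf{D}_{\mathsf{mdp}}$," and handles~\ref{lem:coerc:Dm} with a one-line remark about the limits of $p\log(p/q)$ and $q\log(q/p)$. Your direct arguments for~\ref{lem:continuity:Dm}, \ref{lem:level:compact:Dm} and~\ref{Dm:convexity} are sound and arguably preferable: the decomposition of $\Lc{\xi'}{\xi}$ into a negative conditional entropy plus a $\xi'$-linear term is exactly the right way to see convexity (the relevant fact is joint convexity of the perspective $(a,b)\mapsto a\log(a/b)$ composed with the linear marginalization map), the observation that $\Xi_0$ is open in $\Xi$ correctly justifies that the prescribed extension is the lower semicontinuous hull, and level compactness is then immediate from compactness of $\Xi\times\Xi$.

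Two points need repair. First, in the coercivity argument~\ref{lem:coerc:Dm} you pick an arbitrary witness $(s_0,a_0,s_0')$ with $\xi'(s_0,a_0,s_0')>0=\xi(s_0,a_0,s_0')$ and claim $\zeta_n(s_0,a_0,s_0')/\sum_{\tilde a,\tilde s}\zeta_n(s_0,\tilde a,\tilde s)\to 0$; this fails if the entire marginal $\sum_{\tilde a,\tilde s}\xi(s_0,\tilde a,\tilde s)$ vanishes, since the ratio is then $0/0$ along the approximating sequence and can converge to anything (e.g., for $\zeta_n=(1-1/n)\xi+(1/n)\xi'$ the conditional at $s_0$ equals that of $\xi'$ and the corresponding summand is $0$). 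The fix is to choose the witness at a state with \emph{positive} $\xi$-marginal: if $Z$ denotes the set of states with zero $\xi$-marginal, the balanced-marginals property~\eqref{eq:balanced-marginals-xi} forces $\xi$ to place no mass on transitions into $Z$, while irreducibility of $\xi'$ forces some $\xi'(s,a,s')>0$ with $s\notin Z$ and $s'\in Z$; that triple is a valid witness and your ratio argument then goes through. Second, for radial monotonicity~\ref{lem:radial:monotone:Dm} the mixture $(1-t)\xi+t\xi_\star$ with a uniform reference does not produce the required strict inequality: convexity of $\mathsf D(p\|\cdot)$ only bounds the mixed divergence by a convex combination of $\Lc{\xi'}{\xi}$ and $\Lc{\xi'}{\xi_\star}$, which can exceed $\rho$ when $\Lc{\xi'}{\xi}=\rho$. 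The standard remedy (and what the cited proof in \citet{ref:Sutter-19} effectively does) is to mix the second argument toward $\xi'$ itself, using $\mathsf D(p\,\|\,(1-t)q+tp)\le(1-t)\mathsf D(p\|q)$ statewise to force a strict drop, adding a vanishing full-support perturbation only to land in $\Xi_0$. You correctly flag this step as the main obstacle, but as written the construction does not close.
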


\begin{proof}
Assertion~\ref{lem:continuity:Dm} follows from the definition of~$\mathsf{D}_{\mathsf{mdp}}$. Assertions~\ref{lem:level:compact:Dm},~\ref{lem:radial:monotone:Dm}, and~\ref{Dm:convexity} can be shown by adapting the proof of~\citep[Proposition 5.1]{ref:Sutter-19} from~$\mathsf{D}_{\mathsf{mc}}$ to~$\mathsf{D}_{\mathsf{mdp}}$.
Assertion~\ref{lem:coerc:Dm} holds because $\lim_{p\to 0} p\log(p/q)=0$ for any $q\ge 0$ and $\lim_{p \to 0} q\log(q/p)=\infty$ for any~$p>0$. Assertion~\ref{Dm:set:continuous} can be shown by adapting the proof of~\citep[Proposition~3.1]{ref:Sutter-19} from~$\mathsf{D}_{\mathsf{mc}}$ to~$\mathsf{D}_{\mathsf{mdp}}$. 
\end{proof}

\section{Distributionally Robust Off-Policy Evaluation}
\label{ssec:OPE:dist:shift}

Fix any MDP $(\mc S,\mc A, Q,r,\eta)$ of the type studied in Section~\ref{ssec:problem:statement}, and assume that~$Q\in\mc Q_0$. In addition, fix any stationary policy $\pi\in\Pi_0$. Such policies are called {\em exploratory} because $\pi>0$. We know from Section~\ref{ssec:problem:statement} that the state-action pairs of the MDP follow an irreducible Markov chain with a transition probability matrix of the form~\eqref{expr:P:from:pi:Q}. By the Markov law of large numbers \citep[Theorem~1.7.6]{norris1998markov}, the state-action-next-state distribution~$\xi\in\Xi_0$ of this Markov chain satisfies
\begin{align}
    \label{eq:general-xi-definition}
    \xi(s,a,s')= \lim_{T\to\infty} \frac{1}{T} \sum_{t=1}^T\mb P_{\pi,Q} \left(S_t=s,\;A_t=a,\;S_{t+1}=s'\right)\quad\forall s,s'\in\mc S,\; a\in\mc A,
\end{align}
where $\mb P_{\pi,Q}$ is defined as in Section~\ref{ssec:problem:statement}. In addition, $\xi$ is related to the stationary state-action distribution~$\mu$ through~\eqref{xi:alt:pi:Q}. We define the long-run average reward generated by~$\pi$ under~$Q$ as
\begin{align*}
    V(\xi)= \lim _{T \rightarrow \infty} \mb E_{\pi,Q} \left[ \frac{1}{T} \sum_{t=1}^{T}r(S_t, A_t) \right].
\end{align*}
Note that $V(\xi)$ is independent of~$\eta$ and depends on $\pi$ and~$Q$ only indirectly through~$\xi$ because
\begin{align*}
    V(\xi)= \lim _{T \rightarrow \infty} \sum_{s,s'\in\mc S} \sum_{a\in\mc A} r(s,a) \frac{1}{T} \sum_{t=1}^{T}  \mb P_{\pi,Q} \left(S_t=s,\;A_t=a,\; S_{t+1}=s' \right)
    =\sum_{s,s'\in\mc S} \sum_{a\in\mc A} r(s,a) \xi(s,a,s').
\end{align*}
This shows in particular that $V(\xi)$ is linear and thus continuous in~$\xi$. Note also that since~$\xi\in\Xi_0$, we have that $V(\xi)= \lim _{T \rightarrow \infty} \frac{1}{T} \sum_{t=1}^{T}r(S_t, A_t)$ $\mb P_\xi$-almost surely by virtue of the Markov law of large numbers \citep[Theorem~1.7.6]{norris1998markov}. We express the long-run average reward~$V(\xi)$ as a function of~$\xi$ instead of~$\mu$ because $\xi$ contains full information about~$\pi$ and~$Q$, whereas~$\mu$ does not.

Assume now that~$Q$ is {\em unknown}, which is a standard assumption in reinforcement learning. In contrast, the reward function~$r$ and the initial state distribution~$\eta$ are {\em known}. We also distinguish an {\em unknown} behavioral policy~$\pi_0\in\Pi_0$ and a {\em known} evaluation policy~$\pi\in\Pi_0$. In addition, we refer to the state-action-next-state distributions associated with~$\pi_0$ and~$\pi$ as~$\xi_0\in\Xi_0$ and~$\xi\in\Xi$, respectively. Note that both~$\xi_0$ and~$\xi$ are unobservable because they depend on the unknown transition kernel~$Q$. 

In the following we assume to have access to a single state-action trajectory $X_1,\ldots,X_T$ generated under the behavioral policy~$\pi_0$. This trajectory provides information about the unknown transition kernel~$Q$ and can thus be used to estimate the reward function at~$\xi_0$ as well as at~$\xi$. We thus distinguish two fundamental estimation problems. The {\em on-policy evaluation problem} asks for an estimate of~$V(\xi_0)$, that is, the long-run average reward of the behavioral policy~$\pi_0$ that generates the data. In contrast, the {\em off-policy evaluation problem} asks for an estimate of~$V(\xi)$, that is,  the long-run average reward of the evaluation policy~$\pi$. Note that there is {\em no} data generated under~$\pi$. The off-policy evaluation problem can be viewed as an estimation problem with a distribution shift because we aim to estimate the expected reward under~$\mb P_\xi$ from data generated under~$\mb P_{\xi_0}$.


We will see below that both~$\xi$ as well as~$\te$ admit asymptotically consistent estimators. A na\"ive solution of the on-policy evaluation problem would be to approximate~$V(\xi_0)$ by the plug-in estimator~$V(\qhat)$, where $\qhat$ is the empirical distribution defined in~\eqref{MDP:estimator}. By \citep[Theorem~1.7.6]{norris1998markov}, $\qhat$ converges almost surely to~$\xi_0$ as~$T$ grows because the Markov chain of state-action pairs is irreducible under~$\pi_0$. Hence, $V(\qhat)$ converges $\mb P_{\xi_0}$-almost surely to~$V(\xi_0)$. However, we will later see that~$V(\qhat)$ is likely to overestimate the true average reward---especially at small sample sizes. Also, it does not admit a natural generalization to off-policy evaluation. An important ingredient for solving the off-policy evaluation problem is the following distribution shift function.

\begin{definition}[Distribution shift function]\label{def:f}
For every exploratory evaluation policy~$\pi\in\Pi_0$, the distribution shift function $\fe:\Xi_0\to\Xi$ satisfies~$f_\pi(\xi_0)=\xi$, where $\xi$ is the state-action-next-state distribution corresponding to $(\pi,Q)$ and where $Q$ is the transition kernel induced by~$\xi_0$ through~\eqref{eq:q-from-xi}.
\end{definition}

Note that~$f_\pi$ is well-defined because the evaluation policy~$\pi\in\Pi_0$ is given and~$Q\in\mc Q_0$ is determined by~$\xi_0\in\Xi_0$ via~\eqref{eq:q-from-xi} and because we know from Section~\ref{ssec:problem:statement} that $(\pi,Q)$ induces a unique~$\xi\in\Xi_0$. Using the distribution shift function, we can recast the long-run average reward of the evaluation policy as $V(\xi)=V(f_\pi(\xi_0))$.
%
%
A na\"ive solution to the off-policy evaluation problem would therefore be to approximate~$V(\xi)$ by the plug-in estimator~$V(\fe(\qhat))$. This simply amounts to replacing the unknown transition kernel~$Q$ with its maximum likelihood estimator, which is obtained by substituting~$\qhat$ into~\eqref{eq:q-from-xi}. In the discounted reward setting this approach is sometimes referred to as the {\em direct method}. We emphasize that if~$T$ is small, then~$\qhat$ may adopt values in~$\Xi\backslash \Xi_0$, and in these cases the direct estimator~$V(\fe(\qhat))$ is undefined. As~$\qhat$ converges almost surely to~$\xi_0\in\Xi_0$, however, $V(\fe(\qhat))$ is eventually well-defined for all sufficiently large~$T$. Moreover, $V(\fe(\qhat))$ converges almost surely to~$V(\xi)$ because~$V$ and~$f_\pi$ are continuous.

\begin{lemma}[Continuity of $\fe$]
\label{lem:cont:sol:i-proj}
The distribution shift function $\fe$ is continuous on $\Xi_0.$ 
\end{lemma}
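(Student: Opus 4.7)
The plan is to express $\fe$ as a composition of four continuous maps that mirror Definition~\ref{def:f}. First I would, for $\xi_0 \in \Xi_0$, recover the transition kernel $Q$ via~\eqref{eq:q-from-xi}; then form the transition matrix $P((s,a),(s',a')) = \pi(a'|s') Q(s'|s,a)$ of the Markov chain induced by $(\pi, Q)$; then extract its unique stationary state-action distribution $\mu'$; and finally assemble $\fe(\xi_0)(s,a,s') = \mu'(s,a) Q(s'|s,a)$ via~\eqref{xi:alt:pi:Q}.

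Three of these four steps are easy. The map $\xi_0 \mapsto Q$ is continuous on $\Xi_0$ because, for any $\xi_0 \in \Xi_0$, the induced Markov chain is irreducible, so its stationary state-action distribution satisfies $\mu^0(s,a) = \sum_{\tilde s \in \mc S} \xi_0(s,a,\tilde s) > 0$ for every $(s,a)$; hence the denominator in~\eqref{eq:q-from-xi} is bounded away from zero on a neighborhood of $\xi_0$ inside $\Xi_0$. The map $Q \mapsto P$ is linear in $Q$ (with $\pi$ fixed) and therefore continuous, and the final assembly $(\mu', Q) \mapsto \fe(\xi_0)$ is bilinear and hence jointly continuous.

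The main obstacle is the third step: showing that the stationary distribution $\mu'$ depends continuously on $P$. Since $\pi \in \Pi_0$ and $Q \in \mc Q_0$, Lemma~\ref{lem:irreducibleity:pi:Q:X_t} ensures that the induced Markov chain is irreducible, so $1$ is a simple eigenvalue of $P$ and a direct calculation shows that the matrix $I - P + \mathbf{1}\mathbf{1}^\top$ (with $\mathbf{1}$ the all-ones column vector in $\mb R^d$) is invertible. Multiplying the stationarity equations $\mu'(I - P) = 0$ and $\mu'\mathbf{1} = 1$ together yields the closed-form expression
\[
    \mu' = \mathbf{1}^\top \bigl(I - P + \mathbf{1}\mathbf{1}^\top\bigr)^{-1}.
\]
Because matrix inversion is continuous at every invertible matrix and because $P$ itself varies continuously with $\xi_0$ by the previous steps, this formula delivers the required continuity of $\xi_0 \mapsto \mu'$ on $\Xi_0$. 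Composing the four continuous maps yields the continuity of $\fe$ on $\Xi_0$.
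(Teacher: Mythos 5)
Your proof is correct and follows essentially the same route as the paper's: both factor $\fe$ through the chain of maps $\xi_0\mapsto Q\mapsto P\mapsto \mu'\mapsto \fe(\xi_0)$ and invoke irreducibility (Perron--Frobenius) to control the stationary distribution. The only place you go beyond the paper is the third step, where the paper simply asserts that the unique positive solution of the stationarity equations inherits continuity from $P$, whereas you justify this via the explicit fundamental-matrix formula $\mu' = \mathbf{1}^\top\bigl(I-P+\mathbf{1}\mathbf{1}^\top\bigr)^{-1}$; this formula is valid (the matrix is invertible precisely because $1$ is a simple eigenvalue of the irreducible stochastic matrix $P$), and it makes the continuity of $\xi_0\mapsto\mu'$ immediate from the continuity of matrix inversion.
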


\begin{proof}
By Definition~\ref{def:f}, the distribution shift function satisfies $f_\pi(\xi_0)=\xi$, and~\eqref{xi:alt:pi:Q} implies that
\begin{align*}
    \xi(s,a,s')= Q(s'|s,a)\mu(s,a) \quad \forall s,s'\in\mathcal{S}, ~ a,a'\in\mc A.
\end{align*}
By~\eqref{eq:q-from-xi}, the transition kernel~$Q$ is a rational and therefore continuous function of~$\xi_0\in \Xi_0$. Hence, the transition probability matrix $P$ defined via~\eqref{expr:P:from:pi:Q} is also continuous in~$\xi_0$. By the Perron-Frobenius theorem, the stationary distribution~$\mu$ is the unique positive solution of the linear equations
\[
    \sum_{s\in\mc S, a\in\mc A}\mu(s,a)=1\quad \text{and} \quad \mu(s',a')=\sum_{s\in\mc S, a\in\mc A}\mu(s,a) P((s,a),(s',a'))\quad\forall s'\in\mc S,~a'\in\mc A.
\]
Therefore it inherits continuity in~$\xi_0$ from~$P$. In summary, these insights imply that $f_\pi(\xi_0)=\xi$ is continuous in~$\xi_0$ throughout~$\Xi_0$. Hence, the claim follows.
\end{proof}

The following extension of Lemma~\ref{lem:cont:sol:i-proj} will be useful in Section~\ref{ssec:OPL:dist:shift}.

\begin{corollary}[Continuity of $f_\pi$]
\label{cor:joint-cont:i-proj}
The function $\fe(\xi_0)$ is continuous in $(\pi,\xi_0)\in\Pi_0\times\Xi_0$. 
\end{corollary}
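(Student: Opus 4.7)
The plan is to retrace the proof of Lemma~\ref{lem:cont:sol:i-proj} while carrying the evaluation policy~$\pi$ as an additional argument, verifying that each intermediate object depends jointly continuously on $(\pi,\xi_0)$. Concretely, I would chain together the following four maps:
\begin{enumerate}[label=(\roman*)]
\item the transition kernel $Q\in\mc Q_0$ recovered from $\xi_0$ via~\eqref{eq:q-from-xi};
\item the transition matrix $P$ on $\mc X=\mc S\times\mc A$ assembled from $(\pi,Q)$ via~\eqref{expr:P:from:pi:Q};
\item the stationary state-action distribution $\mu$ associated with $P$;
\item the state-action-next-state distribution $\xi(s,a,s')=Q(s'|s,a)\mu(s,a)$.
\end{enumerate}

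First I would observe that (i) defines $Q$ as a rational function of $\xi_0$ whose denominator $\sum_{\tilde s} \xi_0(s,a,\tilde s)$ is strictly positive on $\Xi_0$ (this positivity is a consequence of $\xi_0\in\Xi_0$, since the inducing policy lies in $\Pi_0$ and hence every state-action pair has positive stationary mass). Therefore $\xi_0\mapsto Q$ is continuous on $\Xi_0$. Combined with the bilinear formula~\eqref{expr:P:from:pi:Q}, step~(ii) gives that $(\pi,\xi_0)\mapsto P$ is jointly continuous on $\Pi_0\times\Xi_0$.

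For step~(iii), I need uniqueness of the stationary distribution in a whole neighborhood of each $(\pi,\xi_0)$. Since $\xi_0\in\Xi_0$, the kernel $Q$ extracted from $\xi_0$ lies in $\mc Q_0$, and any $\pi\in\Pi_0$ satisfies $\pi>0$; by Lemma~\ref{lem:irreducibleity:pi:Q:X_t} the induced Markov chain on $\mc X$ is therefore irreducible, so the Perron--Frobenius theorem yields a unique positive stationary distribution $\mu$ characterized by the linear system
\[
\sum_{s\in\mc S,a\in\mc A}\mu(s,a)=1,\qquad \mu(s',a')=\sum_{s\in\mc S, a\in\mc A}\mu(s,a)\,P((s,a),(s',a'))\quad\forall s'\in\mc S,~a'\in\mc A.
\]
Since irreducibility is an open property of the kernel and the solution of this full-rank linear system depends continuously on its coefficients, the map $P\mapsto \mu$ is continuous on the set of irreducible kernels. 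Composing with step~(ii) yields joint continuity of $(\pi,\xi_0)\mapsto\mu$.

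Finally, step~(iv) expresses $f_\pi(\xi_0)(s,a,s')=Q(s'|s,a)\mu(s,a)$ as a product of two jointly continuous functions of $(\pi,\xi_0)$, giving the desired joint continuity. The only delicate point---the main obstacle---is (iii): one must justify that the Perron--Frobenius eigenvector depends continuously on $P$, which requires uniform irreducibility on a neighborhood of $(\pi,\xi_0)$; this is handled by the openness argument above and reduces the rest of the argument to a verbatim adaptation of Lemma~\ref{lem:cont:sol:i-proj} with $\pi$ now treated as a variable.
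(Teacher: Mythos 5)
Your proposal is correct and follows exactly the paper's route: the paper's proof simply observes that $Q$ is independent of $\pi$ and $P$ is jointly continuous in $(\pi,\xi_0)$, and then repeats the argument of Lemma~\ref{lem:cont:sol:i-proj} with obvious modifications, which is precisely the chain of maps you spell out. Your additional care about continuity of the Perron--Frobenius eigenvector is a more explicit rendering of the same step already present in the proof of Lemma~\ref{lem:cont:sol:i-proj}.
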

\begin{proof}
By~\eqref{eq:q-from-xi}, $Q$ is independent of~$\pi$, and by~\eqref{expr:P:from:pi:Q}, $P$ is continuous in~$(\pi,\xi_0)$. The continuity of $\fe(\xi_0)$ can thus be shown by repeating the proof of Lemma~\ref{lem:cont:sol:i-proj} with obvious modifications.
\end{proof}

Lemma~\ref{lem:cont:sol:i-proj} implies via the Markov law of large numbers \citep[Theorem~1.7.6]{norris1998markov} that the direct estimator~$V(\fe(\qhat))$ converges $\mb P_{\xi_0}$-almost surely to $V(f_\pi(\xi_0))=V(\xi)$ and is therefore asymptotically consistent. We now introduce an alternative---distributionally robust---estimator~$V^{\pi}_\rho(\qhat)$ for~$V(\xi)$, which is defined through the worst-case value function $V^{\pi}_\rho: \Xi\to\mb R$ with
\begin{align}
    \label{def:vstar:q}
    V^{\pi}_\rho(\xi')=\inf _{\xi_0\in \Xi_0}\left\{V(\fe(\xi_0)):\Lc{\xi'}{\xi_0} \leq \rho\right\}\quad \forall \xi'\in\Xi.
\end{align}
By construction, the optimization problem in~\eqref{def:vstar:q} seeks the worst-case reward of the evaluation policy~$\pi$ with respect to all state-action-next-state distributions~$\xi_0$ close to a realization~$\xi'\in\Xi$ of the empirical estimator~$\qhat$. Here, proximity between~$\xi'$ and~$\xi_0$ is measured by the conditional relative entropy for MDPs introduced in Definition~\ref{def:Lc}.
Hence, the feasible set of problem~\eqref{def:vstar:q} can be viewed as a conditional relative entropy ball of radius $\rho\geq 0$ in the space of state-action-next-state distributions. If $\rho=0$, then the distributioinally robust estimator~$V^{\pi}_\rho(\qhat)$ collapses to the direct estimator~$V(\fe(\qhat))$, which is ill-defined unless~$\qhat\in\Xi_0$. If~$\rho>0$, on the other hand, then problem~\eqref{def:vstar:q} is guaranteed to be feasible for any~$\xi'\in\Xi$. Indeed, $\{\xi \in \Xi: \Lc{\xi'}{\xi}\leq \rho\}$ is non-empty because $\Lc{\xi'}{\xi'}=0$. This implies via the radial monotonicity established in Lemma~\ref{lem:properties:Dm}\,\ref{lem:radial:monotone:Dm} that problem~\eqref{def:vstar:q} is feasible. Hence, $V^{\pi}_\rho(\qhat)$ is well-defined for all $\rho>0$.

In addition, if~$\rho>0$, then the minimum in~\eqref{def:vstar:q} is attained whenever~$\xi'\in\Xi_0$. Indeed, the objective function~$V(\fe(\xi_0))$ is continuous on~$\Xi_0$ thanks to Lemma~\ref{lem:cont:sol:i-proj}, and the feasible set satisfies 
\begin{align}
    \label{eq:Gamma-Xi-Xi0}
    \big\{\xi_0\in\Xi_0 : \Lc{\xi'}{\xi_0} \leq \rho\big\}=\big\{\xi_0\in\Xi : \Lc{\xi'}{\xi_0} \leq \rho\big\}.
\end{align}
The equality holds because $\xi'\in\Xi_0$, which implies via Lemma~\ref{lem:properties:Dm}\,\ref{lem:coerc:Dm} that $\lim_{\zeta\in\Xi,\zeta\to\xi_0}\Lc{\xi'}{\zeta }=\infty$ for all $\xi_0\in\Xi\backslash \Xi_0$. Hence the feasible set is compact by virtue of Lemma~\ref{lem:properties:Dm}\,\ref{lem:level:compact:Dm}, and the infimum in~\eqref{def:vstar:q} is attained by the Weierstrass extreme value theorem. From now on we assume that~$\rho>0$.

In the remainder of this section we will show that the proposed distributionally robust estimator~$V^{\pi}_\rho(\qhat)$ for~$V(\fe(\xi_0))$ enjoys rigorous finite-sample and asymptotic consistency guarantees. In addition, we show that $V^{\pi}_\rho(\qhat)$ is, in a precise sense, the least conservative estimator whose out-of-sample disappointment decays exponentially at rate~$\rho$. Throughout this section, we restrict attention to estimators of the form~$\widehat V^\pi (\qhat) $, where $\widehat V^\pi:\Xi\to\mb R$ is an arbitrary lower semicontinuous function. We refer to the probability $\mb P_{\xi_0}(V(\fe({\xi_0}))<\widehat V^\pi (
\qhat))$ as the \textit{out-of-sample disappointment} of the estimator~$\widehat V^\pi( \qhat)$ under the model~${\xi_0}\in\Xi_0$. It quantifies the probability that the actual expected reward of the evaluation policy is strictly smaller than the reward predicted by the estimator. If the out-of-sample disappointment is large, then $\widehat V^\pi (\qhat)$ {\em overestimates} the expected reward $V(\fe({\xi_0}))$ with high probability. Hence, the estimator is overly optimistic, which may lead to disappointment.
In the following, we will restrict our attention to estimators~$\widehat V^\pi(\qhat)$ that satisfy
\begin{equation}\label{eq:condition:admissibility}
    \limsup_{T\to\infty}\frac{1}{T} \log\mb P_{\xi_0}\left( V(\fe({\xi_0})) <  \widehat V^\pi(\qhat)\right) \leq -\rho \quad \forall {\xi_0}\in\Xi_0.
\end{equation}
This condition ensures that the out-of-sample disappointment decays asymptotically as $e^{-\rho T+o(T)}$. It implies that $\widehat{V}^\pi (\qhat)$ becomes an increasingly reliable lower confidence bound on $V(\fe({\xi_0}))$ as the sample size grows. Underestimating the true expected reward~$V(\fe({\xi_0}))$ means to err on the side of caution. We will now demonstrate that the distributionally robust estimator $V^{\pi}_\rho(\qhat)$ satisfies~\eqref{eq:condition:admissibility} and that it is in fact the least conservative estimator satisfying~\eqref{eq:condition:admissibility}. As a preparation, the next proposition shows that the function~$V^{\pi}_\rho$ is lower semicontinuous on~$\Xi$ and continuous on~$\Xi_0$.
\begin{lemma}[Continuity properties of $V^{\pi}_\rho$]
    \label{lem:lsc}
    For any fixed $\rho>0$ and $\pi\in\Pi_0$, the distributionally robust value function $V^{\pi}_\rho(\xi')$ is lower semicontinuous in~$\xi'\in\Xi$ and continuous in~$\xi'\in\Xi_0$.
\end{lemma}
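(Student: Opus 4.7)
The plan is to combine Berge's maximum theorem with the structural properties of~$\mathsf{D}_{\mathsf{mdp}}$ collected in Lemma~\ref{lem:properties:Dm}. The two claims are handled separately.

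\textbf{Continuity on $\Xi_0$.} Fix $\xi'\in\Xi_0$. The strong connectedness of the graph induced by $\supp(\xi')$ is preserved under sufficiently small perturbations (positive entries remain positive, and new edges only enrich the edge set), so $\Xi_0$ contains a full neighborhood~$U\subseteq\Xi$ of~$\xi'$. For every $\tilde\xi'\in U\subseteq\Xi_0$, identity~\eqref{eq:Gamma-Xi-Xi0} together with the coercivity argument following it shows that the feasible set of~\eqref{def:vstar:q} coincides with the sublevel set $\Gamma(\tilde\xi')=\{\xi_0\in\Xi:\Lc{\tilde\xi'}{\xi_0}\leq\rho\}$, and moreover $\Gamma(\tilde\xi')\subseteq\Xi_0$. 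By Lemma~\ref{lem:properties:Dm}\,\ref{lem:level:compact:Dm}--\ref{Dm:set:continuous}, $\Gamma$ is a continuous correspondence with non-empty compact values on~$U$, and by Lemma~\ref{lem:cont:sol:i-proj} the objective $V\circ f_\pi$ is continuous on~$\Xi_0$. Berge's maximum theorem applied on~$U$ thus yields continuity of $V^{\pi}_\rho$ at~$\xi'$.

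\textbf{Lower semicontinuity on $\Xi$.} Fix $\xi'_\ast\in\Xi$ and a sequence $\xi'_n\to\xi'_\ast$ in~$\Xi$; the goal is to show $\liminf_n V^{\pi}_\rho(\xi'_n)\geq V^{\pi}_\rho(\xi'_\ast)$. Since $V\circ f_\pi$ is bounded by~$\|r\|_\infty$, we may pass to a subsequence and assume that the limit $L:=\lim_n V^{\pi}_\rho(\xi'_n)$ exists in~$\mathbb{R}$. For each $n$ pick $\xi_0^n\in\Xi_0$ with $\Lc{\xi'_n}{\xi_0^n}\leq\rho$ and $V(f_\pi(\xi_0^n))\leq V^{\pi}_\rho(\xi'_n)+1/n$. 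By Lemma~\ref{lem:properties:Dm}\,\ref{lem:level:compact:Dm} the sequence $\{\xi_0^n\}$ is precompact, and along a further subsequence $\xi_0^n\to\xi_0^\ast\in\Xi$; the joint lower semicontinuity in Lemma~\ref{lem:properties:Dm}\,\ref{lem:continuity:Dm} then yields $\Lc{\xi'_\ast}{\xi_0^\ast}\leq\rho$. If $\xi_0^\ast\in\Xi_0$, Lemma~\ref{lem:cont:sol:i-proj} gives $V(f_\pi(\xi_0^n))\to V(f_\pi(\xi_0^\ast))$, and the feasibility of $\xi_0^\ast$ for $V^{\pi}_\rho(\xi'_\ast)$ produces $V^{\pi}_\rho(\xi'_\ast)\leq V(f_\pi(\xi_0^\ast))\leq L$. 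If instead $\xi_0^\ast\in\Xi\setminus\Xi_0$, the radial monotonicity in Lemma~\ref{lem:properties:Dm}\,\ref{lem:radial:monotone:Dm} supplies a sequence $\zeta_k\in\Xi_0$ with $\zeta_k\to\xi_0^\ast$ and $\Lc{\xi'_\ast}{\zeta_k}<\rho$, so each $\zeta_k$ is feasible for $V^{\pi}_\rho(\xi'_\ast)$. A diagonal selection pairs $\zeta_k$ with an index $n_k$ so that $\|\zeta_k-\xi_0^{n_k}\|\leq 1/k$, and continuity of $V\circ f_\pi$ along matched sequences in $\Xi_0$ sharing the limit point $\xi_0^\ast$ forces $V(f_\pi(\zeta_k))-V(f_\pi(\xi_0^{n_k}))\to 0$; hence $V^{\pi}_\rho(\xi'_\ast)\leq\lim_k V(f_\pi(\zeta_k))=L$.

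\textbf{Main obstacle.} The difficult step is the boundary case $\xi_0^\ast\in\partial\Xi_0$, where $V\circ f_\pi$ may fail to admit a continuous extension, so distinct approximating sequences can converge to distinct limiting values. Radial monotonicity is precisely the tool that allows simultaneous approximation from within both~$\Xi_0$ and the sublevel set $\Gamma(\xi'_\ast)$, while the diagonal construction must be engineered so that the two sequences $\zeta_k$ and $\xi_0^{n_k}$ approach $\xi_0^\ast$ at comparable rates, ensuring that the boundary discontinuity of $V\circ f_\pi$ cancels in the difference.
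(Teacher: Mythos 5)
Your first part (continuity on~$\Xi_0$) is sound: $\Xi_0$ is relatively open in~$\Xi$, the feasible set coincides with the continuous, compact-valued correspondence~$\Gamma$ there, and Berge's maximum theorem applies; this is essentially the paper's own argument via \citep[Theorem~17.31]{aliprantis2006infinite}. The genuine gap is in the lower-semicontinuity argument, in the boundary case $\xi_0^\ast\in\Xi\backslash\Xi_0$. There you assert that pairing $\zeta_k$ with $\xi_0^{n_k}$ so that $\|\zeta_k-\xi_0^{n_k}\|\le 1/k$ forces $V(\fe(\zeta_k))-V(\fe(\xi_0^{n_k}))\to 0$. This does not follow from anything you have established: Lemma~\ref{lem:cont:sol:i-proj} gives continuity of $V\circ\fe$ only on~$\Xi_0$, not uniform continuity near $\Xi\backslash\Xi_0$, and when a function has no continuous extension to the common limit point, two sequences converging to it can carry function values with different limits no matter how fast the distance between paired terms shrinks (compare $\sin(1/x)$ along $x_k=1/(2\pi k)$ and $y_k=1/(2\pi k+\pi/2)$, where $|x_k-y_k|=O(k^{-2})$ yet the values stay a unit apart). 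Since stationary distributions of Markov chains are exactly this kind of path-dependently discontinuous at reducibility boundaries, the step you yourself flag as the ``main obstacle'' is not merely delicate---as written it is false, and no choice of ``comparable rates'' repairs it.

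The paper avoids ever comparing two sequences. It replaces $V\circ\fe$ by its lower semicontinuous envelope~$\lsc$ on all of~$\Xi$, observes that $\varphi(\xi')=\min_{\xi_0\in\Gamma(\xi')}\lsc(\xi_0)$ is lower semicontinuous by \citep[Lemma~17.29]{aliprantis2006infinite}---which needs only the hemicontinuity of~$\Gamma$ and a one-sided $\liminf$ bound on~$\lsc$---and then identifies $\varphi$ with $V^\pi_\rho$ pointwise, using radial monotonicity and the definition of~$\lsc$ to handle $\xi'\in\Xi\backslash\Xi_0$. If you want to salvage your sequential route, you should import the same device: bound $L=\lim_n V(\fe(\xi_0^n))\ge\liminf_n\lsc(\xi_0^n)\ge\lsc(\xi_0^\ast)$ using only lower semicontinuity of the envelope, and then prove separately that $V^\pi_\rho(\xi'_\ast)\le\lsc(\xi_0^\ast)$; that last inequality is where radial monotonicity genuinely enters, and it is the step that replaces your unproven cancellation claim.
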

\begin{proof}
Define $\overline{V}(\xi_0)=
\lim_{\delta \downarrow 0}\inf_{\zeta\in\Xi_0} \{V(\fe(\zeta)):\|\zeta-\xi_0\|\le\delta\}$
as the unique lower semicontinuous extension of~$V(\fe(\xi_0))$ to $\Xi$. Indeed, $\lsc$ is lower semicontinuous by construction and coincides with~$V(\fe(\xi_0))$ on~$\Xi_0$ by virtue of Lemma~\ref{lem:cont:sol:i-proj}. Define now the set-valued mapping $\Gamma(\xi')=\{ \xi_0\in\Xi :\Lc{\xi'}{\xi_0}\leq \rho\}$, which is continuous thanks to Lemma~\ref{lem:properties:Dm}\,\ref{Dm:set:continuous}.
Noting that $\Gamma(\xi')\ne\emptyset$ for every $\xi'\in\Xi$ because $\Lc{\xi'}{\xi'}=0$, \citep[Lemma~17.29]{aliprantis2006infinite} implies that
\begin{align*}
    \varphi(\xi') = \min_{\xi_0\in\Gamma(\xi')} \lsc(\xi_0)
\end{align*}
is lower semicontinuous on $\Xi$. In addition, we know from~\eqref{eq:Gamma-Xi-Xi0} that $\Gamma(\xi')\subseteq\Xi_0$ for every $\xi'\in\Xi_0$. 
As $\lsc(\xi_0) = V(\fe(\xi_0))$ is continuous on $\Xi_0$ by virtue of Lemma~\ref{lem:cont:sol:i-proj}, we may then use~\citep[Theorem~17.31]{aliprantis2006infinite} to conclude that $\varphi(\xi')$ is continuous on~$\Xi_0$. The claim thus follows if we can show that $V_\rho^\pi(\xi') = \varphi(\xi')$ for every~$\xi'\in\Xi$. By~\eqref{eq:Gamma-Xi-Xi0}, this identity clearly holds for every~$\xi'\in\Xi_0$. Assume now that~$\xi'\in\Xi\backslash\Xi_0$, and select any $\xi_0^\star \in\arg\min_{\xi_0\in\Gamma(\xi')} \lsc(\xi_0)$, which exists because $\Gamma(\xi')$ is non-empty and compact and because $\lsc(\xi_0)$ is lower semicontinuous. 
Thus, we find
\[
    \varphi(\xi')\leq V_\rho^\pi(\xi') \leq \lim_{\delta \downarrow 0}\inf_{\zeta\in\Xi_0} \left\{V(\fe(\zeta)):\|\zeta-\xi^\star_0\|\le\delta \right\} = \lsc(\xi_0^\star) = \varphi(\xi'),
\]
where the first inequality holds because the feasible set of~\eqref{def:vstar:q} is obtained by restricting~$\Gamma(\xi')$ to~$\Xi_0$, and the two equalities follow from the definitions of~$\lsc$ and~$\xi_0^\star$, respectively. Hence, the identity $V_\rho^\pi(\xi') = \varphi(\xi')$ holds indeed for all~$\xi'\in\Xi$. This observation completes the proof.
\end{proof}

The following extension of Lemma~\ref{lem:lsc} will be useful in Section~\ref{ssec:OPL:dist:shift}.
\begin{corollary}[Continuity properties of $V^\pi_\rho$]
\label{cor:admissible:dr-policy}
For any fixed~$\rho>0$, the distributionally robust value function $V^\pi_\rho(\xi')$ is continuous in~$\pi$ and lower semicontinuous in $\xi'$ throughout $\Pi_\epsilon\times\Xi$. 
\end{corollary}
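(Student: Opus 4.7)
The plan is to mirror the proof of Lemma~\ref{lem:lsc}, promoting each $\xi'$-only continuity statement to a joint statement in $(\pi,\xi')$ via the joint continuity of the distribution shift function from Corollary~\ref{cor:joint-cont:i-proj}. Because $\Pi_\epsilon\subseteq \Pi_0$, the lower semicontinuity of $\xi'\mapsto V^\pi_\rho(\xi')$ at each fixed $\pi\in\Pi_\epsilon$ is immediate from Lemma~\ref{lem:lsc}. The substantive task is to establish continuity of $\pi\mapsto V^\pi_\rho(\xi')$ at each fixed $\xi'\in\Xi$, and the crucial structural observation making this possible is that the feasible set $\Gamma(\xi')=\{\xi_0\in\Xi:\Lc{\xi'}{\xi_0}\le \rho\}$ of problem~\eqref{def:vstar:q} is independent of $\pi$.

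I would prove upper and lower semicontinuity in $\pi$ separately. For upper semicontinuity at some $\pi\in\Pi_\epsilon$, fix $\delta>0$ and select a $\delta$-suboptimal $\xi_0^\delta\in \Xi_0$ satisfying $\Lc{\xi'}{\xi_0^\delta}\le\rho$ and $V(f_\pi(\xi_0^\delta))\le V^\pi_\rho(\xi')+\delta$. For any sequence $\pi_n\to\pi$ in $\Pi_\epsilon$ the point $\xi_0^\delta$ remains feasible for $V^{\pi_n}_\rho(\xi')$ because the feasibility constraint does not involve $\pi$, so $V^{\pi_n}_\rho(\xi')\le V(f_{\pi_n}(\xi_0^\delta))$. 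Corollary~\ref{cor:joint-cont:i-proj} together with the linearity of $V$ shows that the right hand side converges to $V(f_\pi(\xi_0^\delta))\le V^\pi_\rho(\xi')+\delta$; letting $\delta\downarrow 0$ yields $\limsup_n V^{\pi_n}_\rho(\xi')\le V^\pi_\rho(\xi')$.

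For lower semicontinuity in $\pi$, I would use the representation $V^\pi_\rho(\xi')=\min_{\xi_0\in\Gamma(\xi')}\overline V_\pi(\xi_0)$ derived in the proof of Lemma~\ref{lem:lsc}, pick $1/n$-suboptimal points $\xi_0^{(n)}\in\Xi_0\cap\Gamma(\xi')$ for $V^{\pi_n}_\rho(\xi')$, and extract a subsequence converging to some $\xi_0^\star\in\Gamma(\xi')$ via the compactness supplied by Lemma~\ref{lem:properties:Dm}\ref{lem:level:compact:Dm}. If $\xi_0^\star\in\Xi_0$, the joint continuity on $\Pi_\epsilon\times\Xi_0$ from Corollary~\ref{cor:joint-cont:i-proj} yields $V(f_{\pi_n}(\xi_0^{(n)}))\to V(f_\pi(\xi_0^\star))\ge V^\pi_\rho(\xi')$, closing the argument. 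The main obstacle is the case $\xi_0^\star\in \Gamma(\xi')\setminus\Xi_0$, in which the approximating points drift to the boundary of $\Xi_0$ and joint continuity degenerates. I would address this by upgrading the pointwise-in-$\pi$ extension $\overline V_\pi$ to the joint lower semicontinuous envelope $\widetilde h(\pi,\xi_0)=\liminf_{(\pi',\zeta)\to(\pi,\xi_0),\,\zeta\in\Xi_0}V(f_{\pi'}(\zeta))$, and by verifying the identity $\widetilde h(\pi,\xi_0)=\overline V_\pi(\xi_0)$ on $\Pi_\epsilon\times\Xi$; the nontrivial inequality reduces to uniform continuity of $\pi\mapsto V(f_\pi(\zeta))$ on $\Pi_\epsilon$ locally uniformly in $\zeta$, which is an immediate consequence of the compactness of $\Pi_\epsilon$ and Corollary~\ref{cor:joint-cont:i-proj}. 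Once this identification is available, joint lower semicontinuity of $\widetilde h$ delivers $\liminf_n V(f_{\pi_n}(\xi_0^{(n)}))\ge \overline V_\pi(\xi_0^\star)\ge V^\pi_\rho(\xi')$, completing the proof.
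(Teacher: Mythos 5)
Your overall architecture is sound and, in fact, more explicit than the paper's own proof, which simply says to repeat the proof of Lemma~\ref{lem:lsc} with Corollary~\ref{cor:joint-cont:i-proj} substituted for Lemma~\ref{lem:cont:sol:i-proj}. Your upper-semicontinuity argument in $\pi$ is correct and clean: the feasible set of~\eqref{def:vstar:q} does not depend on $\pi$, so a $\delta$-suboptimal $\xi_0^\delta\in\Xi_0$ stays feasible as $\pi_n\to\pi$, and joint continuity of $(\pi,\xi_0)\mapsto V(f_\pi(\xi_0))$ on $\Pi_\epsilon\times\Xi_0$ finishes the job. You have also correctly isolated the genuinely delicate point, namely lower semicontinuity in $\pi$ when the near-minimizers $\xi_0^{(n)}$ accumulate at a point $\xi_0^\star\in\Gamma(\xi')\setminus\Xi_0$.

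The gap is in how you close that case. You reduce the identity $\widetilde h(\pi,\cdot)=\overline V_\pi(\cdot)$ to equicontinuity of the family $\{\pi\mapsto V(f_\pi(\zeta))\}_{\zeta}$ for $\zeta\in\Xi_0$ near $\xi_0^\star$, and you assert this is ``an immediate consequence of the compactness of $\Pi_\epsilon$ and Corollary~\ref{cor:joint-cont:i-proj}.'' It is not. Joint continuity on $\Pi_\epsilon\times\Xi_0$ gives uniform continuity only on sets of the form $\Pi_\epsilon\times K$ with $K$ a compact subset of $\Xi_0$; a neighborhood of a boundary point $\xi_0^\star\in\Xi\setminus\Xi_0$ intersected with $\Xi_0$ is not such a set, and in general a function jointly continuous on $\Pi_\epsilon\times\Xi_0$ can have a modulus of continuity in $\pi$ that blows up as $\zeta$ approaches $\Xi\setminus\Xi_0$ (there the induced kernel $Q$ becomes nearly reducible and stationary distributions are notoriously sensitive). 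So the step as justified would fail. The conclusion you need is nevertheless true, but it requires a separate quantitative argument: for $\pi\in\Pi_\epsilon$ and \emph{any} $Q\in\mc Q_0$, the Markov chain tree theorem represents $\mu_{\pi,Q}(x)$ as $N_x(\pi)/\sum_y N_y(\pi)$, where each $N_x$ is a polynomial with nonnegative coefficients in the entries of the matrix $P$ of~\eqref{expr:P:from:pi:Q}, each monomial containing each variable $\pi(a|s)\geq\epsilon$ with degree at most $SA-1$; the quotient rule then yields $|\partial\mu_{\pi,Q}(x)/\partial\pi(a|s)|\leq 2(SA-1)/\epsilon$ uniformly in $Q\in\mc Q_0$, hence a Lipschitz constant for $\pi\mapsto V(f_\pi(\zeta))$ that is uniform in $\zeta\in\Xi_0$. (Deriving this bound from the policy-gradient formula of Lemma~\ref{lemma:policy:gradient:OPL} instead would not be uniform in $Q$, since the differential action-value function is not uniformly bounded over $\mc Q_0$.) With that uniform Lipschitz estimate in place, your reduction and the rest of your argument go through.
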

\begin{proof}
The proof parallels that of Lemma~\ref{lem:lsc} with obvious minor modifications ({\em e.g.}, Corollary~\ref{cor:joint-cont:i-proj} must be used instead of Lemma~\ref{lem:cont:sol:i-proj}).
\end{proof}

We are now ready to prove that the out-of-sample disappointment of the distributionally robust estimator $V^{\pi}_\rho(\qhat)$ decays exponentially at rate~$\rho$ with the sample size~$T.$

\begin{theorem}[Out-of-sample disappointment of $V^{\pi}_\rho(\qhat)$]\label{thm:gen:bound}
For every $\rho>0$, $\pi\in\Pi_0$ and $\xi_0\in\Xi_0$, the distributionally robust estimator $V^{\pi}_\rho(\qhat)$ satisfies
\begin{align*}
    \limsup_{T\to\infty} \frac{1}{T}\log \mathbb{P}_{\xi_0}\left(V(f_\pi(\xi_0)) < V^{\pi}_\rho(\qhat)\right) \le -\rho. 
\end{align*}
\end{theorem}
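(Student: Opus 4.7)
The plan is to convert the disappointment event into an event of the form $\{\qhat\in\mathcal{D}\}$ for a Borel set $\mathcal{D}\subseteq\Xi$ whose conditional-relative-entropy distance from $\xi_0$ is at least $\rho$, and then invoke the large deviations upper bound of Theorem~\ref{thm:LDP:q}.

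Concretely, set
\[
    \mathcal{D} = \bigl\{\xi'\in\Xi : V^{\pi}_\rho(\xi') > V(f_\pi(\xi_0))\bigr\}.
\]
Because $V^{\pi}_\rho$ is lower semicontinuous on $\Xi$ by Lemma~\ref{lem:lsc} and $V(f_\pi(\xi_0))$ is a constant, $\mathcal{D}$ is open and hence Borel. By construction,
\[
    \mathbb{P}_{\xi_0}\bigl(V(f_\pi(\xi_0)) < V^{\pi}_\rho(\qhat)\bigr) = \mathbb{P}_{\xi_0}\bigl(\qhat\in\mathcal{D}\bigr),
\]
so Theorem~\ref{thm:LDP:q} will give the conclusion once we have shown that
\[
    \inf_{\xi'\in\mathcal{D}}\Lc{\xi'}{\xi_0} \ge \rho.
\]

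The key self-consistency step is to argue this infimum bound by contradiction. Suppose there were $\xi'\in\mathcal{D}$ with $\Lc{\xi'}{\xi_0}\le\rho$. Since $\xi_0\in\Xi_0$, the point $\xi_0$ would then be feasible in the optimization problem~\eqref{def:vstar:q} defining $V^{\pi}_\rho(\xi')$, yielding
\[
    V^{\pi}_\rho(\xi') \le V(f_\pi(\xi_0)),
\]
which contradicts the membership $\xi'\in\mathcal{D}$. Hence every $\xi'\in\mathcal{D}$ satisfies $\Lc{\xi'}{\xi_0} > \rho$, and the infimum over $\mathcal{D}$ is at least $\rho$.

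Combining these observations with the upper bound of Theorem~\ref{thm:LDP:q},
\[
    \limsup_{T\to\infty} \tfrac{1}{T}\log \mathbb{P}_{\xi_0}\bigl(\qhat\in\mathcal{D}\bigr) \le -\inf_{\xi'\in\mathcal{D}}\Lc{\xi'}{\xi_0} \le -\rho,
\]
yields the claim. There is no real obstacle beyond verifying the two ingredients already available to us: the lower semicontinuity of $V^{\pi}_\rho$ (needed to make $\mathcal{D}$ open, hence a legitimate target for the LDP upper bound) and the definition of $V^{\pi}_\rho$ itself (which is exactly what makes the contradiction argument for the infimum bound trivial). If any subtlety arises, it is only in ensuring that the LDP's \emph{upper-bound} side of Theorem~\ref{thm:LDP:q} applies to $\mathcal{D}$ without requiring a closure, which it does because the statement permits any Borel set.
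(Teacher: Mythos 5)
Your argument is correct and is essentially the paper's own proof: the paper defines the same disappointment set $\mc D=\{\xi'\in\Xi: V(\fe(\xi_0))<V^{\pi}_\rho(\xi')\}$, uses Lemma~\ref{lem:lsc} for measurability, and establishes $\Lc{\xi'}{\xi_0}>\rho$ on $\mc D$ by the same contraposition from the definition of $V^{\pi}_\rho$ in~\eqref{def:vstar:q}. The only (immaterial) difference is that the paper routes through the finite-sample bound of Corollary~\ref{ldp:finite:q} (Theorem~\ref{thm:finite:sample}) and obtains Theorem~\ref{thm:gen:bound} as a corollary, whereas you invoke the asymptotic upper bound of Theorem~\ref{thm:LDP:q} directly.
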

Theorem~\ref{thm:gen:bound} asserts that the out-of-sample disappointment of~$V^{\pi}_\rho(\qhat)$ decays as~$e^{-\rho T+o(T)}.$ Its proof is omitted because it is a direct consequence of the following finite-sample guarantee.
\begin{theorem}[Finite-sample guarantee for $\!V^{\pi}_\rho(\qhat)$]
\label{thm:finite:sample}
For every $\xi_0\in\Xi_0$ there is $\bar c>0$ such that the distributionally robust estimator $V^{\pi}_\rho(\qhat)$ satisfies the following for all $\rho> 0$, $\pi\in\Pi_0$ and~$T \in \mb N$.
\begin{align*}
    \frac{1}{T}\log  \mb P_{\xi_0}\left(V(f_\pi(\xi_0)) < V^{\pi}_\rho(\qhat)\right) \leq \frac{1}{T}(\log (T)+\bar c+ (SA)^2\log (T+1) )-\rho
\end{align*}
\end{theorem}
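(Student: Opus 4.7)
The plan is to reduce the claim to a direct application of the finite-sample large deviations bound in Corollary~\ref{ldp:finite:q} via a short monotonicity argument that exploits the definition of $V_\rho^\pi$.

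First, I would observe the following key implication: if $V(f_\pi(\xi_0)) < V^{\pi}_\rho(\qhat)$, then $\Lc{\qhat}{\xi_0} > \rho$. Indeed, suppose for contradiction that $\Lc{\qhat}{\xi_0} \leq \rho$. Since $\xi_0 \in \Xi_0$ by assumption, $\xi_0$ is then feasible for the minimization problem in~\eqref{def:vstar:q} evaluated at $\xi' = \qhat$. By the definition of the infimum this yields $V^\pi_\rho(\qhat) \leq V(f_\pi(\xi_0))$, contradicting the hypothesis. Therefore,
\begin{align*}
    \mb P_{\xi_0}\bigl(V(f_\pi(\xi_0)) < V^\pi_\rho(\qhat)\bigr) \leq \mb P_{\xi_0}\bigl(\qhat \in \mc D\bigr),
    \qquad \text{where } \mc D = \{\xi' \in \Xi : \Lc{\xi'}{\xi_0} > \rho\}.
\end{align*}

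Second, I would verify that $\mc D$ is a Borel set in $\Xi$. By Lemma~\ref{lem:properties:Dm}\,\ref{lem:continuity:Dm}, $\xi' \mapsto \Lc{\xi'}{\xi_0}$ is lower semicontinuous, so its strict superlevel set $\mc D$ is open and hence Borel. Applying Corollary~\ref{ldp:finite:q} to this $\mc D$ with $d = SA$ gives a constant $\bar c > 0$ depending only on $\xi_0$ such that for every $T \in \mb N$,
\begin{align*}
    \frac{1}{T}\log \mb P_{\xi_0}\bigl(\qhat \in \mc D\bigr) \leq \frac{1}{T}\bigl(\log T + \bar c + (SA)^2 \log(T+1)\bigr) - \inf_{\xi' \in \mc D} \Lc{\xi'}{\xi_0}.
\end{align*}

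Third, I would note that by the very definition of $\mc D$, every $\xi' \in \mc D$ satisfies $\Lc{\xi'}{\xi_0} > \rho$, hence $\inf_{\xi'\in\mc D}\Lc{\xi'}{\xi_0} \geq \rho$. Chaining this with the bound on $\mb P_{\xi_0}(\qhat \in \mc D)$ and the implication from step one yields the desired estimate. The constant $\bar c$ is inherited from Corollary~\ref{ldp:finite:q} and depends only on $\xi_0$, independently of $\rho$ and $\pi$, which matches the statement of the theorem.

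There is no real obstacle here: the argument is a clean application of the feasibility of $\xi_0$ in~\eqref{def:vstar:q} combined with the finite-sample LDP of Corollary~\ref{ldp:finite:q}. The only mildly delicate point is recognising that we can work with the strict superlevel set $\mc D$ (rather than its closure) so that the implication in step one is a genuine set inclusion; lower semicontinuity of $\mathsf{D_{mdp}}$ in its first argument makes this immediate.
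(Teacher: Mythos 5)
Your proposal is correct and follows essentially the same route as the paper: both reduce the disappointment event to an event about $\qhat$ landing in a set on which $\Lc{\cdot}{\xi_0}>\rho$ (via the contrapositive of the feasibility of $\xi_0$ in~\eqref{def:vstar:q}) and then invoke Corollary~\ref{ldp:finite:q}. The only cosmetic difference is that you take $\mc D$ to be the strict superlevel set of the rate function (so openness follows from Lemma~\ref{lem:properties:Dm}\,\ref{lem:continuity:Dm} and you get an inclusion of events), whereas the paper takes $\mc D$ to be the disappointment set itself (so openness is inherited from Lemma~\ref{lem:lsc} and the first step is an equality of probabilities); both are equally valid.
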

\begin{proof}
Define the disappointment set $\mc D=\{\xi'\in\Xi: V(\fe(\xi_0))<V^{\pi}_\rho(\xi')\}$, which is open because $V^{\pi}_\rho(\xi')$ is lower semicontinuous by virtue of Lemma~\ref{lem:lsc}. Then, we have
\begin{align*}
    \frac{1}{T}\log  \mb P_{\xi_0} \left(V(f_\pi(\xi_0)) < V^{\pi}_\rho(\qhat)\right)
    &=  \frac{1}{T}\log \mb P_{\xi_0}\left(\qhat\in\mc D\right)\\
    &\le\frac{(SA)^2}{T}\log (T+1) + \frac{1}{T}(\log (T)+\bar c)-\inf_{\xi'\in\mc D}\Lc{\xi'}{\xi_0}\\
    &\leq \frac{1}{T}((SA)^2\log (T+1)+\log (T)+\bar c  )-\rho \quad   \forall T\in\mb N,
\end{align*}
where the first inequality follows from Corollary~\ref{ldp:finite:q}, and the second inequality holds because $\Lc{\xi'}{\xi_0}>\rho$ for any $\xi'\in\mc D$. Indeed, the definition of~$V^{\pi}_\rho(\xi')$ in~\eqref{def:vstar:q} readily implies that $V(\fe(\xi_0))\ge V^{\pi}_\rho(\xi')$ whenever $\Lc{\xi'}{\xi_0}\le\rho.$ By contraposition, this means that $\Lc{\xi'}{\xi_0}>\rho$ whenever $V(\fe(\xi_0))<V^{\pi}_\rho(\xi')$. Recall also from Corollary~\ref{ldp:finite:q} that $\bar c$ depends only on~$\xi_0$.
\end{proof}

We now show that the distributionally robust predictor $V^{\pi}_\rho(\qhat)$ is efficient in the sense that it represents the least conservative estimator whose out-of-sample disappointment decays at rate~$\rho$.

\begin{theorem}[Statistical efficiency of $V^{\pi}_\rho(\qhat)$]\label{thm:pareto:vstar}
For every $\rho>0$, $\pi\in\Pi_0$ and lower semicontinuous function~$\widehat{V}^\pi:\Xi\to\mb R$ such that the corresponding reward estimator $\widehat V^\pi(\qhat)$ satisfies the out-of-sample guarantee~\eqref{eq:condition:admissibility}, we have $V^\pi_\rho(\xi')\geq \widehat V^\pi(\xi')$ for all~$\xi'\in \Xi$.
%
\end{theorem}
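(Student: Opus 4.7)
The plan is a proof by contradiction: suppose there exists $\xi'_0 \in \Xi$ with $V^\pi_\rho(\xi'_0) < \widehat V^\pi(\xi'_0)$, and derive a violation of the admissibility hypothesis~\eqref{eq:condition:admissibility}. Unfolding the infimum in the definition~\eqref{def:vstar:q} of $V^\pi_\rho(\xi'_0)$, I would first pick $\xi_0 \in \Xi_0$ satisfying $\Lc{\xi'_0}{\xi_0} \leq \rho$ and $V(f_\pi(\xi_0)) < \widehat V^\pi(\xi'_0)$, which exists precisely because $V^\pi_\rho(\xi'_0) < \widehat V^\pi(\xi'_0)$. The main obstacle is the boundary case $\Lc{\xi'_0}{\xi_0} = \rho$, because Theorem~\ref{thm:LDP:q} will then only deliver a lower bound of $-\rho$ on the exponential decay rate, which is insufficient to contradict~\eqref{eq:condition:admissibility}. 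To push $\xi_0$ into the strict interior of the $\rho$-ball around $\xi'_0$, I would invoke the radial monotonicity of $\mathsf{D}_{\mathsf{mdp}}$ in its second argument (Lemma~\ref{lem:properties:Dm}\,\ref{lem:radial:monotone:Dm}) to approximate $\xi_0$ by a sequence $\xi_0^{(n)} \in \Xi_0$ with $\Lc{\xi'_0}{\xi_0^{(n)}} < \rho$ and $\xi_0^{(n)} \to \xi_0$. Continuity of $V \circ f_\pi$ on $\Xi_0$ (Lemma~\ref{lem:cont:sol:i-proj}) then yields $V(f_\pi(\xi_0^{(n)})) < \widehat V^\pi(\xi'_0)$ for all sufficiently large $n$. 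Relabelling, I may thus assume without loss of generality that $\Lc{\xi'_0}{\xi_0} < \rho$ strictly, while still retaining $V(f_\pi(\xi_0)) < \widehat V^\pi(\xi'_0)$.

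Next I would introduce the disappointment set
\[
    \mc O = \big\{ \xi' \in \Xi : \widehat V^\pi(\xi') > V(f_\pi(\xi_0)) \big\},
\]
which is open by the assumed lower semicontinuity of $\widehat V^\pi$ and contains $\xi'_0$. Applying the large deviations lower bound of Theorem~\ref{thm:LDP:q} to the model $\xi_0 \in \Xi_0$ and the open set $\mc O$ then gives
\[
    \liminf_{T\to\infty} \frac{1}{T} \log \mb P_{\xi_0}\!\left( \qhat \in \mc O \right) \;\geq\; -\inf_{\xi' \in \mc O} \Lc{\xi'}{\xi_0} \;\geq\; -\Lc{\xi'_0}{\xi_0} \;>\; -\rho.
\]
However, $\{\qhat \in \mc O\}$ is exactly the disappointment event $\{V(f_\pi(\xi_0)) < \widehat V^\pi(\qhat)\}$ under the model $\xi_0$, and the admissibility hypothesis~\eqref{eq:condition:admissibility} forces the $\limsup$ of the same log-probability to be at most $-\rho$. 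Since $\liminf \le \limsup$, this is a contradiction and completes the argument.

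In summary, the proof hinges on two ingredients: (i) a perturbation step, based on radial monotonicity of $\mathsf{D}_{\mathsf{mdp}}$ combined with continuity of $V \circ f_\pi$, that moves the competing model $\xi_0$ strictly into the interior of the $\rho$-ball around $\xi'_0$; and (ii) the LDP lower bound of Theorem~\ref{thm:LDP:q}, which turns any open neighborhood of such an interior model into a disappointment probability decaying strictly more slowly than $e^{-\rho T}$. The lower semicontinuity of $\widehat V^\pi$ is essential in step~(ii), as it is what guarantees that the disappointment set $\mc O$ is open and thus amenable to the LDP lower bound.
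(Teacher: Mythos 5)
Your proof is correct, and its engine is the same as the paper's: a contradiction built from the radial monotonicity of $\mathsf{D}_{\mathsf{mdp}}$ (to push a competing model strictly inside the $\rho$-ball), the continuity of $V\circ f_\pi$ on $\Xi_0$, the openness of the disappointment set supplied by the lower semicontinuity of $\widehat V^\pi$, and the large deviations lower bound of Theorem~\ref{thm:LDP:q}. Where you genuinely diverge is in the treatment of $\xi'\in\Xi\setminus\Xi_0$. The paper runs the contradiction argument only for $\xi'\in\Xi_0$ (where it can select an \emph{exact} minimizer $\xi_1^\star$ of~\eqref{def:vstar:q}, since attainment is only guaranteed there) and then handles $\Xi\setminus\Xi_0$ by a separate approximation argument: it takes $1/n$-optimal points $\xi_n'\in\Xi_0$ converging to $\xi'$, applies the already-proved inequality on $\Xi_0$, and uses the convexity of $\Lc{\cdot}{\xi}$ in its first argument (Lemma~\ref{lem:properties:Dm}\,\ref{Dm:convexity}) to build feasible convex-combination perturbations that transfer the bound back to $\xi'$. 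You bypass this second stage entirely by never asking for the infimum in~\eqref{def:vstar:q} to be attained: any feasible $\xi_0$ with $V(f_\pi(\xi_0))<\widehat V^\pi(\xi'_0)$ exists for \emph{every} $\xi'_0\in\Xi$ (feasibility of~\eqref{def:vstar:q} for $\rho>0$ is established in the paper), and since Lemma~\ref{lem:properties:Dm}\,\ref{lem:radial:monotone:Dm} is stated for all $\xi'\in\Xi$, the perturbation step and the LDP lower bound go through verbatim. The result is a shorter, unified proof that dispenses with the paper's second part and with the convexity property of $\mathsf{D}_{\mathsf{mdp}}$; the paper's two-stage route buys only the availability of an attained minimizer, which your argument shows is not needed.
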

 
\begin{proof}
We first show that the claim holds for all $\xi'\in\Xi_0$. Assume for the sake of contradiction that there exists a lower semicontinuous function $\widehat{V}^\pi:\Xi\to\mb R$ with $\widehat V^\pi(\qhat)$  satisfying~\eqref{eq:condition:admissibility} and an estimator realization $\xi_0'\in\Xi_0$ with $\widehat{V}^\pi ({\xi_0'})> V^{\pi}_\rho({\xi_0'})$,  and define $\epsilon_0= \widehat V^\pi({\xi_0'})-V^{\pi}_\rho({\xi_0'})>0$. Next, let $\xi^\star_1\in\Xi_0$ be a minimizer of problem~\eqref{def:vstar:q} for $\xi'=\xi_0'$, which exists because $\xi_0'\in\Xi_0$. We thus have $\Lc{\xi_0'}{\xi^\star_1}\le\rho$ by feasibility and $V^{\pi}_\rho(\xi_0')=V(\fe(\xi^\star_1))$ by optimality. The radial monotonicity of $\mathsf{D_{mdp}}$ established in~Lemma~\ref{lem:properties:Dm}\ref{lem:radial:monotone:Dm} further guarantees that there exists a sequence $\{\xi^\star_k\}_{k\in\mb N}$ in $\Xi_0$  such that $\Lc{\xi_0'}{\xi^\star_k}<\rho $ for all $k\in\mb N$ and $\lim_{k\to\infty}\xi^\star_k=\xi^\star_1$. 
In addition, the continuity of $V(\fe(\xi^\star_1))$ on~$\Xi_0$ implies that there exists a model $\xi^\star_0\in\Xi_0$ with $V(\fe(\xi^\star_0))<V(\fe(\xi^\star_1))+\epsilon_0/2$ and $\rho_0=\Lc{\xi_0'}{\xi^\star_0}<\rho.$ In summary, we have thus shown that
\begin{align*}
    V^{\pi}_\rho(\xi_0')>V(\fe(\xi^\star_1))-\frac{\epsilon_0}{2} >V(\fe(\xi^\star_0))-\epsilon_0,
\end{align*}
which allows us to conclude that
\begin{align}\label{ineq:optimality-proof-predictor}
    \widehat V^\pi({\xi_0'})=V^{\pi}_\rho({\xi_0'})+\epsilon_0>V(\fe(\xi^\star_0)).
\end{align}
Next, we introduce the disappointment set 
$\mc D=\{\xi'\in\Xi: V(\fe(\xi^\star_0))<\widehat V^\pi(\xi')\}$ and observe that ${\xi_0'}\in\mc D.$ As~$\widehat V^\pi $ is lower semicontinuous  on $\Xi$ by assumption, the set $\Xi\backslash\mc D=\{\xi'\in\Xi: V(\fe(\xi^\star_0))\ge\widehat V^\pi(\xi')\}$ is closed, which in turn implies that $\mc D$ is open. We thus find
\begin{align*}
  \limsup_{T\to\infty} \frac{1}{T}\log  \mathbb{P}_{\xi^\star_0}\left(V(\fe(\xi^\star_0))< \widehat V^\pi(\qhat)\right) 
  &=  \limsup_{T\to\infty} \frac{1}{T}\log \mb P_{\xi^\star_0}\big(\qhat\in\mc D\big)\\
 & \geq -\inf_{\xi'\in\mathrm{int} \mc D} \mathsf{D_{mdp}}(\xi'\|\xi^\star_0)\geq -\rho_0>-\rho,
  \end{align*}
where the first inequality follows from Theorem~\ref{thm:LDP:q}, and the second inequality holds because $\xi_0'\in \mc D=\mathrm{int} \mc D$ and $ \mathsf{D_{mdp}}(\xi_0'\|\xi^\star_0) = \rho_0.$  Hence, the out-of-sample disappointment of~$\widehat V^\pi(\qhat)$ fails to decay at rate~$\rho$, which contradicts our initial assumption. Thus, the claim follows.

Next, assume that~$\xi'\in\Xi\backslash\Xi_0$. For any $n\in\mb N$, let $\xi'_n\in\Xi_0$ be a $1/n$-optimal solution of
\begin{align}
    \label{eq:auxiliary-inf-problem}
    \inf_{\zeta'\in\Xi_0} \{\widehat V^\pi(\zeta') : \|\zeta' - \xi'\|_2 \le 1/n \}.   
\end{align}
The feasibility of $\xi_n'$ in~\eqref{eq:auxiliary-inf-problem} implies that $\|\xi'_n-\xi'\|_2\le 1/n$ such that $\lim_{n\to\infty} \xi_n' = \xi'$. Thus, we have
\begin{equation}
\label{eq:vhatpi-estimate}
\begin{aligned}
   \widehat V^\pi(\xi') \le \liminf_{n\to\infty} \widehat V^\pi(\xi_n') &=  \lim_{n\to\infty} \inf_{\zeta'\in\Xi_0} \{ \widehat V^\pi(\zeta')  :  \|\zeta' - \xi'\|_2 \le 1/n\} \\
   &\le \lim_{n\to\infty}\inf_{\zeta'\in\Xi_0}\{ V_\rho^\pi(\zeta')  :  \|\zeta' - \xi'\|_2 \le 1/n\}\\
   &= \lim_{n\to\infty} \inf_{\zeta', \zeta\in\Xi_0}\{ V(f_\pi(\zeta))  :  \Lc{\zeta'}{\zeta}\le \rho,\, \|\zeta' - \xi'\|_2 \le 1/n\},
\end{aligned}
\end{equation}
where the first inequality follows from the assumed lower semicontinuity of~$\widehat V^\pi$. The equality holds because~$\xi_n'$ is a $1/n$-optimal solution of~\eqref{eq:auxiliary-inf-problem}, and the second inequality follows from the first part of the proof, where we have shown that $\widehat V^\pi(\zeta') \le V_\rho^\pi(\zeta') $ for all $\zeta'\in\Xi_0$. The second equality, finally, exploits the definition of~$V^\pi_\rho(\zeta')$.
Fix now any $\epsilon>0$, and let~$\zeta_\epsilon\in\Xi_0$ be an $\epsilon$-optimal solution of~\eqref{def:vstar:q}. Also, set $\zeta'_{\epsilon,n}=(1-1/(2n))\xi'+1/(2n)\zeta_\epsilon$, and note that $\zeta'_{\epsilon,n}\in\Xi_0$ because~$\zeta'_{\epsilon,n}$ represents a strict convex combination of $\xi'\in\Xi$ and $\zeta_\epsilon\in\Xi_0$. By construction, we thus have
\[
    \|\zeta'_{\epsilon,n}-\xi'\|_2 =\| \zeta_\epsilon - \xi'\|_2/(2n) \le (\|\xi'\|_2 + \|\zeta_\epsilon\|_2 )/(2n) \le 1/n,
\]
where the last inequality holds because $\zeta_\epsilon, \xi' \in \Delta(\mc S\times \mc A\times \mc S)$.
In addition, we find 
\[
    \Lc{\zeta'_{\epsilon,n}}{\zeta_\epsilon}
    \le (1-1/(2n))\Lc{\xi'}{\zeta_\epsilon} + \Lc{\zeta_\epsilon}{\zeta_\epsilon}/(2n) \le \rho ,
\]
where the first inequality follows from the convexity of the conditional relative entropy in its first argument (see Lemma~\ref{lem:properties:Dm}\,\ref{Dm:convexity}), and the second inequality holds because~$\zeta_\epsilon$ is feasible in~\eqref{def:vstar:q}. In summary, we have shown that $(\zeta'_{\epsilon,n},\zeta_\epsilon)$ constitutes a feasible solution for the last minimization problem in~\eqref{eq:vhatpi-estimate}. This observation readily implies that
\begin{equation*}
\begin{aligned}
   \widehat V^\pi(\xi') & \le V(f_\pi(\zeta_\epsilon)) \le V^\pi_\rho(\xi') + \epsilon.
\end{aligned}
\end{equation*}
where the second inequality holds because $\zeta_\epsilon$ is an~$\epsilon$-optimal solution of~\eqref{def:vstar:q}. As this estimate holds for every~$\epsilon>0$, we may finally conclude that $\widehat V^\pi(\xi')\le V^\pi_\rho(\xi')$. Hence, the claim follows.
\end{proof} 

Theorem~\ref{thm:pareto:vstar} is inspired by~\citep[Theorem 3.1]{ref:Sutter-19}, which establishes a similar result for general data-generating processes that are {\em not} affected by a distribution shift. On the technical level, Theorem~\ref{thm:pareto:vstar} is also more general because it establishes the optimality of the distributionally robust estimator within the class of estimators of the form $\widehat V^\pi(\qhat)$ induced by lower semicontinuous functions $\widehat V^\pi$, whereas \citep[Theorem 3.1]{ref:Sutter-19} focuses exclusively on estimators induced by continuous functions. Theorem~\ref{thm:pareto:vstar} also  implies that $V^\pi_\rho(\qhat)\geq \widehat V^\pi(\qhat)$ for all~$T\in\mb N$ because $V^\pi_\rho(\xi')\geq \widehat V^\pi(\xi')$ holds also for all~$\xi'\in \Xi\backslash\Xi_0$. In contrast, the optimality result in~\citep[Theorem 3.1]{ref:Sutter-19} is asymptotic, that is, it holds only in the limit when~$T$ tends to infinity.

The following proposition inspired by \citep[Theorem 3]{li2021distributionally} shows that the estimator~$V^{\pi}_\rho$ becomes asymptotically consistent if the radius~$\rho$ of the ambiguity set shrinks with~$T$.

\begin{proposition}[Asymptotic consistency  of $V^{\pi}_\rho(\qhat)$]\label{thm:asympt:consistency}
\!If $\{\rho_T\}_{T=1}^\infty$ is a sequence of non-negative radii with $\lim_{T\to\infty} \rho_T=0$, then
\begin{align}
    \lim_{T\to\infty} V^{\pi}_{\rho_T}(\qhat)
    =V({\fe(\xi_0)}) \quad \mathbb{P}_{\xi_0}\text{-}a.s. \quad \forall {\xi_0}\in\Xi_0.
\end{align}
\end{proposition}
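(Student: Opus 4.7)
The plan is to sandwich $V^{\pi}_{\rho_T}(\qhat)$ between matching upper and lower bounds that both converge to $V(f_\pi(\xi_0))$ almost surely. The starting point is that $\xi_0 \in \Xi_0$ induces an irreducible Markov chain on the state-action pairs, so the Markov law of large numbers yields $\qhat \to \xi_0$ componentwise $\mb P_{\xi_0}$-almost surely; combined with $\xi_0 > 0$ on its support, this also implies $\supp(\qhat) = \supp(\xi_0)$ for all $T$ large enough, so $\qhat \in \Xi_0$ eventually.

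For the upper bound, as soon as $\qhat \in \Xi_0$, the choice $\xi_0' = \qhat$ is feasible in the infimum defining $V^{\pi}_{\rho_T}(\qhat)$ because $\Lc{\qhat}{\qhat} = 0 \le \rho_T$. Hence $V^{\pi}_{\rho_T}(\qhat) \le V(f_\pi(\qhat))$, and the continuity of $f_\pi$ on $\Xi_0$ (Lemma~\ref{lem:cont:sol:i-proj}) combined with the linearity of $V$ yields $\limsup_{T\to\infty} V^{\pi}_{\rho_T}(\qhat) \le V(f_\pi(\xi_0))$ almost surely.

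For the lower bound, for $T$ large enough the infimum is attained by some $\xi_T^\star \in \Xi_0$ with $\Lc{\qhat}{\xi_T^\star} \le \rho_T$ and $V^{\pi}_{\rho_T}(\qhat) = V(f_\pi(\xi_T^\star))$, by the discussion around~\eqref{eq:Gamma-Xi-Xi0}. By compactness of $\Xi$, any subsequence of $\{\xi_T^\star\}$ admits a convergent sub-subsequence $\xi_{T_k}^\star \to \bar\xi \in \Xi$. Lower semicontinuity of $\mathsf{D}_{\mathsf{mdp}}$ (Lemma~\ref{lem:properties:Dm}\,\ref{lem:continuity:Dm}) together with $\qhat_{T_k} \to \xi_0$ and $\rho_{T_k} \to 0$ then give
$\Lc{\xi_0}{\bar\xi} \le \liminf_k \Lc{\qhat_{T_k}}{\xi_{T_k}^\star} \le \liminf_k \rho_{T_k} = 0$, so $\Lc{\xi_0}{\bar\xi} = 0$.

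The main obstacle—and the key step—is to deduce from $\Lc{\xi_0}{\bar\xi} = 0$ that $V(f_\pi(\bar\xi)) = V(f_\pi(\xi_0))$, even though $\bar\xi$ need not equal $\xi_0$. Finiteness of the conditional relative entropy first forces $\supp(\xi_0) \subseteq \supp(\bar\xi)$, so the graph induced by $\supp(\bar\xi)$ contains the strongly connected graph induced by $\supp(\xi_0)$ and is itself strongly connected; combined with the balanced marginals of $\bar\xi$, this places $\bar\xi \in \Xi_0$. Rewriting the conditional relative entropy in the form
$\Lc{\xi_0}{\bar\xi} = \sum_{s,a} \mu_0(s,a)\, \mathsf{D}\!\left(Q_0(\cdot|s,a) \,\|\, Q_{\bar\xi}(\cdot|s,a)\right) = 0$
and using $\mu_0 > 0$ (an immediate consequence of $\pi_0 \in \Pi_0$ and irreducibility) then forces $Q_{\bar\xi} = Q_0$ pointwise. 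Since $f_\pi$ depends on its argument only through the induced transition kernel (Definition~\ref{def:f}), we conclude $f_\pi(\bar\xi) = f_\pi(\xi_0)$. Continuity of $V \circ f_\pi$ on $\Xi_0$ gives $V(f_\pi(\xi_{T_k}^\star)) \to V(f_\pi(\bar\xi)) = V(f_\pi(\xi_0))$; as every convergent subsequence shares the same limit, the full sequence converges, establishing $\liminf_{T \to \infty} V^{\pi}_{\rho_T}(\qhat) \ge V(f_\pi(\xi_0))$ almost surely and completing the proof.
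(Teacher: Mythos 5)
Your proof is correct, but it takes a more self-contained route than the paper. The paper's own proof is essentially a pointer: it fixes a sample path with $\qhat\to\xi_0$ and then invokes the proof of Theorem~3 of \citet{li2021distributionally} to assert that the minimizers $\xi^\star_T$ of~\eqref{def:vstar:q} exist for large $T$ and converge to $\xi_0$, after which continuity of $V\circ\fe$ on $\Xi_0$ finishes the argument. You instead supply the missing convergence argument explicitly: a compactness/subsequence extraction, joint lower semicontinuity of $\mathsf{D}_{\mathsf{mdp}}$ to force $\Lc{\xi_0}{\bar\xi}=0$ at any accumulation point $\bar\xi$, the support/strong-connectivity argument to place $\bar\xi\in\Xi_0$, and the decomposition~\eqref{Dm:alt:expression} to conclude that the induced transition kernels agree, so that $\fe(\bar\xi)=\fe(\xi_0)$ even if $\bar\xi\neq\xi_0$. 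This is in fact slightly more economical than what the cited result delivers, since you only need agreement of the kernels, not convergence of $\xi^\star_T$ to $\xi_0$ itself. What the paper's approach buys is brevity; what yours buys is a verifiable, self-contained argument that also makes transparent exactly which structural properties ($\mu_0>0$, the vanishing of each nonnegative summand in~\eqref{Dm:alt:expression}, and the fact that $\fe$ depends on its argument only through $Q$) are doing the work. Two cosmetic points: your displayed identity for $\Lc{\xi_0}{\bar\xi}$ omits the nonnegative policy term $\sum_{s,a}\mu_0(s,a)\,\mathsf{D}(\pi_0(\cdot|s)\|\pi_{\bar\xi}(\cdot|s))$ from~\eqref{Dm:alt:expression} (harmless, since both summands are nonnegative and their sum is zero), and the claim $\supp(\qhat)=\supp(\xi_0)$ for large $T$ can fail by one edge because of the ghost transition in~\eqref{MDP:estimator}; the inclusion $\supp(\qhat)\supseteq\supp(\xi_0)$, which is all you need for $\qhat\in\Xi_0$, does hold eventually almost surely.
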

\begin{proof}
Fix any ${\xi_0}\in\Xi_0$ and any sample~$\omega\in\Omega$ for which $\qhat(\omega)\in\Xi$ converges to $\xi_0$ as~$T$ grows. For this particular sample~$\omega$, one can proceed as in the proof of~\citep[Theorem 3]{li2021distributionally} to show that problem~\eqref{def:vstar:q} with $\xi'=\qhat(\omega)$ is solvable for all sufficiently large~$T$ and that the corresponding minimizer~$\xi_{T}^{\star}(\omega)\in\Xi_0$ converges to~$\xi_0$ as $T$ grows. This allows us to conclude that 
$$
\lim_{T \rightarrow \infty} V^{\pi}_{\rho_T}(\qhat(\omega))=\lim _{T \rightarrow \infty}V(\fe(\xi_{T}^{\star}(\omega)))=V(\fe({\xi_0})),
$$
where the first equality follows from the construction of $\xi^{\star}_T(\omega)$, whereas the second equality holds because $V(\fe({\xi_0}))$ is continuous on $\Xi_0$. 
The claim now follows because $\qhat(\omega)$ converges to~${\xi_0}$ for $\mb P_{\xi_0}$-almost every sample~$\omega$.
\end{proof}

\section{Distributionally Robust Offline Policy Optimization}\label{ssec:OPL:dist:shift}

The distributionally robust estimator~$V^\pi_\rho(\qhat)$ analyzed in Section~\ref{ssec:OPE:dist:shift} solves the off-policy evaluation problem in a statistically efficient manner. Thus, it estimates the average reward of a fixed evaluation policy~$\pi$ based on data generated under a behavioral policy~$\pi_0\neq\pi$. We now use~$V^\pi_\rho(\qhat)$ as a building block for an offline policy optimization problem that learns an optimal policy~$\pi$ from data generated under~$\pi_0$. To this end, we define $\Pi_\epsilon=\{\pi\in\Pi: \pi(a|s) \ge \epsilon\;\forall s\in\mc S,\, a\in\mc A\}$ as the family of all exploratory policies that select any action with probability at least $\epsilon> 0$, where $\epsilon$ is sufficiently small to ensure that $\Pi_\epsilon\neq \emptyset$.
We then introduce the distributionally robust policy estimator~$\pi_\rho(\qhat)$ corresponding to a given radius~$\rho\geq 0$, where $\pi_\rho:\Xi\to\Pi_\epsilon$ is any Borel measurable function with
\begin{align}
    \label{def:pistar}
    \pi_\rho(\xi')\in\argmax_{\pi\in\Pi_\epsilon} V^\pi_\rho(\xi') \quad \forall \xi'\in\Xi.
\end{align} 
Restricting~$\Pi$ to~$\Pi_\epsilon$ in~\eqref{def:pistar} simplifies the analysis of~$\pi_\rho(\qhat)$ at the expense of sacrificing flexibility. Note, however, that any policy~$\pi\in\Pi$ gives rise to an $\epsilon$-greedy policy~$\pi_\epsilon\in\Pi_\epsilon$ defined through $\pi_\epsilon(a|s)= \epsilon/A + (1-\epsilon) \pi(a|s)$ for all $a\in\mc A$ and $s\in\mc S$, which approximates~$\pi$ arbitrarily closely.

In the remainder of this section, we investigate the statistical properties of the distributionally robust policy estimator $\pistar(\qhat)$. Specifically, we show that it is efficient in comparison to all estimators $\widehat\pi(\qhat)$ induced by admissible policy functions~$\widehat \pi$ in the sense of the following definition.


\begin{definition}[Admissible policy function]\label{def:pi:admissibility}
    Given~$\rho> 0$ and $\epsilon>0$, a policy function $\pihat:\Xi\to \Pi_\epsilon$ is called admissible if there exist value functions $\widehat V^\pi:\Xi\to\mb R$ parametrized by $\pi\in\Pi_\epsilon$ such that
    \begin{enumerate}[label = (\roman*)]
      \item \label{eq:item:admissible:conti} $\Vhat^{\pi}(\xi')$ is continuous in~$\pi$ and lower semicontinuous in $\xi'$ throughout $\Pi_\epsilon\times\Xi$; 
      \item \label{eq:item:admissible:greedy} $\pihat(\xi')$ is Borel measurable in~$\xi'\in \Xi$, and $\pihat(\xi')\in\arg\max_{\pi\in\Pi_\epsilon}\Vhat^{\pi}(\xi')$ for every~$\xi'\in\Xi_0$;
      \item \label{eq:item:admissible:ucb} 
      the out-of-sample disappointment of the policy $\pihat(\qhat)$ decays exponentially at rate~$\rho$, that is,
      \begin{equation}\label{eq:condition:pi:admissibility}
        \limsup_{T\to\infty} \frac{1}{T} \log\mathbb{P}_{\xi_0}\left( V(f_{\pihat(\qhat)}(\xi_0)) <  \Vhat^{\pihat(\qhat)}(\qhat)\right) \leq -\rho \quad \forall  \xi_0\in\Xi_0,
    \end{equation}
    where~$f_\pi$ is the distribution shift function and $\qhat$ the empirical estimator defined in~\eqref{MDP:estimator}.
    \end{enumerate}
\end{definition}

We say that the admissibility of a policy function $\pihat$ is certified by the corresponding family $\Vhat^{\pi}$, $\pi\in\Pi_\epsilon$, of value functions. We emphasize that the condition of admissibility is very weak. For example, every continuous policy function $\pihat:\Xi\to \Pi_\varepsilon$ is admissible. Indeed, $\pihat(\xi')$ is the unique maximizer of the continuous function $\widehat V^\pi(\xi')= -\|\pi-\pihat(\xi')\|^2_2-C$ across all $\pi\in\Pi_\epsilon$, where $C$ is an arbitrary real constant. To ensure that~\eqref{eq:condition:pi:admissibility} holds, it suffices to make $C$ large. On a related note, the following lemma shows that for every family of value function~$\widehat V^\pi$, $\pi\in\Pi_\epsilon$, satisfying condition~\ref{eq:item:admissible:conti} there exists a policy function~$\widehat\pi$ satisfying condition~\ref{eq:item:admissible:greedy} of Definition~\ref{def:pi:admissibility}. 

\begin{lemma}[Borel measurability of~$\pihat$]\label{lem:pi:quasi:continuous}
If $\epsilon>0$ and $\Vhat^{\pi}(\xi')$ is continuous in~$\pi$ and lower semicontinuous in $\xi'$ throughout $\Pi_\epsilon\times\Xi$, then there is a Borel measurable function $\pihat:\Xi\to\Pi_\epsilon$ with 
\[
    \pihat(\xi')\in\arg\max_{\pi\in\Pi_\epsilon}\Vhat^{\pi}(\xi')\quad \forall \xi'\in\Xi.
\]
\end{lemma}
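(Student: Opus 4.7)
The plan is to recognise this as a standard measurable selection problem and to invoke a measurable maximum theorem in the spirit of Aliprantis and Border (Infinite Dimensional Analysis, Theorem~18.19). First I would set up the geometry. The set $\Pi_\epsilon = \{\pi \in \Delta(\mc A)^S : \pi(a|s)\ge \epsilon\ \forall s,a\}$ is a closed subset of the compact probability simplex in $\mb R^{SA}$, hence a compact (and Polish) metric space. The hypothesis that $\widehat V^\pi(\xi')$ is continuous in $\pi\in\Pi_\epsilon$ implies, by the Weierstrass extreme value theorem, that for every fixed $\xi'\in\Xi$ the set $\Gamma(\xi') := \arg\max_{\pi\in\Pi_\epsilon}\widehat V^\pi(\xi')$ is non-empty and compact.

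Second, I would verify that $\widehat V^\pi(\xi')$ is a Carath\'eodory-type function on $\Xi\times\Pi_\epsilon$ in the sense required by the measurable maximum theorem. Continuity in $\pi$ for every fixed $\xi'$ is given. For each fixed $\pi$, $\widehat V^\pi(\xi')$ is lower semicontinuous in $\xi'$, which in particular makes it Borel measurable in $\xi'$ (the sublevel sets are closed). Hence $\widehat V^\pi(\xi')$ is Borel measurable in $\xi'$ for every $\pi$ and continuous in $\pi$ for every $\xi'$, i.e.\ a Carath\'eodory function.

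Third, I would apply the measurable maximum theorem to the constant set-valued map $\psi(\xi')\equiv \Pi_\epsilon$, which is trivially weakly measurable with non-empty compact values, and to the objective $\widehat V^\pi(\xi')$. The theorem then guarantees that the value function $\varphi(\xi')=\max_{\pi\in\Pi_\epsilon} \widehat V^\pi(\xi')$ is Borel measurable and that the argmax correspondence $\Gamma$ admits a Borel measurable selection $\widehat\pi:\Xi\to\Pi_\epsilon$, which is exactly the required map. If a self-contained construction is preferred, one can equivalently observe that, by separability of $\Pi_\epsilon$ and continuity of $\widehat V^\pi(\xi')$ in $\pi$, $\varphi(\xi')=\sup_{i\in\mb N}\widehat V^{\pi_i}(\xi')$ for a countable dense set $\{\pi_i\}\subseteq\Pi_\epsilon$, hence $\varphi$ is itself lower semicontinuous on $\Xi$; then the graph $\{(\xi',\pi):\widehat V^\pi(\xi')=\varphi(\xi')\}$ is Borel (because $\widehat V^\pi(\xi')$ is jointly Borel measurable as a Carath\'eodory function) and has non-empty compact $\pi$-sections, so the Kuratowski--Ryll-Nardzewski selection theorem produces a Borel measurable selection.

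The main obstacle is the asymmetry of the continuity assumption: $\widehat V^\pi(\xi')$ is only lower semicontinuous (not continuous) in $\xi'$, so classical Berge-type theorems that require joint continuity do not apply directly and upper hemicontinuity of $\Gamma$ cannot be asserted. This is precisely why one must invoke a measurable (rather than topological) maximum theorem, whose hypotheses—Carath\'eodory objective plus compactness of the feasible set—are satisfied in our setting.
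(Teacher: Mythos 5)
Your proposal is correct and follows essentially the same route as the paper: both arguments reduce the problem to a measurable selection from the argmax of a Carath\'eodory-type objective (continuous in $\pi$, Borel measurable in $\xi'$ via lower semicontinuity) over the fixed compact set $\Pi_\epsilon$, the only difference being that you invoke the measurable maximum theorem of Aliprantis and Border whereas the paper packages the same facts through the normal-integrand machinery of Rockafellar and Wets (Examples~14.29 and~14.32, Theorem~14.37 and Corollary~14.6).
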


\begin{proof}
As~$\Pi_\epsilon$ is compact and~$\Vhat^{\pi}(\xi')$ is continuous in~$\pi\in\Pi_\epsilon$ for every fixed~$\xi'\in\Xi$, the optimization problem $\max_{\pi\in\Pi_\epsilon}\Vhat^{\pi}(\xi')$ is solvable. Hence, the solution mapping $\argmax_{\pi\in\Pi_\epsilon} \widehat V^\pi(\xi')$ is non-empty- and closed-valued throughout~$\Xi$. 
Next, note that $\widehat V^\pi(\xi')$ is continuous in $\pi$ and lower semicontinuous and thus Borel measurable in~$\xi'$ throughout $\Pi_\epsilon\times\Xi_0$. In addition, the feasible set~$\Pi_\epsilon$ is constant and thus trivially Borel measurable in~$\xi$ throughout~$\Xi$. By \citep[Examples~14.29 \&~14.32]{rockafellar2009variational}, the function $\psi:\Pi_\epsilon\times\Xi\to [-\infty,\infty)$ defined through $\psi(\pi,\xi')=-\widehat V^\pi(\xi')$ if $\pi\in\Pi_\epsilon$; $=\infty$ otherwise constitutes a normal integrand in the sense of \citep[Definition~14.27]{rockafellar2009variational}. Consequently, the solution mapping $\arg\max_{\pi\in\Pi_\epsilon} \widehat V^\pi(\xi')$ is Borel measurable in~$\xi'$ throughout $\Xi$ by virtue of \citep[Theorem~14.37]{rockafellar2009variational}. By \citep[Corollary~14.6]{rockafellar2009variational}, the solution mapping thus admits a Borel measurable selector.
\end{proof}



Together with Corollary~\ref{cor:admissible:dr-policy}, Lemma~\ref{lem:pi:quasi:continuous} implies that the distributionally robust policy function~$\pi_\rho$ is well-defined, that is, there is indeed a Borel measurable function~$\pi_\rho$ satisfying~\eqref{def:pistar}. In order to show that~$\pi_\rho$ is admissible in the sense of Definition~\ref{def:pi:admissibility}, it thus suffices to verify condition~\ref{eq:item:admissible:ucb}.

\begin{theorem}[Out-of-sample disappointment of $\pistar(\qhat)$]\label{thm:gen:bound:pi}
For every~$\rho>0$, $\epsilon>0$ and $ \xi_0\in\Xi_0$, the distributionally robust policy estimator $\pistar(\qhat)$ satisfies
\begin{align*}
    \limsup_{T\to\infty} \frac{1}{T} \log\mathbb{P}_{\xi_0}\left( V(\fpistar(\xi_0)) <  V^{\pistar(\qhat)}_\rho(\qhat)\right) \leq -\rho.
\end{align*}
\end{theorem}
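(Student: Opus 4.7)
The plan is to follow the blueprint of Theorems~\ref{thm:gen:bound} and~\ref{thm:finite:sample}, but with the extra twist that the evaluation policy $\pistar(\qhat)$ is now data-dependent. I would introduce the joint disappointment set
\[
\mc D = \left\{\xi'\in\Xi : V(f_{\pistar(\xi')}(\xi_0)) < V^{\pistar(\xi')}_\rho(\xi')\right\}
\]
and rewrite the probability in the theorem statement as $\mb P_{\xi_0}(\qhat \in \mc D)$. Corollary~\ref{ldp:finite:q} then delivers the desired exponential bound provided (i)~$\mc D$ is a Borel subset of $\Xi$ and (ii)~$\inf_{\xi' \in \mc D}\Lc{\xi'}{\xi_0} \ge \rho$.

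For (ii), I would argue by contraposition exactly as in Theorem~\ref{thm:finite:sample}: if $\xi' \in \Xi$ satisfies $\Lc{\xi'}{\xi_0} \le \rho$, then $\xi_0 \in \Xi_0$ is feasible in the program~\eqref{def:vstar:q} defining $V^{\pistar(\xi')}_\rho(\xi')$ at the policy $\pistar(\xi')\in\Pi_\epsilon\subseteq\Pi_0$, so $V^{\pistar(\xi')}_\rho(\xi') \le V(f_{\pistar(\xi')}(\xi_0))$, and hence $\xi' \notin \mc D$. The containment $\mc D \subseteq \{\xi'\in\Xi : \Lc{\xi'}{\xi_0} > \rho\}$ follows immediately.

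The main (and essentially only) technical obstacle is establishing~(i), the Borel measurability of $\mc D$. I would assemble it from three ingredients already available in the paper. Corollary~\ref{cor:admissible:dr-policy} states that $V^\pi_\rho(\xi')$ is continuous in $\pi$ and lower semicontinuous in $\xi'$ throughout $\Pi_\epsilon \times \Xi$; this Carath\'eodory-type regularity makes $V^\pi_\rho(\xi')$ jointly Borel measurable in $(\pi,\xi')$. Lemma~\ref{lem:pi:quasi:continuous} then guarantees that the selector $\pistar:\Xi\to\Pi_\epsilon$ can be (and, by the definition surrounding~\eqref{def:pistar}, is) Borel measurable. Finally, Corollary~\ref{cor:joint-cont:i-proj} together with the linearity of $V(\cdot)$ shows that $(\pi,\zeta) \mapsto V(f_\pi(\zeta))$ is continuous on $\Pi_\epsilon \times \Xi_0$. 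Composing the Borel measurable map $\xi' \mapsto (\pistar(\xi'),\xi')$ with $V^\pi_\rho(\xi')$, and the Borel measurable map $\xi' \mapsto \pistar(\xi')$ with the continuous map $\pi \mapsto V(f_\pi(\xi_0))$ (for the fixed $\xi_0\in\Xi_0$), yields Borel measurability of both sides of the defining inequality of $\mc D$, so $\mc D$ itself is Borel.

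Putting the pieces together, Corollary~\ref{ldp:finite:q} applied to the Borel set $\mc D$ at the model $\xi_0\in\Xi_0$ gives
\[
\frac{1}{T}\log \mb P_{\xi_0}\!\left(V(\fpistar(\xi_0)) < V^{\pistar(\qhat)}_\rho(\qhat)\right) \le \frac{1}{T}\big(\log T + \bar c + (SA)^2\log(T+1)\big) - \rho
\]
for all $T \in \mb N$, and taking $\limsup_{T\to\infty}$ yields the claim. I expect the measurability bookkeeping to be the only nontrivial step; the rest is a direct transcription of the argument behind Theorem~\ref{thm:finite:sample}, with the fixed evaluation policy $\pi$ replaced by the data-driven selector $\pistar(\qhat)$ and the inner feasibility argument applied to the random policy.
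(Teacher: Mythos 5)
Your proof is correct and rests on the same two pillars as the paper's argument: the finite-sample large deviations upper bound of Corollary~\ref{ldp:finite:q}, and the contraposition step showing that any disappointing estimator realization must satisfy $\Lc{\xi'}{\xi_0}>\rho$ (because $\xi_0$ is feasible in~\eqref{def:vstar:q} for the policy $\pistar(\xi')$ whenever $\Lc{\xi'}{\xi_0}\le\rho$). Where you diverge is the measurability step. The paper (in the proof of Theorem~\ref{thm:pi:finite:sample}, of which Theorem~\ref{thm:gen:bound:pi} is a direct consequence) never shows that your data-dependent set $\mc D$ is Borel; instead it encloses it in the union $\bigcup_{\pi\in\Pi_\epsilon}\mc D(\pi)$ of the per-policy disappointment sets $\mc D(\pi)=\{\xi'\in\Xi: V(f_\pi(\xi_0))<V^\pi_\rho(\xi')\}$, each of which is open by Lemma~\ref{lem:lsc}, so that the union is open and hence Borel; the rate bound $\inf_{\xi'\in\cup_\pi\mc D(\pi)}\Lc{\xi'}{\xi_0}\ge\rho$ holds uniformly in $\pi$ and yields the same $-\rho$. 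You instead prove Borel measurability of $\mc D$ itself, by combining the joint measurability of the Carath\'eodory function $(\pi,\xi')\mapsto V^\pi_\rho(\xi')$ from Corollary~\ref{cor:admissible:dr-policy}, the Borel measurability of the selector $\pistar$ from Lemma~\ref{lem:pi:quasi:continuous}, and the continuity of $\pi\mapsto V(f_\pi(\xi_0))$ from Corollary~\ref{cor:joint-cont:i-proj}. Both routes are valid: the paper's union trick is more elementary and does not even require measurability of $\pistar$ to run the bound, while your argument is more direct and has the side benefit of making explicit that the event appearing in the theorem statement is itself measurable, a point the paper leaves implicit by passing to the open enlargement.
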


Theorem~\ref{thm:gen:bound:pi} is a direct consequence of Theorem~\ref{thm:pi:finite:sample} below. Thus, its proof is omitted.

\begin{theorem}[Finite-sample guarantee for $\pistar(\qhat)$]\label{thm:pi:finite:sample}
For every $\xi_0\in\Xi_0$, there is $\bar c>0$ such that the distributionally robust estimator $\pistar(\qhat)$ satisfies the following for all $\rho>0$, $\epsilon>0$ and~$T\in\mathbb N$.
\begin{align*}
    \frac{1}{T} \log\mathbb{P}_{\xi_0}\left( V(\fpistar(\xi_0)) <  V^{\pistar(\qhat)}_\rho(\qhat)\right) \leq \frac{1}{T}(\log (T)+\bar c+ S^2A\log (T+1) )-\rho
\end{align*}
\end{theorem}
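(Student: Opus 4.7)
The plan is to mirror the disappointment-set argument from the proof of Theorem~\ref{thm:finite:sample}, but now applied to the policy-valued estimator $\pistar(\qhat)$. Concretely, I would introduce the disappointment set
\[
    \mc D = \bigl\{\xi' \in \Xi : V(f_{\pistar(\xi')}(\xi_0)) < V^{\pistar(\xi')}_\rho(\xi')\bigr\},
\]
so that the event of interest coincides with $\{\qhat \in \mc D\}$, and then bound $\mb P_{\xi_0}(\qhat \in \mc D)$ using a finite-sample large deviations estimate akin to Corollary~\ref{ldp:finite:q}.

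The key step is to show that $\inf_{\xi' \in \mc D}\Lc{\xi'}{\xi_0} \geq \rho$. Fix any $\xi' \in \Xi$ with $\Lc{\xi'}{\xi_0} \leq \rho$. Since $\pistar(\xi') \in \Pi_\epsilon \subseteq \Pi_0$ and $\xi_0 \in \Xi_0$, the point $\xi_0$ is feasible for the optimization problem~\eqref{def:vstar:q} defining $V^{\pistar(\xi')}_\rho(\xi')$, whence
\[
    V^{\pistar(\xi')}_\rho(\xi') = \inf_{\zeta \in \Xi_0,\, \Lc{\xi'}{\zeta}\leq \rho} V(f_{\pistar(\xi')}(\zeta)) \leq V(f_{\pistar(\xi')}(\xi_0)),
\]
so $\xi' \notin \mc D$. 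Contraposition then yields the desired lower bound.

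Before invoking the finite-sample LDP I would verify that $\mc D$ is a Borel set. By Corollary~\ref{cor:joint-cont:i-proj} the map $(\pi, \xi') \mapsto V(f_\pi(\xi_0))$ is jointly continuous, and by Corollary~\ref{cor:admissible:dr-policy} the map $(\pi, \xi') \mapsto V^\pi_\rho(\xi')$ is continuous in $\pi$ and lower semicontinuous (hence Borel measurable) in $\xi'$. Since $\pistar:\Xi \to \Pi_\epsilon$ is a Borel measurable selector from the argmax correspondence by Lemma~\ref{lem:pi:quasi:continuous} applied to $V^\pi_\rho$, the compositions $\xi' \mapsto V(f_{\pistar(\xi')}(\xi_0))$ and $\xi' \mapsto V^{\pistar(\xi')}_\rho(\xi')$ are Borel measurable, and thus $\mc D$ is Borel.

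Combining the two observations yields
\[
    \frac{1}{T}\log \mb P_{\xi_0}(\qhat \in \mc D) \leq \frac{1}{T}\bigl(\log T + \bar c + S^2 A \log(T+1)\bigr) - \inf_{\xi' \in \mc D}\Lc{\xi'}{\xi_0} \leq \frac{1}{T}\bigl(\log T + \bar c + S^2 A \log(T+1)\bigr) - \rho,
\]
which is the claimed bound. To arrive at the exponent $S^2 A$ rather than the $(SA)^2$ delivered by Corollary~\ref{ldp:finite:q} verbatim, I would refine the counting step in the proof of Theorem~\ref{thm:LDP} so as to enumerate the realizations of $\qhat$ directly in the $S^2 A$-dimensional ambient space $\frac{1}{T}\{0,\ldots,T\}^{\mc S\times\mc A\times\mc S}\cap\Xi$ rather than the realizations of $\thetahat$ in $\Theta_T$. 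The main obstacles I expect are the Borel measurability of $\mc D$ (which hinges on jointly handling the continuity in $\pi$, lower semicontinuity in $\xi'$, and measurability of the selector $\pistar$) and this sharper large deviations counting step.
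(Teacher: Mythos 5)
Your proposal is correct and follows essentially the same route as the paper: the heart of both arguments is the contraposition that $\Lc{\xi'}{\xi_0}\le\rho$ makes $\xi_0$ feasible in~\eqref{def:vstar:q} for $\pi=\pistar(\xi')$, forcing $V^{\pistar(\xi')}_\rho(\xi')\le V(f_{\pistar(\xi')}(\xi_0))$, followed by an application of Corollary~\ref{ldp:finite:q}. The one structural difference is that the paper replaces your exact disappointment set $\mc D$ by the larger union $\cup_{\pi\in\Pi_\epsilon}\mc D(\pi)$ with $\mc D(\pi)=\{\xi'\in\Xi: V(f_\pi(\xi_0))<V^\pi_\rho(\xi')\}$; since each $\mc D(\pi)$ is open by Lemma~\ref{lem:lsc}, the union is open and the Borel-measurability discussion you carry out for the composed maps $\xi'\mapsto V(f_{\pistar(\xi')}(\xi_0))$ and $\xi'\mapsto V^{\pistar(\xi')}_\rho(\xi')$ is avoided entirely, while the same contraposition still bounds $\inf_{\xi'\in\cup_\pi\mc D(\pi)}\Lc{\xi'}{\xi_0}$ from below by $\rho$. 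Your measurability argument via the measurable selector is sound, so both variants work. Finally, you are right to flag the exponent: Corollary~\ref{ldp:finite:q} delivers $(SA)^2$, and the paper's own proof also only produces $(SA)^2$ in its final display despite the $S^2A$ in the theorem statement; obtaining $S^2A$ genuinely requires the sharper type-counting over realizations of $\qhat$ in $\Delta(\mc S\times\mc A\times\mc S)$ that you sketch (which is feasible because the probability of a sample path is determined by its state-action-next-state type), so on this point your proposal is, if anything, more faithful to the stated bound than the paper's argument.
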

\begin{proof}
Define the disappointment set $\mc D(\pi)=\{\xi'\in\Xi: V(\fpi(\xi_0)) <  V^\pi_\rho(\xi')\}$ corresponding to a fixed policy $\pi\in\Pi_\epsilon$, which is open because $V^\pi_\rho(\xi')$ is lower semicontinuous by virtue of Lemma~\ref{lem:lsc}. Next, observe that, for every fixed estimator realization $\xi'\in\Xi$, the following implications hold.
\begin{align*}
    V(f_{\pistar(\xi')}(\xi_0)) <  V^{\pistar(\xi')}_\rho(\xi')&\implies
    \exists \pi\in\Pi_\epsilon \text{ with } V(\fpi(\xi_0))<V^\pi_\rho(\xi')\\
    &\implies \xi'\in\cup_{\pi\in\Pi_\epsilon} \mc D(\pi) 
\end{align*}
This in turn implies that
\begin{align*}
 \frac{1}{T}\log  \mathbb{P}_{ \xi_0}& \left( V(\fpistar(\xi_0)) <  V^{\pistar(\qhat)}_\rho(\qhat)\right) \le  \frac{1}{T}\log \mb P_{ \xi_0}\left(\qhat\in\cup_{\pi\in\Pi_\epsilon} \mc D(\pi)\right)\\
 &\leq \frac{1}{T}(\log T+\bar c+ d^2\log (T+1) )-\inf_{\xi'\in \cup_{\pi\in\Pi_\epsilon}\mc D(\pi)}{\Lc{\xi'}{\xi}} \\
 &\leq \frac{1}{T}((SA)^2\log (T+1)+\log (T)+\bar c  )-\rho \quad   \forall T\in\mb N,
\end{align*}
where the second inequality follows from Corollary~\ref{ldp:finite:q}. The third equality can be justified as follows. The definition of~$V^{\pi}_\rho$ in~\eqref{def:vstar:q} readily implies that $V(\fpi(\xi_0))\ge V^\pi_\rho(\xi')$ whenever $\Lc{\xi'}{ \xi_0}\le\rho.$ By contraposition, this means that $\Lc{\xi'}{ \xi_0}>\rho$ whenever $V(\fpi(\xi_0))< V^\pi_\rho(\xi')$ for some $\pi\in\Pi$. Hence, the infimum in the second line of the above expression is bounded below by~$\rho$. 
\end{proof}

We are now ready to show that the distributionally robust policy estimator $\pi_\rho(\qhat)$ represents the least conservative admissible policy estimator in a sense made precise in the following theorem.

\begin{theorem}[Statistical efficiency of $\pi_\rho(\qhat)$]\label{thm:pareto:V:pistar}
    For every fixed $\rho>0$ and $\epsilon>0$, and for any admissible policy function $\widehat\pi$ and the corresponding family of value functions $\Vhat^{\pi}$, $\pi\in\Pi_\epsilon$, we have
    \begin{align}\label{ineq:pareto:pi}
        V^{\pistar(\xi')}_\rho(\xi') \geq \Vhat^{\pihat(\xi')}(\xi')\quad \forall \xi'\in\Xi.
    \end{align}
\end{theorem}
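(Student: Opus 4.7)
The plan is to follow the architecture of the proof of Theorem~\ref{thm:pareto:vstar}, establishing the inequality first on $\Xi_0$ by contradiction via the large deviations principle and then extending it to $\Xi \setminus \Xi_0$ through an approximation argument. For the first step, I would suppose for contradiction that there exists $\xi'_0 \in \Xi_0$ with $\widehat V^{\pihat(\xi'_0)}(\xi'_0) > V^{\pistar(\xi'_0)}_\rho(\xi'_0)$, set $\widehat\pi_0 := \pihat(\xi'_0)$, and use the definition of $\pistar$ in~\eqref{def:pistar} to obtain the chain $V^{\widehat\pi_0}_\rho(\xi'_0) \leq V^{\pistar(\xi'_0)}_\rho(\xi'_0) < \widehat V^{\widehat\pi_0}(\xi'_0)$. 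With the strict gap $\epsilon_0 := \widehat V^{\widehat\pi_0}(\xi'_0) - V^{\widehat\pi_0}_\rho(\xi'_0) > 0$, this is exactly the situation treated at the outset of the proof of Theorem~\ref{thm:pareto:vstar} for the fixed policy $\pi = \widehat\pi_0$.

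Following that proof, I would invoke the attainment of the infimum defining $V^{\widehat\pi_0}_\rho(\xi'_0)$ by some $\xi_1^\star \in \Xi_0$, the radial monotonicity of $\mathsf{D_{mdp}}$ from Lemma~\ref{lem:properties:Dm}\,\ref{lem:radial:monotone:Dm}, and the continuity of $V \circ f_{\widehat\pi_0}$ on $\Xi_0$ from Lemma~\ref{lem:cont:sol:i-proj} to produce a model $\xi_0^\star \in \Xi_0$ with $\rho_0 := \Lc{\xi'_0}{\xi_0^\star} < \rho$ strictly and $V(f_{\widehat\pi_0}(\xi_0^\star)) < \widehat V^{\widehat\pi_0}(\xi'_0) - \epsilon_0/2$. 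The set $\mc D := \{\xi' \in \Xi : \widehat V^{\widehat\pi_0}(\xi') > V(f_{\widehat\pi_0}(\xi_0^\star))\}$ is then open in $\Xi$ by the lower semicontinuity of $\widehat V^{\widehat\pi_0}$ in $\xi'$ (condition~\ref{eq:item:admissible:conti} of Definition~\ref{def:pi:admissibility}) and contains $\xi'_0$ by construction.

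The main obstacle is to argue that on $\mc D \cap \Xi_0$ the admissibility disappointment event under model $\xi_0^\star$, namely $\{V(f_{\pihat(\xi')}(\xi_0^\star)) < \widehat V^{\pihat(\xi')}(\xi')\}$, is triggered. Condition~\ref{eq:item:admissible:greedy} only yields $\widehat V^{\pihat(\xi')}(\xi') = \max_\pi \widehat V^\pi(\xi') \geq \widehat V^{\widehat\pi_0}(\xi') > V(f_{\widehat\pi_0}(\xi_0^\star))$, whereas the admissibility event compares $\widehat V^{\pihat(\xi')}(\xi')$ against $V(f_{\pihat(\xi')}(\xi_0^\star))$, which involves the Borel-measurable---hence potentially discontinuous---selector $\pihat$. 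I would address this by strengthening the construction of $\xi_0^\star$: using the joint continuity of $V(f_\pi(\xi_0))$ in $(\pi,\xi_0)$ from Corollary~\ref{cor:joint-cont:i-proj} and the compactness of $\arg\max_\pi \widehat V^\pi(\xi'_0)$ (which holds because $\widehat V^\pi(\xi'_0)$ is continuous in $\pi$ on the compact set $\Pi_\epsilon$), I would pick $\xi_0^\star$ so that $V(f_\pi(\xi_0^\star))$ stays uniformly strictly below $\widehat V^{\widehat\pi_0}(\xi'_0)$ on a neighborhood $N$ of the argmax set in $\Pi_\epsilon$, and then shrink $\mc D$ to a sufficiently small open ball around $\xi'_0$ where the lower semicontinuity of $\widehat V^\pi$ in $\xi'$ keeps the argmax correspondence---and therefore $\pihat(\xi')$---confined to $N$. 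Theorem~\ref{thm:LDP:q} then produces $\liminf_{T\to\infty} \frac{1}{T} \log \mb P_{\xi_0^\star}(\qhat \in \mc D) \geq -\rho_0 > -\rho$, which contradicts~\eqref{eq:condition:pi:admissibility}.

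The final step extends the inequality from $\Xi_0$ to $\xi' \in \Xi \setminus \Xi_0$ verbatim as in the second part of the proof of Theorem~\ref{thm:pareto:vstar}: approximate $\xi'$ by $1/n$-optimal solutions $\xi'_n \in \Xi_0$ of $\inf_{\zeta' \in \Xi_0}\{\widehat V^{\pihat(\zeta')}(\zeta') : \|\zeta' - \xi'\|_2 \leq 1/n\}$, apply the first-part inequality at each $\xi'_n$, and construct the convex-combination witness $\zeta'_{\epsilon, n} := (1 - 1/(2n))\xi' + (1/(2n))\zeta_\epsilon$ from an $\epsilon$-optimal solution $\zeta_\epsilon$ of the problem defining $V^{\pistar(\xi')}_\rho(\xi')$ in~\eqref{def:vstar:q}, using the convexity of $\mathsf{D_{mdp}}$ in its first argument (Lemma~\ref{lem:properties:Dm}\,\ref{Dm:convexity}) to verify feasibility.
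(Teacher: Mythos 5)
Your proposal diverges substantially from the paper's argument, which is a two-line reduction: for any fixed $\xi'\in\Xi$, the paper first applies Theorem~\ref{thm:pareto:vstar} to the \emph{fixed} policy $\pi=\pihat(\xi')$ to get $\Vhat^{\pihat(\xi')}(\xi')\le V_\rho^{\pihat(\xi')}(\xi')$, and then uses the defining property~\eqref{def:pistar} of $\pistar(\xi')$ as a maximizer of $V_\rho^\pi(\xi')$ over $\pi\in\Pi_\epsilon$ to get $V_\rho^{\pihat(\xi')}(\xi')\le V_\rho^{\pistar(\xi')}(\xi')$. No new large deviations argument, no new approximation on $\Xi\backslash\Xi_0$, and no handling of the selector $\pihat$ along a sequence of estimator realizations is needed: the chain $\Vhat^{\pihat(\xi')}(\xi')\le V_\rho^{\pihat(\xi')}(\xi')\le V_\rho^{\pistar(\xi')}(\xi')$ does the whole job. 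You instead re-run the entire contradiction machinery of Theorem~\ref{thm:pareto:vstar} inside the policy-optimization setting, which is why you hit the ``main obstacle'' of relating the disappointment event $\{V(f_{\pihat(\qhat)}(\xi_0^\star))<\Vhat^{\pihat(\qhat)}(\qhat)\}$ to the set $\mc D$ you construct for the single policy $\widehat\pi_0$. (Your underlying worry---that condition~\ref{eq:item:admissible:ucb} of Definition~\ref{def:pi:admissibility} constrains the diagonal estimator $\Vhat^{\pihat(\qhat)}(\qhat)$ rather than the fixed-policy estimator $\Vhat^{\pihat(\xi')}(\qhat)$ required by~\eqref{eq:condition:admissibility}---is a legitimate observation about the hypotheses, but the paper's proof simply invokes Theorem~\ref{thm:pareto:vstar} for the fixed policy rather than confronting it.)

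The concrete gap is in your proposed repair of that obstacle. You want to choose $\xi_0^\star$ so that $V(f_\pi(\xi_0^\star))$ is uniformly below the threshold for $\pi$ in a neighborhood $N$ of $\arg\max_{\pi\in\Pi_\epsilon}\Vhat^\pi(\xi_0')$, and then shrink $\mc D$ to a ball on which the argmax correspondence stays inside $N$, arguing from the lower semicontinuity of $\Vhat^\pi(\cdot)$. But lower semicontinuity in $\xi'$ gives exactly the wrong direction: it prevents $\Vhat^\pi(\xi')$ from dropping as $\xi'\to\xi_0'$, but places no upper bound on $\sup_{\pi\in\Pi_\epsilon\backslash N}\Vhat^\pi(\xi')$ for $\xi'$ near $\xi_0'$, which is what confinement of the argmax requires (upper hemicontinuity of the argmax correspondence needs upper semicontinuity of the objective in the parameter, as in Berge's theorem). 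A value function that jumps up off $\xi_0'$ for policies outside $N$ is perfectly consistent with condition~\ref{eq:item:admissible:conti} and can move $\pihat(\xi')$ arbitrarily far from $N$ for $\xi'$ arbitrarily close to $\xi_0'$. So the step ``$\pihat(\xi')$ is confined to $N$ on a small ball'' fails, and with it the claim that $\qhat\in\mc D$ triggers the admissibility disappointment event under $\mb P_{\xi_0^\star}$. The intended argument is the reduction above; once you have Theorem~\ref{thm:pareto:vstar} for the fixed policy $\pihat(\xi')$, nothing about the behavior of the selector away from $\xi'$ matters.
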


\begin{proof}
Select any~$\xi'\in\Xi$. 
We have
\begin{equation*}
\begin{aligned}
   \widehat V^{\pihat(\xi')}(\xi') \le  V_\rho^{\pihat(\xi')}(\xi') \le V_\rho^{\pi_\rho(\xi')}(\xi'),
\end{aligned}
\end{equation*}
where the first inequality follows from Theorem~\ref{thm:pareto:vstar} for $\pi= \pihat(\xi')$, and the second inequality exploits the definition of~$\pi_\rho$ in~\eqref{def:pistar}. Thus, the claim follows.
\end{proof} 

By condition~\ref{eq:item:admissible:greedy} of Definition~\ref{def:pi:admissibility}, at $\xi'=\qhat$ the right hand side of~\eqref{ineq:pareto:pi} equals $\max_{\pi\in\Pi_\epsilon} \widehat V^\pi(\qhat)$ and can thus be viewed as a generic estimator for the optimal value of the offline policy optimization problem. Condition~\ref{eq:item:admissible:ucb} of Definition~\ref{def:pi:admissibility} ensures that this estimator is conservative, that is, it overestimates the achievable expected reward of the policy $\pihat(\qhat)$ only with a small probability of at most $e^{-\rho T+o(T)}$. Thus, it provides a lower confidence bound on the optimal expected reward. Similarly, by~\eqref{def:pistar}, at $\xi'=\qhat$ the left hand side of~\eqref{ineq:pareto:pi} equals $\max_{\pi\in\Pi_\epsilon} V_\rho^\pi(\qhat)$, and Theorem~\ref{thm:gen:bound:pi} ensures that it provides a lower confidence bound with the same significance level. Theorem~\ref{thm:pareto:V:pistar} thus asserts that the distributionally robust policy estimator~$\pi_\rho(\qhat)$ provides the least conservative lower confidence bound among all policy estimators with the same significance level $e^{-\rho T+o(T)}$.
\section{Numerical Solution Schemes}
\label{sec:computational:aspects}

The distributionally robust value and policy estimators introduced in Sections~\ref{ssec:OPE:dist:shift} and~\ref{ssec:OPL:dist:shift} are statistically efficient. It remains to be discussed how these estimators can be computed efficiently. 
To this end, we will adapt the actor-critic algorithm by \citet[\S~4]{li2023policy}, which was originally developed for robust MDPs with discounted cost criteria, to robust MDPs with average reward criteria.

\subsection{Reparametrization}\label{ssec:reparametrization}
As a preparation, we first show that the robust policy evaluation problem in~\eqref{def:vstar:q} can be reformulated as an optimization problem over transition kernels with a non-convex objective function and a convex feasible set. Thus, for any state-action-next-state distributions $\xi', \xi \in \Xi_0$ we define the corresponding transition kernels $Q', Q\in\mc Q_0$ and policies $\pi',\pi\in\Pi_0$ through~\eqref{eq:q-from-xi} and~\eqref{eq:pi-from-xi}, respectively. In addition, we let $\mu',\mu\in\Delta(\mc S\times\mc A)$ be the corresponding stationary state-action distributions. The conditional relative entropy~\eqref{def:Dm:Theta} between $\xi$ and $\xi'$ can then be equivalently expressed as
\begin{equation}\label{Dm:alt:expression}
\begin{split}
    \Lc{\xi'}{\xi} &= \sum_{s\in\mc S } \sum_{a\in\mc A}\mu'(s,a)\left(\log \frac{\mu'(s,a)}{\sum_{\tilde a\in\mc A}\mu'(s,\tilde a)}-\log \frac{\mu(s,a)}{\sum_{\tilde a\in\mc A}\mu(s,\tilde a)}\right) \\
    &\qquad +\sum_{s,s'\in\mc S} \sum_{a,a'\in\mc A}\xi'(s,a,s')\left(\log \frac{\xi'(s,a,s')}{\mu'(s,a)}-\log \frac{\xi(s,a,s')}{\mu(s,a)}\right) \\
    &=\sum_{s\in\mc S} \sum_{a\in\mc A}\mu'(s,a) \mathsf{D}(\pi'(\cdot |s)\|\pi(\cdot |s))+\sum_{s\in\mc S} \sum_{a\in\mc A}\mu'(s,a)\mathsf{D}(Q'(\cdot|s,a)\|Q(\cdot|s,a)),
\end{split}
\end{equation}
where the second equality follows from~\eqref{eq:q-from-xi} and~\eqref{eq:pi-from-xi}. We can use~\eqref{Dm:alt:expression} to recast the distributionally robust value function~\eqref{def:vstar:q} as the optimal value of a parametric minimization problem over~$Q$. 

\begin{lemma}[Reformulation of $V^\pi_\rho(\xi')$]\label{lemma:vstar:reformulation:Q}
For any fixed $\pi\in\Pi_0$, $\rho\geq 0$ and $\xi'\in\Xi_0$, we have
\begin{align}\label{vstar:Q:formulation}
    V^{\pi}_\rho(\xi')=\min_{Q\in \mc Q_0}\left\{\sum_{x\in\mc X}r(x)\mu_{\pie,Q}(x) : \sum_{x\in\mc X}\mu'(x)\mathsf{D}(Q'(\cdot|x)\|Q(\cdot|x))\le \rho\right\},
\end{align}
where $Q'$ is the transition kernel induced by~$\xi'$ via~\eqref{eq:q-from-xi} and $\mu_{\pie,Q}$ is the unique positive solution of the equations $\sum_{x\in\mc X} \mu_{\pi,Q}(x)=1$ and $\mu_{\pi,Q}(x)=\sum_{y\in\mc X}\mu_{\pi,Q}(y) P(y,x)$ with $P$ defined as in~\eqref{expr:P:from:pi:Q}.
\end{lemma}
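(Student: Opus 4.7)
The plan is to use the one-to-one correspondence between $\xi_0\in\Xi_0$ and pairs $(\pi_0,Q)\in\Pi_0\times\mc Q_0$ established in Section~\ref{ssec:problem:statement} in order to reparametrize the infimum in~\eqref{def:vstar:q}. First I would note that, by Definition~\ref{def:f}, $f_\pi(\xi_0)$ is the stationary state-action-next-state distribution induced by the evaluation policy $\pi$ together with the transition kernel $Q$ extracted from $\xi_0$ via~\eqref{eq:q-from-xi}; the linear representation of $V$ derived immediately before Lemma~\ref{lem:cont:sol:i-proj} then yields $V(f_\pi(\xi_0))=\sum_{x\in\mc X}r(x)\mu_{\pi,Q}(x)$, so the objective depends on $\xi_0$ only through $Q$ and not on the auxiliary factor $\pi_0$.

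Next I would invoke the alternative expression~\eqref{Dm:alt:expression} for the conditional relative entropy, which decomposes $\Lc{\xi'}{\xi_0}$ as the sum of a non-negative policy-discrepancy term $\sum_{s,a}\mu'(s,a)\mathsf{D}(\pi'(\cdot|s)\|\pi_0(\cdot|s))$, which vanishes at $\pi_0=\pi'$, and the transition-kernel term $\sum_{s,a}\mu'(s,a)\mathsf{D}(Q'(\cdot|s,a)\|Q(\cdot|s,a))$ that appears in~\eqref{vstar:Q:formulation}. Because $\xi'\in\Xi_0$, the one-to-one correspondence ensures that the associated factors $\pi'\in\Pi_0$ and $Q'\in\mc Q_0$ are well-defined.

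The identity then follows from a sandwich argument. For the $\le$ direction, given any $Q\in\mc Q_0$ feasible in~\eqref{vstar:Q:formulation}, Lemma~\ref{lem:irreducibleity:pi:Q:X_t} guarantees that the pair $(\pi',Q)$ induces an irreducible Markov chain and therefore a unique $\xi_0\in\Xi_0$; the policy-discrepancy term vanishes for this choice, so $\Lc{\xi'}{\xi_0}\le\rho$ and $\xi_0$ is a feasible candidate in~\eqref{def:vstar:q} whose value equals $\sum_{x}r(x)\mu_{\pi,Q}(x)$. For the $\ge$ direction, the discussion following~\eqref{eq:Gamma-Xi-Xi0} ensures that the infimum in~\eqref{def:vstar:q} is attained at some $\xi_0^\star\in\Xi_0$; decomposing $\xi_0^\star$ into its factors $(\pi_0^\star,Q^\star)$ and dropping the non-negative policy-discrepancy term shows that $Q^\star$ satisfies the constraint in~\eqref{vstar:Q:formulation} and attains the same objective.

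The main subtlety I anticipate is that $\mc Q_0$ is an open subset of $\mc Q$, so a priori the right-hand side of~\eqref{vstar:Q:formulation} might be only an infimum rather than a minimum. The sandwich argument above circumvents this issue by transferring attainment directly from problem~\eqref{def:vstar:q}, thereby avoiding any direct compactness analysis of the $Q$-feasible set.
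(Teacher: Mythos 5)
Your proposal is correct and follows essentially the same route as the paper's proof: both rely on the decomposition~\eqref{Dm:alt:expression} of $\mathsf{D_{mdp}}$ into a non-negative policy-discrepancy term and the transition-kernel term, map feasible points in each direction via the one-to-one correspondence between $\xi_0$ and $(\pi_0,Q)$ (choosing $\pi_0=\pi'$ so the policy term vanishes in one direction and dropping it in the other), and transfer attainment of the minimum from problem~\eqref{def:vstar:q}. No gaps.
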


The stationary distribution $\mu_{\pi,Q}$ is well-defined thanks to the Perron-Frobenius theorem. Note that the minimization problem~\eqref{vstar:Q:formulation} is independent of the policy~$\pi'$ corresponding to~$\xi'$. In the remainder of the paper we use $\mc Q_\rho(\xi')$ as a shorthand for the feasible set of~\eqref{vstar:Q:formulation}.

\begin{proof}{\textbf{of Lemma~\ref{lemma:vstar:reformulation:Q}}}
Recall that $V^{\pi}_\rho(\xi')$ is defined as the optimal value of problem~\eqref{def:vstar:q}. In the first part of the proof we show that the optimal value of~\eqref{def:vstar:q} is at least as large as that of~\eqref{vstar:Q:formulation}. To this end, suppose that $\xi_0\in\Xi_0$ is a feasible solution of~\eqref{def:vstar:q}, and define $\xi=f_\pi(\xi_0)$, where $f_\pi$ is as in Definition~\ref{def:f}. Then, the transition kernel $Q\in\mc Q_0$ induced by~$\xi_0$ through~\eqref{eq:q-from-xi} satisfies 
\[
    \sum_{x\in\mc X}r(x)\mu_{\pie,Q}(x) =\sum_{s,s'\in\mc S}\sum_{a\in\mc A}r(s,a)\xi(s,a,s')= V(\fe(\xi_0)),
\] 
where the two equalities follow from~\eqref{xi:alt:pi:Q} and from the definition of~$V$ at the beginning of Section~\ref{ssec:OPE:dist:shift}, respectively. Thus, the objective function value of~$Q$ in~\eqref{vstar:Q:formulation} equals that of~$\xi_0$ in~\eqref{def:vstar:q}. Next, let~$\pi_0\in\Pi_0$ be the policy induced by~$\xi_0$ through~\eqref{eq:pi-from-xi}. By construction, we also have
\begin{align*}
   \sum_{x\in\mc X}\mu'(x)\mathsf{D}(Q'(\cdot|x)\|Q(\cdot|x))
    &= \Lc{\xi'}{\xi_0} - \sum_{s\in\mc S}\sum_{a\in\mc A} \mu'(s,a) \mathsf{D}(\pi'(\cdot |s)\|\pi_0(\cdot |s))\le \rho,
\end{align*} 
where the equality follows from~\eqref{Dm:alt:expression}, while the inequality holds because~$\xi_0$ is feasible in~\eqref{def:vstar:q} such that $\Lc{\xi'}{\xi_0}\le\rho$ and because the relative entropy is non-negative. Hence, $Q$ is feasible in~\eqref{vstar:Q:formulation}. In summary, these insights confirm that the optimal value of~\eqref{def:vstar:q} is at least as large as that of~\eqref{vstar:Q:formulation}.

In the second part of the proof we show that the optimal value of~\eqref{def:vstar:q} is at most as large as that of~\eqref{vstar:Q:formulation}. To this end, suppose that $Q\in\mc Q_0$ is feasible in~\eqref{vstar:Q:formulation}. Defining $\xi_0\in\Xi_0$ as the stationary state-action-next-state distribution corresponding to~$\pi'$ and~$Q$ and setting $\xi=f_\pi(\xi_0)$, we then find 
\[
    V(\fe(\xi_0))=\sum_{s,s'\in\mc S}\sum_{a\in\mc A}r(s,a)\xi(s,a,s')=\sum_{x\in\mc X}r(x)\mu_{\pi,Q}(x),
\] 
where the equalities follow again from the definition of~$V$ and from~\eqref{xi:alt:pi:Q}, respectively. Thus, the objective function value of~$\xi_0$ in~\eqref{def:vstar:q} equals that of~$Q$ in~\eqref{vstar:Q:formulation}.
By construction of~$\xi_0$, we also have
\begin{align*}
 \Lc{\xi'}{\xi_0}&=\sum_{s\in\mc S}\sum_{a\in\mc A}\mu'(s,a) \mathsf{D}(\pi'(\cdot |s)\|\pi'(\cdot |s))+\sum_{x\in\mc X}\mu'(x) \mathsf{D}(Q'(\cdot|x)\|Q(\cdot|x))
 \\&=\sum_{x\in\mc X}\mu'(x)\mathsf{D}(Q'(\cdot|x)\|Q(\cdot|x))\le\rho,
\end{align*}
where the two equalities follow from~\eqref{Dm:alt:expression} and from the trivial relation $\mathsf{D}(\pi'(\cdot |s)\|\pi'(\cdot |s))=0$, respectively, whereas the inequality holds because~$Q$ is feasible in~\eqref{vstar:Q:formulation}. Hence, $\xi$ is feasible in~\eqref{def:vstar:q}.
In summary, these insights confirm that the optimal value of~\eqref{def:vstar:q} is at most as large as that of~\eqref{vstar:Q:formulation}. 

The above reasoning implies that the optimal values of~\eqref{def:vstar:q} and~\eqref{vstar:Q:formulation} match. Recalling that the minimum of~\eqref{def:vstar:q} is attained because~$\xi'\in\Xi_0$, the first part of the proof thus implies that the minimum of problem~\eqref{vstar:Q:formulation} is attained, too. This observation completes the proof.
\end{proof}

Note that the feasible region of problem~\eqref{vstar:Q:formulation} is convex because~$\mc Q_0$ is a convex set and the relative entropy is a convex function. However, the objective function of~\eqref{vstar:Q:formulation} is generically non-convex because the stationary distribution $\mu_{\pi,Q}$ fails to be convex in~$Q$ \citep[Remark 13]{li2021distributionally}. Note also that problem~\eqref{vstar:Q:formulation} evaluates the worst-case average reward of the policy~$\pi$ across all transition kernels~$Q$ in the non-rectangular uncertainty set~$\mc Q_\rho(\xi')$ {\em without} a distribution shift.

\begin{example}[Non-rectangularity of $\mc Q_\rho(\xi')$]\label{rmk:non-rectangularity}
Assume that $\mc S = \{s_1, s_2\}$ and $\mc A=\{a\}$, and fix an arbitrary \(\xi' \in \Xi_0\). In this case, the uncertainty set~$\mc Q_\rho(\xi')$ simplifies to
\begin{equation*}
    \mc Q_\rho(\xi') = \left\{Q\in \mc Q_0 :  \mu'_{\mc S}(s_1)\mathsf{D}(Q'(\cdot|s_1,a)\|Q(\cdot|s_1,a))+\mu'_{\mc S}(s_2)\mathsf{D}(Q'(\cdot|s_2,a)\|Q(\cdot|s_2,a))\le \rho \right\}.
\end{equation*}
Thus, any transition kernel \(Q \in \mathcal{Q}_\rho(\xi')\) must respect the following inequality:
\[\mu'_{\mc S}(s_1)\mathsf{D}(Q'(\cdot|s_1,a)\|Q(\cdot|s_1,a)) \le \rho - \mu'_{\mc S}(s_2)\mathsf{D}(Q'(\cdot|s_2,a)\|Q(\cdot|s_2,a)).\]
This inequality demonstrates that the permissible next-state distribution of $Q(\cdot|s_1,a)$ for state \(s_1\) and action $a$ is contingent upon $Q(\cdot|s_2,a)$ for state \(s_2\) and action $a$. Such interdependence directly contradicts the definition of \(s\)-rectangularity \citep[\S~2.1]{wiesemann2013robust}, which requires that the ambiguity set decompose into independent, state-specific components.
\end{example}

Robust policy evaluation problems (and thus robust MDPs) with non-rectangular uncertainty sets are generically intractable. Theorem~1 in \citep{wiesemann2013robust} shows that when the uncertainty set of the transition kernel \(Q\) is a convex polytope that does not satisfy any rectangularity conditions---specifically, it is neither \((s,a)\)-rectangular, \(s\)-rectangular, nor \(r\)-rectangular---then the robust policy evaluation problem can be reduced to an integer feasibility problem and thus is NP-hard.

\subsection{Actor-Critic Algorithm}
Given an oracle that outputs approximate solutions for the robust policy evaluation problem in~\eqref{vstar:Q:formulation}, the robust policy optimization problem~\eqref{def:pistar} can be addressed with a variant of the actor-critic algorithm developed by \citet{li2023policy} for robust MDPs with a {\em discounted} cost objective; see Algorithm~\ref{alg:PG-min-oracle}. Throughout this section we use $V^\pi_Q=\sum_{x\in\mc X} r(x)\mu_{\pi,Q}(x) $ as shorthand for the long-run average reward of the policy~$\pi\in\Pi_\epsilon$ under the transition kernel~$Q\in\mc Q_0$. 

\begin{algorithm}[h!] 
  \caption{Actor-critic algorithm for solving the robust policy optimization problem~\eqref{def:pistar}}
  \label{alg:PG-min-oracle}
\begin{algorithmic}[1]
\REQUIRE Iteration number $K$, step size $\eta>0$, tolerance $ \delta>0$
\STATE Initialize  $\pi^{(0)}\in \Pi_\epsilon$, $k=0$
\WHILE{$k\le K$}
  \STATE \textit{Critic}: Find $\Pk\in\mc Q_\rho(\xi')$ such that
  $\Value{\Pk}{\pik}\le \Value{\rho}{\pik}(\xi')+\delta$
  \label{alg:PG-min-oracle:line:PE}
  \STATE \textit{Actor}: $\pikp=\mathrm{Proj}_{\Pi_\epsilon} \left(\pik+\eta\nabla_{\pi} \Value{\Pk}{\pik}\right)$
  \label{alg:PG-min-oracle:line:MD}
  \STATE $k \rightarrow k+1$
\ENDWHILE
\end{algorithmic}
\end{algorithm}

In each iteration~$k$, Algorithm~\ref{alg:PG-min-oracle} first computes a $\delta$-optimal solution $\Pk$ of the robust policy evaluation problem~\eqref{vstar:Q:formulation} associated with the current policy~$\pik$ (\textit{critic}) and then applies a projected gradient step to find a new policy~$\pikp$ that locally improves the value function associated with the current transition kernel $\Pk$ (\textit{actor}). The critic's subproblem can be solved with Algorithm~\ref{alg:PLD} described in Section~\ref{ssec:critic} below, for example, which outputs a $\delta$-optimal solution of the robust policy evaluation problem with high probability. The actor's subproblem simply consists in computing a policy gradient and projecting a vector onto the simplex~$\Pi_\epsilon$ as described in Section~\ref{ssec:actor} below.

\subsubsection{Actor}\label{ssec:actor}

Projecting a vector onto $\Pi_\epsilon$ is a standard operation that admits highly efficient implementations; see, {\em e.g.},  \citep{wang2013projection}. Thus, the main computational burden of the actor's subproblem is associated with the computation of the policy gradient~$\nabla_\pi\Value{Q}{\pi}$. In order to derive a concise formula for the policy gradient, it is useful to introduce a differential action-value function.

\begin{definition}[Differential action-value function]
\label{def:diff-action-value-function}
For any fixed policy $\pi\in\Pi_\epsilon$ and transition kernel $Q\in\mc Q_0$, the 
differential action-value function $\bias{\pi}{Q}:\mc S\times \mc A\to\mb R $ is defined through
\begin{align*}
    \bias{\pi}{Q}(s,a)=\lim_{T\to\infty}\frac{1}{T}\sum_{t=1}^T\mathbb{E}_{\mb P_{\pi,Q}}\left[ \left. \sum_{\tau=1}^{t}\left(r(s_\tau, a_\tau)-V^\pi_Q\right) \,\right|\, s_1=s, \, a_1=a\right].
\end{align*}
\end{definition}
We emphasize that the limit in Definition~\ref{def:diff-action-value-function} exists and is finite \citep[Section~8.2.1]{puterman2005markov}. Note also that, strictly speaking, the gradient $\nabla_\pi V^\pi_Q$ fails to exist because the function $V^\pi_Q$ was only defined on~$\Pi_0\times\mc Q_0$ and because the interior of~$\Pi_0$ is empty. Using \citep[Lemma~4]{li2021distributionally}, however, one can show that $\Value{Q}{\pi}$ is given by a rational (and thus analytic) function of the matrix~$P$ defined as in~\eqref{expr:P:from:pi:Q}. As~$P$ is linear in~$\pi$, $\Value{Q}{\pi}$ thus constitutes a rational function of~$\pi$. One can also show that the denominator of this rational function is bounded away from zero on a neighborhood of~$\Pi_\epsilon$ for any~$\epsilon>0$. In the following, we interpret $\nabla_\pi V^\pi_Q$ as the gradient of this analytic extension of~$V^\pi_Q$, which is well-defined on~$\Pi_\epsilon$ because the interior of any neighborhood of~$\Pi_\epsilon$ covers~$\Pi_\epsilon$.


\begin{lemma}[Policy gradient {\citep[Lemma~7]{li2022stochastic}}] \label{lemma:policy:gradient:OPL}
If $Q\in \mc Q_0$ and $\pi\in\Pi_\epsilon$, then 
$$
    \frac{\partial \Value{Q}{\pi}}{\partial\pi(a|s)}=\sum_{\tilde a\in\mc A} \mu_{\pi,Q}(s,\tilde a)\bias{\pi}{Q}(s, a) \quad\forall a\in\mc A, s\in\mc S,
$$
where $\mu_{\pie,Q}$ is defined as in Lemma~\ref{lemma:vstar:reformulation:Q}.
\end{lemma}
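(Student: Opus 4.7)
The plan is to derive the formula by differentiating the Bellman (Poisson) equation for the differential action-value function $\bias{\pi}{Q}$ and then averaging against the state-stationary distribution so that the derivatives of the accompanying differential state-value function cancel by stationarity. Since the lemma is quoted from \cite{li2022stochastic}, I only sketch the main steps.

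First, I would introduce the differential state-value function $h^\pi_Q(s)=\sum_{a\in\mc A}\pi(a|s)\,\bias{\pi}{Q}(s,a)$ and write $m(s)=\sum_{\tilde a\in\mc A}\mu_{\pi,Q}(s,\tilde a)$ for the state-marginal of $\mu_{\pi,Q}$. Definition~\ref{def:diff-action-value-function}, combined with conditioning on the first transition and invoking the Markov property, yields the Bellman equation
\begin{align*}
\bias{\pi}{Q}(s,a)=r(s,a)-\Value{Q}{\pi}+\sum_{s'\in\mc S}Q(s'|s,a)\,h^\pi_Q(s')\quad\forall s\in\mc S,\; a\in\mc A.
\end{align*}
Differentiating the defining identity of $h^\pi_Q$ with respect to a single entry $\pi(a|s)$, substituting the derivative of $\bias{\pi}{Q}$ coming from the Bellman equation, and using $\sum_{a'\in\mc A}\pi(a'|\tilde s)=1$ at the current valid policy, I would obtain the recursion
\begin{align*}
\frac{\partial h^\pi_Q(\tilde s)}{\partial \pi(a|s)}=\indic{\tilde s=s}\,\bias{\pi}{Q}(s,a)-\frac{\partial \Value{Q}{\pi}}{\partial \pi(a|s)}+\sum_{s'\in\mc S}\sum_{a'\in\mc A}\pi(a'|\tilde s)\,Q(s'|\tilde s,a')\,\frac{\partial h^\pi_Q(s')}{\partial \pi(a|s)}
\end{align*}
for every $\tilde s\in\mc S$.

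Next, I would multiply this recursion by $m(\tilde s)$ and sum over $\tilde s\in\mc S$. The Perron-Frobenius stationarity identity $\sum_{\tilde s\in\mc S}m(\tilde s)\sum_{a'\in\mc A}\pi(a'|\tilde s)Q(s'|\tilde s,a')=m(s')$ then cancels all the terms involving $\partial h^\pi_Q(s')/\partial \pi(a|s)$, leaving
\begin{align*}
\frac{\partial \Value{Q}{\pi}}{\partial \pi(a|s)}=m(s)\,\bias{\pi}{Q}(s,a)=\sum_{\tilde a\in\mc A}\mu_{\pi,Q}(s,\tilde a)\,\bias{\pi}{Q}(s,a),
\end{align*}
which is exactly the claimed formula.

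The main technical hurdle is making rigorous sense of the partial derivative $\partial \Value{Q}{\pi}/\partial \pi(a|s)$ in the presence of the simplex constraints $\sum_{a}\pi(a|\tilde s)=1$. The paragraph preceding the lemma addresses this by expressing $\Value{Q}{\pi}$ as a rational function of the state-action transition matrix $P$ from~\eqref{expr:P:from:pi:Q}, with denominator bounded away from zero on a neighborhood of~$\Pi_\epsilon$; the same representation, combined with a fundamental-matrix argument for $h^\pi_Q$, shows that $h^\pi_Q$ is likewise analytic on that neighborhood, so the differentiations above are well defined. A naive formal differentiation that treats the entries $\pi(a|\tilde s)$ as fully independent would produce an additional $m(s)\Value{Q}{\pi}$ term; this spurious contribution is precisely the one absorbed by the normalization of the Perron eigenvector that underlies the analytic extension, which is why the argument above routes through the identity $h^\pi_Q(\tilde s)=\sum_{a'}\pi(a'|\tilde s)\bias{\pi}{Q}(\tilde s,a')$ rather than the alternative form $\Value{Q}{\pi}+h^\pi_Q(\tilde s)=\sum_{a'}\pi(a'|\tilde s)[r(\tilde s,a')+\sum_{s'}Q(s'|\tilde s,a')h^\pi_Q(s')]$.
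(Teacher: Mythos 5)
Your derivation is correct in substance, but there is nothing in the paper to compare it against: the paper does not prove this lemma at all, it simply imports it as Lemma~7 of \citet{li2022stochastic}. Your sketch is the standard proof of the average-reward policy gradient theorem for direct (tabular) parametrization: differentiate the Poisson equation $\bias{\pi}{Q}(s,a)=r(s,a)-\Value{Q}{\pi}+\sum_{s'}Q(s'|s,a)h^\pi_Q(s')$, propagate the derivative through $h^\pi_Q(\tilde s)=\sum_{a'}\pi(a'|\tilde s)\bias{\pi}{Q}(\tilde s,a')$, and pair the resulting recursion with the stationary state distribution so that the $\partial h^\pi_Q/\partial\pi(a|s)$ terms telescope away by the invariance identity; the algebra you display is right and yields exactly $m(s)\bias{\pi}{Q}(s,a)$. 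The one place where your argument leans on something you assert rather than verify is the final paragraph: the claim that the gradient of the paper's analytic (Perron-eigenvector) extension of $\Value{Q}{\pi}$ coincides componentwise with the form obtained from $h^\pi_Q(\tilde s)=\sum_{a'}\pi(a'|\tilde s)\bias{\pi}{Q}(\tilde s,a')$ rather than with the alternative form (which differs by $m(s)\Value{Q}{\pi}$ times the all-ones direction in the $a$-coordinates, i.e., by a vector normal to the simplex constraint) is plausible but not established by your computation, since the Poisson equation you differentiate only holds identically on the affine hull of the simplex, not for the off-simplex extension. Strictly speaking your derivation pins down the directional derivatives of $\Value{Q}{\pi}$ along the tangent space of $\Pi_\epsilon$, which is all that Algorithm~\ref{alg:PG-min-oracle} ever uses after projection, so the residual ambiguity is harmless for the paper's purposes; if you wanted the literal componentwise identity you would have to check it against the explicit rational (fundamental-matrix) representation of $\mu_{\pi,Q}$.
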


Note that~$\nabla_\pi V^\pi_Q$ is Lipschitz continuous in~$\pi$ throughout~$\Pi_\epsilon$ thanks to \citep[Lemma~17]{li2021distributionally}. As~$\Pi_\epsilon$ is compact, this readily implies that~$V^\pi_Q$ is also Lipschitz continuous in~$\pi$ throughout~$\Pi_\epsilon$. By using a variant of  {\citep[Theorem~4.5]{li2023policy}}, one can thus show that Algorithm~\ref{alg:PG-min-oracle} converges to a global maximizer of the robust offline policy optimization problem in~\eqref{def:pistar}.

\begin{theorem}[Convergence of Algorithm~\ref{alg:PG-min-oracle}]\label{thm:PO:convergence}
Assume that $\Value{Q}{\pi}$ is $L$-Lipschitz and $\nabla_{\pi} \Value{Q}{\pi}$ is $\ell$-Lipschitz in~$\pi\in\Pi_\epsilon$ uniformly for all $Q\in \mathcal{Q}_\rho(\xi')$, and set the distribution mismatch coefficient~to
\[
    C=\max _{\pi, \pi^{\prime} \in \Pi_\epsilon} \max_{Q \in \mathcal{Q}_\rho(\xi')} \max_{s \in \mathcal{S}} \frac{\mu_{\pi,Q}(s)}{\mu_{\pi',Q}(s )}.
\] 
If $\delta=\frac{L}{2}\sqrt{2S/K}$ and $\eta=\frac{1}{L}\sqrt{2S/K}$, then the iterates $\pik$ of Algorithm~\ref{alg:PG-min-oracle} satisfy
\begin{align*}
\frac{1}{K}\sum_{k=0}^{K-1}\left( \Value{\rho}{\pik}(\xi')-\min_{\pi\in\Pi_\epsilon}\Value{\rho}{\pi}(\xi')\right)\le  \frac{(72S)^{1/4}(C\sqrt{2\ell L S}+\frac{L}{2}\sqrt{L/\ell})}{K^{1/4}}. 
\end{align*}
\end{theorem}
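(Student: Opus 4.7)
The plan is to adapt the analysis of Theorem~4.5 in \citet{li2023policy}, originally developed for robust discounted-cost MDPs, to the present average-reward setting. The argument has three ingredients: a per-iteration ascent inequality driven by $\ell$-smoothness of the inner function $\Value{Q}{\pi}$ in $\pi$; a performance-difference estimate that converts small projected-gradient step lengths into near-optimality of the outer problem, and into which the mismatch coefficient $C$ enters; and a balancing step that chooses $\eta$ and $\delta$ to optimize the rate in $K$.

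First, I would prove a per-iteration ascent inequality. Since $\Value{Q}{\pi}$ is $\ell$-smooth in $\pi$ on $\Pi_\epsilon$ uniformly in $Q \in \mc Q_\rho(\xi')$, the standard projected-gradient ascent lemma applied to the update in line~\ref{alg:PG-min-oracle:line:MD} gives, provided $\eta \le 1/\ell$,
$$\Value{\Pk}{\pikp} \ge \Value{\Pk}{\pik} + \tfrac{1}{2\eta}\|\pikp-\pik\|^2.$$
Combining this with the critic guarantee $\Value{\Pk}{\pik} \le \Value{\rho}{\pik}(\xi') + \delta$ and with $\Value{\rho}{\pikp}(\xi') \le \Value{\Pk}{\pikp}$ (since $\Pk$ is feasible in the inner problem at $\pikp$), and telescoping across $k=0,\dots,K-1$, yields
$$\sum_{k=0}^{K-1}\|\pikp-\pik\|^2 \le 2\eta\bigl(\Value{\rho}{\pi^{(K)}}(\xi')-\Value{\rho}{\pi^{(0)}}(\xi')\bigr) + 2\eta K\delta \le 4\eta L D + 2\eta K\delta,$$
where $D=\mathrm{diam}(\Pi_\epsilon)=O(\sqrt{S})$.

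Second, I would establish a performance-difference estimate against an optimal $\pi^\star$. For fixed $\Pk$, Lemma~\ref{lemma:policy:gradient:OPL} lets one express $\Value{\Pk}{\pi'}-\Value{\Pk}{\pik}$ as an expectation of the differential action-value $\bias{\pik}{\Pk}$ against the steady-state measure $\mu_{\pi',\Pk}$; a change of measure from $\mu_{\pi^\star,\Pk}$ to $\mu_{\pik,\Pk}$ introduces the mismatch coefficient $C$, and combining with the first-order optimality characterization of the projection step produces a bound of the form
$$\bigl|\Value{\rho}{\pi^\star}(\xi')-\Value{\rho}{\pik}(\xi')\bigr| \le \tfrac{C}{\eta}\|\pikp-\pik\| + \delta.$$
Averaging over $k$, applying Cauchy–Schwarz, and inserting the telescoping bound above yields
$$\frac{1}{K}\sum_{k=0}^{K-1}\bigl|\Value{\rho}{\pi^\star}(\xi')-\Value{\rho}{\pik}(\xi')\bigr| \le \tfrac{C}{\eta}\sqrt{\tfrac{4\eta L D + 2\eta K\delta}{K}} + \delta.$$
Plugging in $\eta=\tfrac{1}{L}\sqrt{2S/K}$ and $\delta=\tfrac{L}{2}\sqrt{2S/K}$ balances the two error terms, and after consolidating constants with $D=O(\sqrt{S})$ one recovers the claimed $(72S)^{1/4}(C\sqrt{2\ell LS}+\tfrac{L}{2}\sqrt{L/\ell})K^{-1/4}$ rate.

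The main obstacle is the performance-difference step. The discounted-cost argument of \citet{li2023policy} leans on the contractive Bellman operator and finite-horizon telescopes, neither of which is available for the long-run average-reward criterion. I would instead invoke the Poisson equation for $\bias{\pik}{\Pk}$ and exploit that the stationary distributions $\mu_{\pi^\star,\Pk}$ and $\mu_{\pik,\Pk}$ are bounded away from zero because $\pi^\star,\pik\in\Pi_\epsilon$ and $\Pk\in\mc Q_0$. This uniform lower bound keeps the change-of-measure ratio (and hence $C$) finite and forces the residual projection and bias-normalization terms to be absorbed into the smoothness constants $L,\ell$. Once this average-reward analogue of the gradient-dominance/performance-difference identity is in place, the remainder is a largely verbatim translation of the discounted-case algebra.
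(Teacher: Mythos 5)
Your overall strategy differs from the paper's. The paper's proof is short and operates entirely at the level of the nonsmooth outer objective $\Value{\rho}{\pi}(\xi')=\min_{Q\in\mc Q_\rho(\xi')}\Value{Q}{\pi}$: it introduces the Moreau envelope $\Phi_{1/(2\ell)}$ of this function and chains together \citep[Lemma~4.4]{li2023policy} (average suboptimality is controlled by $(C\sqrt{2S}+L/(2\ell))\,\|\nabla\Phi_{1/(2\ell)}(\pik)\|_{\mathbf F}$), Jensen's inequality, and \citep[Lemma~4.3]{li2023policy} (the standard inexact weakly-convex rate $\sum_k\|\nabla\Phi_{1/(2\ell)}(\pik)\|_{\mathbf F}^2=O(\ell L\sqrt{SK})$). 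Your proposal replaces this machinery with a direct sufficient-ascent/telescoping argument, and that is where it breaks.

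Concretely, your telescoping bound $\sum_{k}\|\pikp-\pik\|^2\le 2\eta\bigl(\Value{\rho}{\pi^{(K)}}(\xi')-\Value{\rho}{\pi^{(0)}}(\xi')\bigr)+2\eta K\delta$ does not follow from the three facts you list. The ascent lemma together with the critic guarantee gives $\Value{\Pk}{\pikp}\ge\Value{\rho}{\pik}(\xi')+\frac{1}{2\eta}\|\pikp-\pik\|^2$, but to telescope you need an upper bound of the form $\Value{\Pk}{\pikp}\le\Value{\rho}{\pikp}(\xi')+(\text{small})$, and the inequality you invoke, $\Value{\rho}{\pikp}(\xi')\le\Value{\Pk}{\pikp}$, points in the wrong direction: $\Pk$ is $\delta$-optimal for the inner minimization at $\pik$, not at $\pikp$. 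Patching this with $L$-Lipschitz continuity introduces an extra term $2L\|\pikp-\pik\|\le 2\eta L^2$ per iteration, which accumulates to $4\eta^2KL^2=8S$ under the prescribed step size and destroys the $K^{-1/4}$ rate. This is exactly the failure mode the Moreau-envelope analysis is designed to circumvent: a pointwise minimum of $\ell$-smooth functions is $\ell$-weakly concave but generally nonsmooth, so sufficient ascent of the surrogate $\Value{\Pk}{\cdot}$ does not transfer to $\Value{\rho}{\cdot}(\xi')$. In addition, your performance-difference inequality $\bigl|\Value{\rho}{\pi^\star}(\xi')-\Value{\rho}{\pik}(\xi')\bigr|\le\frac{C}{\eta}\|\pikp-\pik\|+\delta$ is asserted rather than derived; it is the crux of the whole argument (it is what \citep[Lemma~4.4]{li2023policy} supplies, phrased in terms of $\|\nabla\Phi_{1/(2\ell)}(\pik)\|_{\mathbf F}$ rather than the algorithm's gradient mapping $\|\pikp-\pik\|/\eta$, and these two stationarity measures do not coincide for a nonsmooth objective). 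Your closing remarks about the Poisson equation and uniform positivity of the stationary distributions address the discounted-to-average translation---which the paper sidesteps entirely by taking the $L$- and $\ell$-Lipschitz bounds as hypotheses---but not the nonsmoothness issue above.
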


Theorem~\ref{thm:PO:convergence} shows that computing a 
$\delta$-optimal solution for the robust offline policy optimization problem in~\eqref{def:pistar} requires at most~$K=\mc O(\delta^{-4})$ iterations. We point out that the convergence rate of Algorithm~\ref{alg:PG-min-oracle} is of the order $\mc O(K^{-1/4})$ whenever~$\eta=\mc O(K^{-1/2})$, and thus it is not necessary to know the Lipschitz constant~$L$ in practice. Setting $\eta=\frac{1}{L}\sqrt{2S/K}$ leads to theoretically optimal constants.


\begin{proof}{\textbf{of Theorem~\ref{thm:PO:convergence}}}
Note that $\mu_{\pi,Q}>0$ for every $\pi\in\Pi_\epsilon$ and $Q\in\mc Q_{\rho}(\xi')\subseteq\mc Q_0$ thanks to Lemma~\ref{lem:irreducibleity:pi:Q:X_t}. In addition, $\mu_{\pi,Q}$ is jointly continuous in~$\pi$ and~$Q$ by \citep[Lemma~17]{li2021distributionally}. Weierstrass' extreme value theorem thus implies that the maxima in the definition of~$C$ are attained such that $C$ is finite and strictly positive. Next, define the Moreau envelope $\Phi_\gamma: \mb R^{S\times A}\to\mb R$ of the distributionally robust value function $V_\rho^\pi$ corresponding to the smoothness parameter~$\gamma > 0$ by
\[
    \Phi_\gamma(\pi)=\min_{\pi'\in\Pi_\epsilon}V_{\rho}^{\pi'}(\xi')+\frac{1}{2\gamma}\|\pi'-\pi\|^2_{\mathbf F},
\]
where $\|\cdot\|_{\mathbf F}$ stands for the Frobenius norm. We then have
\begin{align*}
\frac{1}{K}\sum_{k=0}^{K-1}\left( \Value{\rho}{\pik}(\xi')-\min_{\pi\in\Pi_\epsilon}\Value{\rho}{\pi}(\xi')\right)
&\le \frac{(C\sqrt{2S}+L/(2\ell))}{K}\sum_{k=0}^{K-1}\|\nabla \Phi_{1 / (2 \ell)}(\pik)\|_{\mathbf F}
\\&\le\frac{(C\sqrt{2S}+L/(2\ell))}{\sqrt{K}} \sqrt{\sum_{k=0}^{K-1}\|\nabla \Phi_{1 / (2 \ell)}(\pik)\|_{\mathbf F}^2}
\\&\le \frac{(C\sqrt{2S}+L/(2\ell))(72S)^{1/4}(\ell L)^{1/2}}{K^{1/4}},
\end{align*}
where the first inequality follows from~\citep[Lemma~4.4]{li2023policy}, which applies because~$\Value{Q}{\pi}$ is $L$-Lipschitz and $\nabla_{\pi} \Value{Q}{\pi}$ is $\ell$-Lipschitz in~$\pi\in\Pi_\epsilon$ uniformly for all $Q\in \mathcal{Q}_\rho(\xi')$. The second and the third inequalities exploit Jensen's inequality and~\citep[Lemma~4.3]{li2023policy}, respectively.
\end{proof}

\subsubsection{Critic} \label{ssec:critic}
We now show that the critic's robust policy evaluation problem~\eqref{vstar:Q:formulation} can be solved approximately with a randomized policy gradient method that offers global convergence guarantees; see Algorithm~\ref{alg:PLD}. 

Throughout this section 
we assume that the uncertainty set admits a reparametrization of the form $\mc Q_\rho(\xi')=\{Q^\lambda : \lambda\in\Lambda\}$, where $\Lambda \subseteq \mb R^q$ is a solid parameter set, and~$Q^\lambda$ is an affine function. As~$\Lambda$ is solid, its linear span coincides with the ambient space~$\mb R^q$. A reparametrization of the uncertainty set with these properties and~$q=A(S-1)$ exists; see \citep[\S~5]{wiesemann2013robust}.

\begin{algorithm}[ht!]     \caption{Projected Langevin dynamics for solving the robust policy evaluation problem~\eqref{vstar:Q:formulation}}
  \begin{algorithmic}[1]
  \REQUIRE Iteration number $M\in\mb N$, step size $\eta >0$, Gibbs parameter $\beta>1$
  \STATE Initialize  $\lambda^{(0)}\in\Lambda$, $ m=0$
  \WHILE{$m\le M-1$}
  \STATE Sample $w_{m+1}\sim \mathcal{N}\left(0, I_q\right)$ 
    \STATE Find $\Ptp=\mathrm{Proj}_{\Lambda}\left(\Pt-\eta \left. \nabla_{\lambda} \Value{Q^{\lambda}}{\pi}\right|_{\lambda=\Pt} +\sqrt{2 \eta/\beta} w_{m+1}\right)$ \label{step:pld}
    \STATE $m \rightarrow m+1$
\ENDWHILE
\end{algorithmic}
\label{alg:PLD}
\end{algorithm}

In each iteration Algorithm~\ref{alg:PLD} applies a projected stochastic gradient step. The projection onto the convex set~$\Lambda$ is a standard operation that admits efficient implementations \citep{usmanova2021fast}. We now address the computation of the adversary's policy gradient. To this end, note that the gradient $\nabla_Q V^\pi_Q$ fails to exist because $V^\pi_Q$ was only defined on~$\Pi_0\times\mc Q_0$ and because the interior of~$\mc Q_0$ is empty. Using a similar reasoning as in Section~\ref{ssec:actor} and recalling that $\xi'\in\Xi_0$, however, $\Value{Q}{\pi}$ can be extended to an analytic function of~$Q$ on a neighborhood of~$\mc Q_\rho(\xi')$. In the following, we interpret $\nabla_Q V^\pi_Q$ as the gradient of this analytic extension of~$V^\pi_Q$. In order to derive a concise formula for the adversary's policy gradient, we introduce a differential action-next-state value function.
\begin{definition}[Differential action-next-state value function]
\label{def:diff-action-next-state}
For any fixed policy $\pi\in\Pi_\epsilon$ and transition kernel $Q\in\mc Q_0$, the 
differential action-next-state value function $J^\pi_Q:\mc S\times \mc A \times \mc S \to\mb R $ 
is defined through
\begin{align*}
J^{\pi}_{Q}(s,a,s')=\lim_{T\to\infty}\frac{1}{T}\sum_{t=1}^T\mathbb{E}_{\mb P_{\pi,Q}}\left[\sum_{\tau=1}^{t}\left(r(s_\tau, a_\tau)-V^\pi_Q\right) | s_1=s, a_1=a, s_2=s'\right].
\end{align*}
\end{definition}

One can show that the limit in Definition~\ref{def:diff-action-next-state} exists and is finite \citep[Section~8.2.1]{puterman2005markov}. The next lemma provides a formula for the adversary's policy gradient. Its proof widely parallels that of~\citep[Lemma 1]{li2023policy} and is thus omitted.

\begin{lemma}[Adversary's policy gradient] \label{lemma:policy:gradient:OPE}
For any $\pi\in\Pi_0$ and $\lambda\in\Lambda$, we have 
$$
\nabla_\lambda \Value{Q^\lambda}{\pi}=\sum_{s,s'\in\mc S}\sum_{a\in\mc A} \mu_{\pi,Q^\lambda}(s,a)J^\pi_{Q^\lambda}(s, a,s')\nabla_\lambda Q^\lambda(s'|s,a).$$
\end{lemma}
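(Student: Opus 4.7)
The plan is to mimic the classical policy gradient derivation, replacing the Bellman fixed-point equation that is used in the discounted setting (as in the proof of \citep[Lemma~1]{li2023policy}) by the Poisson equation for average-reward MDPs. First, by unravelling the Cesàro limits in Definitions~\ref{def:diff-action-value-function} and~\ref{def:diff-action-next-state} and invoking the Markov property, I would establish the two identities
\[
\bias{\pi}{Q^\lambda}(s,a) + \Value{Q^\lambda}{\pi} = r(s,a) + \sum_{s'\in\mc S} Q^\lambda(s'|s,a)\sum_{a'\in\mc A}\pi(a'|s')\,\bias{\pi}{Q^\lambda}(s',a')
\]
and
\[
J^\pi_{Q^\lambda}(s,a,s') = r(s,a) - \Value{Q^\lambda}{\pi} + \sum_{a'\in\mc A}\pi(a'|s')\,\bias{\pi}{Q^\lambda}(s',a'),
\]
which are standard for average-reward MDPs; see, \emph{e.g.}, \citep[\S~8.2]{puterman2005markov}.

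Next, I would differentiate the first identity in~$\lambda$, multiply both sides by $\mu_{\pi,Q^\lambda}(s,a)$, and sum over $(s,a)\in\mc S\times\mc A$. The stationarity relation
\[
\sum_{s\in\mc S,\,a\in\mc A}\mu_{\pi,Q^\lambda}(s,a)\,Q^\lambda(s'|s,a)\,\pi(a'|s') = \mu_{\pi,Q^\lambda}(s',a')
\]
causes the contributions of $\nabla_\lambda\bias{\pi}{Q^\lambda}$ on the two sides to cancel, leaving
\[
\nabla_\lambda \Value{Q^\lambda}{\pi} = \sum_{s,s'\in\mc S}\sum_{a\in\mc A}\mu_{\pi,Q^\lambda}(s,a)\,\nabla_\lambda Q^\lambda(s'|s,a)\sum_{a'\in\mc A}\pi(a'|s')\,\bias{\pi}{Q^\lambda}(s',a').
\]
To finish, observe that $\sum_{s'\in\mc S}Q^\lambda(s'|s,a)\equiv 1$ implies $\sum_{s'\in\mc S}\nabla_\lambda Q^\lambda(s'|s,a)=0$, so the $s'$-independent quantity $r(s,a)-\Value{Q^\lambda}{\pi}$ may be added inside the innermost bracket without changing the outer sum. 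The second Poisson identity then rewrites that bracket as $J^\pi_{Q^\lambda}(s,a,s')$, producing the claimed formula.

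The main obstacle is justifying that $\Value{Q^\lambda}{\pi}$ and $\bias{\pi}{Q^\lambda}$ are really differentiable in~$\lambda$, since the constraint that the state-action chain be irreducible restricts these functions to sets with empty interior in the ambient space of transition kernels. As remarked in Section~\ref{ssec:actor}, $\Value{Q}{\pi}$ extends to an analytic (indeed rational) function of~$Q$ on a neighborhood of $\mc Q_\rho(\xi')$ by virtue of \citep[Lemma~4]{li2021distributionally}, because any $\pi\in\Pi_0$ and $Q\in\mc Q_\rho(\xi')$ induce an irreducible state-action Markov chain (Lemma~\ref{lem:irreducibleity:pi:Q:X_t}). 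The same resolvent argument applied to the centered reward $r-\Value{Q}{\pi}$ yields an analogous analytic extension for $\bias{\pi}{Q}$, and composing with the affine map $\lambda\mapsto Q^\lambda$ transfers smoothness to~$\lambda$. With this, the manipulations above become fully rigorous, mirroring the corresponding step in the proof of \citep[Lemma~1]{li2023policy}.
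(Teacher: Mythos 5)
The paper omits this proof, stating only that it ``widely parallels'' that of \citep[Lemma~1]{li2023policy}; your argument---replacing the discounted Bellman fixed-point equation with the average-reward Poisson equation, cancelling the $\nabla_\lambda \bias{\pi}{Q^\lambda}$ terms via stationarity of $\mu_{\pi,Q^\lambda}$, and using $\sum_{s'}\nabla_\lambda Q^\lambda(s'|s,a)=0$ to swap $\bias{\pi}{Q^\lambda}$ for $J^\pi_{Q^\lambda}$---is exactly the adaptation the paper gestures at, and it is correct. Your handling of differentiability via the rational/analytic extension of $V^\pi_Q$ and of the bias function also matches the discussion the paper gives in Sections~\ref{ssec:actor} and~\ref{ssec:critic}.
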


By adapting {\citep[Theorem~3]{li2023policy}} to our setting, we can now show that Algorithm~\ref{alg:PLD} converges in expectation to a global minimizer of the robust policy evaluation problem in~\eqref{def:vstar:q}. 

\begin{theorem}[Convergence of Algorithm~\ref{alg:PLD}]
\label{thm:convergence:PLD}
    If $\delta>0$, $\eta<1/2$, $\pi\in\Pi_0$, and $\gamma\in (0,1)$, there exist universal constants $a>4$, $b>1$ and $c_1,c_2,c_3>0$ such that for all $\beta \geq c_1^{-1}\left(2 q/(c_1(1-\gamma) \delta e)\right)^{1 / \gamma}$ and $M\ge \max\{4,c_2\exp (c_3 q^b)/\delta^a\}$ the distribution~$\nu_M$ of the output  $\lambda^{(M)} $ of Algorithm~\ref{alg:PLD} satisfies
$$\mb E_{\lambda\sim\nu_M} [\Value{Q^\lambda}{\pi}]\le V_\rho^\pi(\xi')+\delta.$$
\end{theorem}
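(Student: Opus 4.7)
The plan is to adapt the blueprint of \citep[Theorem~3]{li2023policy} from discounted to average reward. I would decompose the sub-optimality as
\begin{align*}
\mb E_{\lambda\sim\nu_M}[\Value{Q^\lambda}{\pi}] - V_\rho^\pi(\xi')
= \underbrace{\mb E_{\lambda\sim\nu_M}[\Value{Q^\lambda}{\pi}] - \mb E_{\lambda\sim\nu_\beta}[\Value{Q^\lambda}{\pi}]}_{\text{optimization error}}
+ \underbrace{\mb E_{\lambda\sim\nu_\beta}[\Value{Q^\lambda}{\pi}] - V_\rho^\pi(\xi')}_{\text{concentration error}},
\end{align*}
where $\nu_\beta \propto \exp(-\beta \Value{Q^\lambda}{\pi})\mathsf 1_{\Lambda}$ denotes the Gibbs measure on $\Lambda$, and separately control each summand to $\delta/2$.

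As a preparation, I would verify the regularity conditions required of the objective on $\Lambda$. Building on the analytic extension argument of Section~\ref{ssec:critic} and on \citep[Lemma~17]{li2021distributionally}, the fact that $\xi'\in\Xi_0$ ensures that every $Q^\lambda$ with $\lambda\in\Lambda$ belongs to $\mc Q_0$, so the denominator of the rational map $\lambda\mapsto \Value{Q^\lambda}{\pi}$ is bounded away from zero on a neighborhood of $\Lambda$. Because $\lambda \mapsto Q^\lambda$ is affine and $\Value{Q}{\pi}$ is rational in~$Q$, this in turn delivers uniform Lipschitz and smoothness constants $L_\lambda,\ell_\lambda$ for $\Value{Q^\lambda}{\pi}$ and $\nabla_\lambda\Value{Q^\lambda}{\pi}$ across $\Lambda$, and shows that the objective is real-analytic (hence subanalytic) with well-defined sublevel-set geometry.

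For the optimization error, the iterates $\{\Pt\}_{m\ge 0}$ constitute the Euler--Maruyama discretization of the reflected Langevin SDE $d\lambda_t=-\nabla_\lambda\Value{Q^{\lambda_t}}{\pi}\,dt+\sqrt{2/\beta}\,dW_t+dk_t$, with boundary reflection $k_t$ in $\partial\Lambda$. Invoking the Wasserstein convergence of projected Langevin iterates \citep[Theorem~3.3]{bubeck2018sampling}, together with the Lipschitz and smoothness bounds above, gives $W_1(\nu_M,\nu_\beta)\le \delta/(2L_\lambda)$ for $M\ge c_2\exp(c_3q^b)/\delta^a$ and $\eta<1/2$, where $a,b$ and $c_2,c_3$ are universal. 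The Kantorovich--Rubinstein duality then converts this Wasserstein bound into the desired bound on the optimization error. For the concentration error, I would apply the subanalytic Gibbs concentration lemma \citep[Lemma~B.5]{li2023policy}: since $\lambda\mapsto \Value{Q^\lambda}{\pi}-V_\rho^\pi(\xi')$ is a non-negative subanalytic function on the solid set~$\Lambda\subseteq\mb R^q$, for every $\gamma\in(0,1)$ the Łojasiewicz-type volume estimate on its sublevel sets gives $\mb E_{\lambda\sim\nu_\beta}[\Value{Q^\lambda}{\pi}]-V_\rho^\pi(\xi')\le\delta/2$ once $\beta\ge c_1^{-1}(2q/(c_1(1-\gamma)\delta e))^{1/\gamma}$. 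Summing the two half-bounds yields the theorem.

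The main obstacle is not the Langevin step itself (which is a near-verbatim adaptation of \citep[Theorem~3]{li2023policy}) but the verification that the average-reward value function retains the geometric regularity properties of the discounted value function used there. Specifically, one must show that the rational representation $\Value{Q^\lambda}{\pi}=\mathrm{num}(\lambda)/\mathrm{den}(\lambda)$ has $\mathrm{den}(\lambda)$ uniformly bounded away from zero on $\Lambda$ (so that Lipschitz constants are finite) and that the resulting analytic function has a Łojasiewicz exponent controlled independently of the state-action space---facts that ultimately rest on the irreducibility guarantee $\mc Q_\rho(\xi')\subseteq\mc Q_0$ and the Perron--Frobenius analyticity of the stationary distribution $\mu_{\pi,Q}$. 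Once these reductions are in place, the remainder of the argument is essentially a transcription of \citep[proof of Theorem~3]{li2023policy}.
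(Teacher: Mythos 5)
Your proposal is correct and follows essentially the same route as the paper: verify that $\lambda\mapsto \Value{Q^\lambda}{\pi}$ and its gradient are Lipschitz on the solid convex body $\Lambda$ (the paper does this via \citep[Lemma~17]{li2021distributionally} and the chain rule through the affine map $\lambda\mapsto Q^\lambda$, you via the rational representation with denominator bounded away from zero on $\mc Q_\rho(\xi')\subseteq\mc Q_0$ -- the same underlying fact), and then invoke \citep[Theorem~3]{li2023policy}. The only difference is that you additionally unpack the internals of that cited theorem (the Gibbs-measure decomposition, Wasserstein convergence of the projected Langevin iterates, and the subanalytic concentration bound), which the paper simply cites as a black box.
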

\begin{proof}
Recall that $\Lambda$ is a solid convex body. In addition, note that $\nabla_Q\Value{Q}{\pi}$ is Lipschitz continuous in~$Q\in \mc Q_\rho(\xi')$ thanks to \citep[Lemma~17]{li2021distributionally}. As $Q_\lambda$ is affine in $\lambda$ and $\mc Q_\rho(\xi')=\{Q^\lambda : \lambda\in\Lambda\}$, we may then use the chain rule to deduce that $\nabla_\lambda\Value{Q^\lambda}{\pi}$ is Lipschitz continuous in~$\lambda\in\Lambda$. Thus, the claim follows directly from \citep[Theorem 3]{li2023policy}.
\end{proof}

Theorem~\ref{thm:convergence:PLD} implies that the number of iterations~$M$ required by Algorithm~\ref{alg:PLD} to compute a $\delta$-optimal solution for the robust policy evaluation problem~\eqref{def:vstar:q} grows exponentially with both the dimension~$q$ of the parameter~$\lambda$ and the number of accuracy digits~$\log(1/\delta)$. This curse of dimensionality is expected in view of the hardness result by~\citet[Theorem~1]{wiesemann2013robust}. However, Algorithm~\ref{alg:PLD} is conceptually simple and guarantees convergence to a global minimum, even though the uncertainty set fails to be rectangular. Moreover, by leveraging Markov's inequality, we can transform the convergence-in-expectation result from Theorem~\ref{thm:convergence:PLD} into a probabilistic bound.

\begin{corollary}[Probabilistic suboptimality guarantee]
If all assumptions of Theorem~\ref{thm:convergence:PLD} hold, then we have $\mb P_{\lambda\sim\nu_M}[ \Value{Q^{\lambda}}{
\pi}>V^\pi_\rho(\xi') - \delta/\gamma]\ge 1-\gamma$ for all $\gamma\in(0,1)$.
\end{corollary}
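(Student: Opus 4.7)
The plan is to apply Markov's inequality to the non-negative random variable $\Value{Q^{\lambda}}{\pi} - V^\pi_\rho(\xi')$, using the expected-value bound supplied by Theorem~\ref{thm:convergence:PLD} as the only nontrivial input.

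First I would verify that the shifted random variable is almost surely non-negative. The projection step in line~\ref{step:pld} of Algorithm~\ref{alg:PLD} guarantees $\Pt\in\Lambda$ for every~$m$, so $\nu_M$ is supported on~$\Lambda$. Combined with the reparametrization assumption $\mc Q_\rho(\xi')=\{Q^\lambda:\lambda\in\Lambda\}$, this yields $Q^\lambda\in\mc Q_\rho(\xi')$ for $\nu_M$-almost every~$\lambda$. Since Lemma~\ref{lemma:vstar:reformulation:Q} expresses $V^\pi_\rho(\xi')$ as the \emph{minimum} of $\Value{Q}{\pi}$ over $Q\in\mc Q_\rho(\xi')$, it follows that $X:=\Value{Q^{\lambda}}{\pi}-V^\pi_\rho(\xi')\ge 0$ with $\nu_M$-probability one.

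Second, Theorem~\ref{thm:convergence:PLD} gives $\mb E_{\lambda\sim\nu_M}[X]=\mb E_{\lambda\sim\nu_M}[\Value{Q^{\lambda}}{\pi}]-V^\pi_\rho(\xi')\le \delta$. Markov's inequality applied to the non-negative random variable~$X$ then yields
\[
\mb P_{\lambda\sim\nu_M}\!\left[X\ge \delta/\gamma\right]\;\le\;\frac{\gamma}{\delta}\,\mb E_{\lambda\sim\nu_M}[X]\;\le\;\gamma,
\]
so $\mb P_{\lambda\sim\nu_M}[X<\delta/\gamma]\ge 1-\gamma$. Rearranging (and noting that the stated inequality is in any case implied by the sharper two-sided bound $V^\pi_\rho(\xi')\le \Value{Q^{\lambda}}{\pi}<V^\pi_\rho(\xi')+\delta/\gamma$ that Markov's inequality delivers) gives the claim.

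There is no substantive obstacle: the entire argument is a single application of Markov's inequality to the convergence-in-expectation result of Theorem~\ref{thm:convergence:PLD}, and the only routine check is that the iterates of Algorithm~\ref{alg:PLD} stay in the parameter set~$\Lambda$, which is built into the projection step. Accordingly, I would present the proof in roughly three lines: (i) state non-negativity via Lemma~\ref{lemma:vstar:reformulation:Q}, (ii) invoke Theorem~\ref{thm:convergence:PLD} for the expectation bound, and (iii) apply Markov's inequality.
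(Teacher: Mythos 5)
Your proof is correct and takes essentially the same route as the paper, which only sketches the argument (``by leveraging Markov's inequality'') and, like you, applies Markov's inequality to the non-negative gap $\Value{Q^{\lambda}}{\pi}-V^\pi_\rho(\xi')$ together with the expectation bound of Theorem~\ref{thm:convergence:PLD} and the non-negativity coming from Lemma~\ref{lemma:vstar:reformulation:Q}. Your side remark is also well taken: Markov's inequality actually delivers the sharper bound $\mb P_{\lambda\sim\nu_M}[\Value{Q^{\lambda}}{\pi}<V^\pi_\rho(\xi')+\delta/\gamma]\ge 1-\gamma$, of which the inequality as printed (with ``$>V^\pi_\rho(\xi')-\delta/\gamma$'') is a trivially true consequence, so the sign in the stated corollary appears to be a typo.
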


\section{Numerical Results}\label{sec:numerical:experiments}

We now assess the out-of-sample properties of the proposed distributionally robust value and policy estimators in two numerical experiments. The first experiment revolves around a stochastic GridWorld system and tests the distributionally robust value estimator from Section~\ref{ssec:OPE:dist:shift}. The second experiment tests the distributionally robust policy estimator from Section~\ref{ssec:OPL:dist:shift} in the context of a standard machine replacement problem. All experiments are implemented in Python, and the code for reproducing all numerical results is available from \url{https://github.com/mengmenglior/offline-rl}.

\subsection{Off-Policy Evaluation: GridWorld System}

The first experiment is built around a GridWorld problem ubiquitous in reinforcement learning \citep{sutton2018reinforcement}. The state space $\mc S$ consists of the $25$ cells of a $5\times 5$ grid, and the action space~$\mc A=\{0,1,2,3\}$ includes the $4$ directions ``up,'' ``down,'' ``left'' and ``right.'' An agent aims to reach the Goal State in cell~$1$ (top left) while avoiding the Bad State in cell~$25$ (bottom right). Selecting action $a\in\mc A$ in state $s\in\mc S$ moves the agent to $s'\in\mc S$ with probability $Q(s'|s,a)$. The agent receives a reward of $0$ in the Goal State, $-5$ in the Bad State, and $-1.5$ elsewhere. The initial state~$s_0$ follows the uniform distribution on~$\mc S$, which we denote by~$\rho$. The transition probabilities are defined as follows. Let $\mc S(s)\subseteq \mc S$ be the set of cells adjacent to~$s$. If~$s'$ is the adjacent cell in direction~$a$, then $Q(s'|s,a)=0.7$; if $s'$ is any other adjacent cell, $Q(s'|s,a)=0.1$; if $s'=s$, $Q(s'|s,a)=1-\sum_{s''\in \mc S(s)}Q(s''|s,a)$; otherwise, $Q(s'|s,a)=0$. These rules apply even if no adjacent cell exists in direction~$a$. The agent observes a state-action trajectory $\{ (S_t,A_t)\}_{t=1}^T$ generated under the behavioral policy $\pib \in \Pi_0$, which selects each action $a\in\mc A$ with probability $0.9\times 0.1^a/(1-0.1^4)$, and leverages this data to estimate the average reward~$V(f_\pi(\xi_0))$ of the evaluation policy $\pie\in\Pi_0$, which selects each action $a\in\mathcal A$ with probability~$A^{-1}$. Thus, $\pi_0$ resembles a geometric distribution on the action space, and the density ratio $\pi(a|s)/\pi_0(a|s)$ increases rapidly with~$a$, which poses a well-known challenge in off-policy evaluation literature~\citep{liu2018breaking}.

We compare our distributionally robust value estimator $V_\rho^\pi(\qhat)$ against the marginalized importance sampling estimator~$\widehat V^\pi_{\rm MIS}(\qhat)$ by~\citet{liu2018breaking}, which is known to have low variance, and against the distributionally robust estimator~$\widehat V^\pi_{\rm OT}(\qhat)$ by \citet{wang2020reliable}, which uses an optimal transport uncertainty set. As this estimator is tailored to MDPs with a discounted cost criterion, we set the corresponding discount factor to~$\gamma=85\%$ and normalize the estimator by~$(1-\gamma)^{-1}$ to approximate the long-run average cost of~$\pi$ as proposed by~\citet{tsitsiklis2002average}.

The goal of the first experiment is to empirically validate the statistical optimality of $V_\rho^\pi(\qhat)$. To this end, we fix a sample size~$T\in\{500, 1{,}000, 2{,}000\}$ and sample~$20$ independent state-action trajectories of length~$T$ from the Markov chain induced by the transition kernel~$Q$ and the behavioral policy~$\pi_0$. For each trajectory, we then compute the three distinct estimators. The empirical out-of-sample disappointment~$\widehat\beta$ of any estimator is defined as the percentage of the trajectories for which the estimator falls below the true expected long-run average reward~$V(f_\pi(\xi_0))$ of the evaluation policy~$\pi$. The out-of-sample disappointment of the two distributionally robust estimators can be tuned by changing the radii of the underlying uncertainty sets, while that of the marginalized importance sampling estimator can be tuned by applying an additive offset. All hyperparameter values that were tested in the first experiment are reported in Table~\ref{tab:params}.

\begin{table}[h!]
\centering
\caption{Tested hyperparameter values (uncertainty radii of the two distributionally robust estimators and additive offsets of the marginalized importance sampling estimator), where $\mc K=\{0,\ldots,9\}$}
{\small
\begin{tabular}{r|r|r|r}
\toprule
$T$ & Uncertainty radii of $V_\rho^\pi(\qhat)$ & Uncertainty radii of $\widehat V^\pi_{\rm OT}(\qhat)$ & Offsets of $\widehat V^\pi_{\rm MIS}(\qhat)$ \\
\midrule
$500$    & $\{0.01-1.111\times10^{-3}k: k\in\mc K\}$ & $\{5-0.44k: k\in\mc K\}$    & $\{0.2-0.02k: k\in\mc K\}$ \\
$1{,}000$ & $ \{0.01-1.111\times10^{-3}k:k\in\mc K\}$ & $\{2.5-0.17k:k\in\mc K\}$    & $\{0.15-0.011k:k\in\mc K\}$ \\
$2{,}000$ & $\{0.01 -1.111\times10^{-3}k:k\in\mc K\}$ & $\{2.5-0.17k:k\in\mc K\}$    & $\{0.15-0.016k:k\in\mc K\}$ \\
\bottomrule
\end{tabular}}
\label{tab:params}
\end{table}

Theorem~\ref{thm:pareto:vstar} ensures that the distributionally robust value estimator $V_\rho^\pi(\qhat)$ is the least conservative ({\em i.e.}, largest) of all estimators with a prescribed decay rate of the out-of-sample disappointment. For large finite values of~$T$ and for any choices of the hyperparameters that induce the same empirical out-of-sample disappointment, we thus expect our distributionally robust value estimator to exceed the two baseline estimators. This conjecture is supported by the numerical results in Figure~\ref{fig:OPE-plot}, which shows that our estimator Pareto dominates the two baselines in that it predicts the highest rewards for any given upper bound on the out-of-sample disappointment. The first experiment thus complements the theoretical analysis in Section~\ref{ssec:OPE:dist:shift} by showing that the statistical efficiency of the proposed estimator persists even for (practically relevant) finite sample sizes and fixed out-of-sample disappointment levels. 
We also emphasize that, while the two benchmark estimators require explicit knowledge of the behavioral policy~$\pi_0$, the proposed distributionally robust estimator can be evaluated without this knowledge.

\begin{figure}[h!]
    \centering
    \scalebox{0.9}{\begin{tikzpicture}

\begin{groupplot}[group style = {group size = 3 by 1, horizontal sep = 35}, width=2.5in,height=2in]

\nextgroupplot[
title={$T=500$},
xlabel={\(\widehat \beta\)},
xmin=-0.05,
xmax=0.7,
ymin=-2.2,
ymax=-1.55,
legend style = { column sep = 5pt, legend columns=-1, legend to name = groupa,}
]
    \addplot[only marks, tblue, mark=pentagon*,mark size=3pt] coordinates {
    (0.0, -1.6793291765168572)
(0.0, -1.6592282165770373)
(0.0, -1.6598238434468822)
(0.0, -1.6684882087634674)
(0.0, -1.6717336272935668)
(0.0, -1.6740053061712978)
(0.05, -1.662402540725568)
(0.0,-1.6632150683801457)
(0.0, -1.6806782910283602)
(0.05, -1.678829225639673)
    };
    \addlegendentry{$V^\pi_\rho(\qhat)$}
    
    \addplot[only marks, orange, mark=triangle*, mark size=3pt] coordinates {
    (0.05, -1.831025109435635)
(0.0, -1.7667784096417833)
(0.05, -1.7292386878530164)
(0.05, -1.7558456461952718)
(0.1, -1.7188412696264195)
(0.05, -1.7107912204997295)
(0.25, -1.6983549250308312)
(0.35, -1.6664278975394697)
(0.65, -1.6867433530862077)
(0.5, -1.641595319759397)
    };
    \addlegendentry{$\widehat{V}^\pi_{\rm MIS}(\qhat)+\Delta$}
    
    \addplot[only marks, fgreen, mark=diamond*, mark size=3pt] coordinates {
    (0.0, -2.144487059379989)
(0.05, -2.247086773018734)
(0.15, -2.130047393864639)
(0.15, -2.087149432271644)
(0.05, -2.080539427937026)
(0.0, -2.1356525825960064)
(0.1, -2.145492079143671)
(0.05, -2.154952766495558)
(0.0, -2.029628712356864)
(0.0, -2.169288345634899)
    };
    \addlegendentry{$\widehat{V}^\pi_{\rm OT}(\qhat)$}

\nextgroupplot[
title={$T=1{,}000$},
xlabel={\(\widehat \beta\)},
xmin=-0.05,
xmax=0.7,
ymin=-2.2,
ymax=-1.55,
]

    \addplot[only marks, tblue, mark=pentagon*,mark size=3pt, forget plot] coordinates {
      (0, -1.66208011)
      (0.0 , -1.6770026121556108)
(0.0 , -1.6979139900888143)
(0.0, -1.6897820087626265)
(0.0 , -1.6610096970045647)
(0.0 , -1.664708458652371)
(0.0 , -1.667075998044755)
(0.1 , -1.6776264255244808)
(0.05 , -1.6572088108895433)
(0.0 , -1.6723923527522029)
    };
    
    \addplot[only marks, orange, mark=triangle*, mark size=3pt, forget plot] coordinates {
      (0.35, -1.66402819)
      (0.0 , -1.836003816485618)
(0.0 , -1.799456208130301)
(0.05 , -1.8642684526598021)
(0.1 , -1.7559264575349527)
(0.1 , -1.7782729878636032)
(0.15 , -1.7469927672317027)
(0.1 , -1.7413430813603532)
(0.4 , -1.704681440455691)
(0.25 , -1.689524414466216)
    };
    
    \addplot[only marks, fgreen, mark=diamond*, mark size=3pt, forget plot] coordinates {
      (0, -1.9708738)
      (0.2 , -1.7615386854562118)
(0.15 , -1.8362728287768082)
(0.15 , -1.8854562758402431)
(0.25 , -1.687509351780097)
(0.15 , -1.8947242013004917)
(0.15 , -1.7458508233844487)
(0.1 , -1.9034083466559948)
(0.25 , -1.7783404377165077)
(0.05 , -1.8621828268424827)
    };
    
\nextgroupplot[
title={$T=2{,}000$},
xlabel={\(\widehat \beta\)},
xmin=-0.05,
xmax=0.7,
ymin=-2.2,
ymax=-1.55,
]              
    
    \addplot[only marks, tblue, mark=pentagon*,mark size=3pt, forget plot] coordinates {
    (0, -1.66240006)
    (0, -1.6951574)
    (0, -1.65109963)
    (0, -1.67557959)
    (0, -1.68483436)
    (0, -1.67002794)
    (0, -1.69545889)
    (0, -1.67668793)
    (0.05, -1.65357863)
    (0.05, -1.66347218)
    };
    
    \addplot[only marks, orange, mark=triangle*, mark size=3pt, forget plot] coordinates {
    (0.0, -1.7573594208466663)
(0.0, -1.8125978720269251)
(0.0, -1.7648925768258175)
(0.0, -1.7217588362626077)
(0.05, -1.7200925493553108)
(0.15, -1.75399960421312)
(0.3, -1.7291244822744507)
(0.1, -1.7468593651051598)
(0.3, -1.6596874778425843)
(0.2, -1.665798495683066)
    };
    
    \addplot[only marks, fgreen, mark=diamond*, mark size=3pt, forget plot] coordinates {
    (0.0, -1.9444780314297077)
(0.05, -1.9488549323301307)
(0.0, -1.8409822784464438)
(0.0, -1.8578202554092957)
(0.2, -1.7924294986799336)
(0.05, -1.8727554527745767)
(0.05, -1.8462306421848447)
(0.1, -1.8191896870786421)
(0.05, -1.863027953497489)
(0.05, -1.8306339624006653)
    };
\end{groupplot}
\node at ($(group c2r1) + (0,-3.3cm)$) {\ref{groupa}}; 

\end{tikzpicture}}    
    \caption{Scatter plot of the average reward predicted by different estimators against the empirical out-of-sample disappointment $\widehat\beta$. Points correspond to different hyperparameter values from Table~\ref{tab:params}.}
    \label{fig:OPE-plot}
\end{figure}
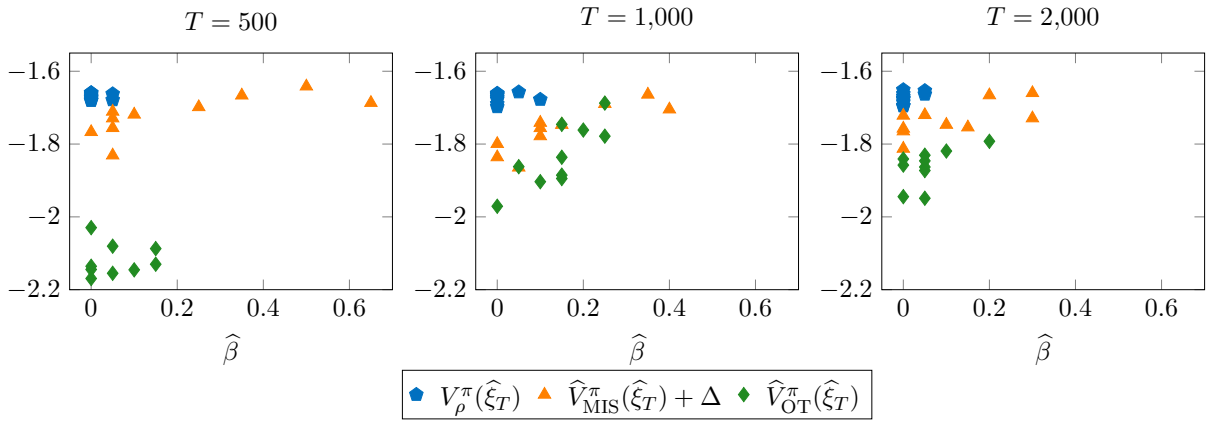


\subsection{Offline Policy Optimization: Machine Replacement}

The second experiment is based on the machine replacement problem described in~\citep{wiesemann2013robust}. The objective is to determine an optimal repair strategy for a machine whose condition is represented by eight ``operative'' states, labeled \(1, \dots, 8\), and two ``repair'' states, labeled~R1 and~R2. The available actions are either ``do nothing'' or ``repair.'' There is no reward in any of the operative states, while states~8, R1, and~R2 yield rewards of $-20$, $-2$, and $-10$ per time period, respectively. The initial state \(s_0\) is governed by the uniform distribution over \(\mathcal{S}\), and the transition kernel \(Q\) is defined as in~\cite[\S~6]{wiesemann2013robust}. 
Details are omitted for brevity.

In this experiment we compare the out-of-sample performance of our distributionally robust policy estimator~$\pi_\rho(\qhat)$ against that of two benchmark estimators. The first benchmark, denoted by~$\widehat{\pi}_{\rm KL}(\qhat)$, is obtained from a robust offline reinforcement learning problem with an $(s,a)$-rectangular uncertainty set defined using the Kullback-Leibler divergence \citep[\S~4]{shi2022distributionally}. This estimator enjoys near-optimal sample complexity. The second benchmark, denoted by~$\widehat{\pi}_{\rm PI}(\qhat)$, is based on a non-robust, model-based (``plug-in'') approach, which achieves minimax-optimal sample complexity \citep{li2024settling}. Both baseline estimators are designed to maximize discounted reward. Since the average reward of a policy is the leading term in the expansion of the discounted reward as the discount factor approaches~$1$, a policy that is optimal for large 
discount factors must be nearly optimal for the average reward criterion \citep[\S~10.1.2]{puterman2005markov}. We thus set the discount factor to~$0.95$ when computing the baseline estimators. We also emphasize that both baseline estimators require access to independent samples from the stationary state-action-next-state distribution. 
In contrast, the proposed distributionally robust policy estimator only requires access to one single state-action trajectory generated under an unknown behavioral policy. To ensure a fair comparison, we construct the baseline estimators from the~$T-1$ (dependent) state-action-next-state triples in the state-action trajectory of length~$T$ that is made available to all estimators. Finally, we set the radii of the uncertainty sets of the distributionally robust estimators $\widehat{\pi}_{\rm KL}(\qhat)$ and $\widehat{\pi}_{\rm PI}(\qhat)$ to~$4.5/T$. This scaling is recommended by \citet{duchi2021statistics} for $\widehat{\pi}_{\rm KL}(\qhat)$ and ensures that the out-of-sample disappointment of~$\pi_\rho(\qhat)$ remains approximately constant at $\beta\approx 1\%$ (see Theorem~\ref{thm:gen:bound:pi}).

We assume now that the behavioral policy $\pib \in \Pi_0$ selects each action $a\in\mc A$ with probability~$1/A$ irrespective of the current state. For any fixed sample size~$T$, we first generate a state-action trajectory of length~$T$ from the Markov chain induced by~$\pi_0$ and~$Q$.
For each of these trajectories, we then construct the three policy estimators and compute their true long-run average rewards. Finally, we record the frequency with which each estimator achieves the highest reward across the~$100$ simulation runs; see Figure~\ref{fig:oos-policy-opt}. We observe that our distributionally robust policy estimator wins most often for all sample sizes~$T \lesssim 400$. For larger sample sizes, the minimax-optimal plug-in estimator dominates. 
The robust policy estimator $\widehat{\pi}_{\rm KL}(\qhat)$ displays a similar performance as~$\pi_\rho(\qhat)$ for small values of~$T$. 
We believe that the performance of~$\pi_\rho(\qhat)$ deteriorates in this regime because of Algorithm~\ref{alg:PLD}, which outputs suboptimal reward estimates when the uncertainty set is large.

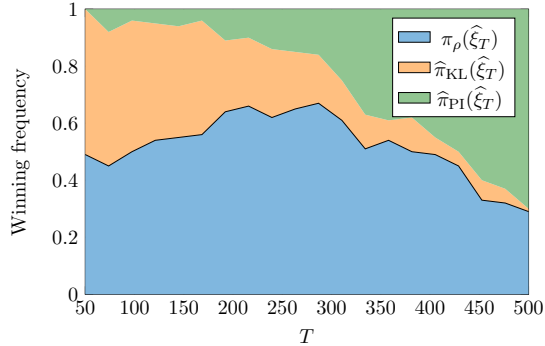
\begin{figure}[htb!]
    \centering
    \scalebox{.7}{\begin{tikzpicture}
\begin{axis}[
    width=10cm,
    height=7cm,
    xlabel={$T$},
    ylabel={Winning frequency},
    legend pos=north east,
    xmin=50, xmax=500,
    ymin=0, ymax=1,
    xtick={50,100,150,200,250,300,350,400,450,500},
    ytick={0,0.2,0.4,0.6,0.8,1.0},
  ]
    \path[name path=baseline] (axis cs:50,0) -- (axis cs:500,0);
    
        \addplot[name path=ours,  forget plot] coordinates {
      (50,0.49)
      (73.68,0.45)
      (97.37,0.5)
      (121.05,0.54)
      (144.74,0.55)
      (168.42,0.56)
      (192.11,0.64)
      (215.79,0.66)
      (239.47,0.62)
      (263.16,0.65)
      (286.84,0.67)
      (310.53,0.61)
      (334.21,0.51)
      (357.89,0.54)
      (381.58,0.5)
      (405.26,0.49)
      (428.95,0.45)
      (452.63,0.33)
      (476.32,0.32)
      (500,0.29)
    };
    
    \addplot[name path=cum1, draw=none, forget plot] coordinates {
      (50,1.0)        
      (73.68,0.92)    
      (97.37,0.96)    
      (121.05,0.95)   
      (144.74,0.94)   
      (168.42,0.96)   
      (192.11,0.89)   
      (215.79,0.90)   
      (239.47,0.86)   
      (263.16,0.85)   
      (286.84,0.84)   
      (310.53,0.75)   
      (334.21,0.63)   
      (357.89,0.61)   
      (381.58,0.62)   
      (405.26,0.55)   
      (428.95,0.50)   
      (452.63,0.40)   
      (476.32,0.37)   
      (500,0.30)      
    };
    
    \addplot[name path=cum2, draw=none, forget plot] coordinates {
      (50,1.0)
      (73.68,1.0)
      (97.37,1.0)
      (121.05,1.0)
      (144.74,1.0)
      (168.42,1.0)
      (192.11,1.0)
      (215.79,1.0)
      (239.47,1.0)
      (263.16,1.0)
      (286.84,1.0)
      (310.53,1.0)
      (334.21,1.0)
      (357.89,1.0)
      (381.58,1.0)
      (405.26,1.0)
      (428.95,1.0)
      (452.63,1.0)
      (476.32,1.0)
      (500,1.0)
    };

\addplot[tblue!50,forget plot] fill between[of=baseline and ours];

\addplot[orange!50,forget plot] fill between[of=ours and cum1];

\addplot[fgreen!50,forget plot] fill between[of=cum1 and cum2];

\addlegendimage{area legend, fill=tblue!50}
\addlegendentry{$\pi_\rho(\qhat)$}

\addlegendimage{area legend, fill=orange!50}
\addlegendentry{$\widehat{\pi}_{\rm KL}(\qhat)$}

\addlegendimage{area legend, fill=fgreen!50}
\addlegendentry{$\widehat{\pi}_{\rm PI}(\qhat)$}
\end{axis}
\end{tikzpicture}}
    \caption{Frequencies at which each of the three policy estimators achieves the highest long-run average reward across $100$ independent simulation runs, as a function of $T$.}
    \label{fig:oos-policy-opt}
\end{figure}

Figure~\ref{fig:oos-policy-opt} does not change substantially even if the baseline estimators are given unfair access to independent samples from the {\em true} stationary state-action-next-state distribution (not shown).

\section*{Acknowledgement}
This work was supported as a part of the NCCR Automation, a National Center of Competence in Research, funded by the Swiss National Science Foundation (grant number 51NF40\_225155).

\bibliography{names}


\end{document}